\newtheorem{theorem}{Theorem}[section]
\newtheorem{proposition}[theorem]{Proposition}
\theoremstyle{definition}
\newtheorem{example}[theorem]{Example}
\theoremstyle{remark}
\begin{document}
\onehalfspacing
%
\title{Infinite integrals in terms of series}


\author{Robert Reynolds}
\address[Robert Reynolds]{Department of Mathematics and Statistics, York University, Toronto, ON, Canada, M3J1P3}
\email[Corresponding author]{milver73@gmail.com}
\thanks{}


\subjclass[2020]{Primary  30E20, 33-01, 33-03, 33-04}

\keywords{Cauchy contour integral, generalized infinite integral, Hurwitz-Lerch zeta function, logarithm function, singularity, Cauchy principal value, special functions, Stirling numbers of first and second kind, finite series, $q$-Pochhammer symbol, Pochhammer symbol, Ramanujan integral, Schr\"{o}der's integral formula, Gregory coefficients, Schr\"{o}der factor}

\date{}

\dedicatory{}

\begin{abstract}
In this work we derive and evaluate some infinite integrals involving the product of a generalized logarithm, polynomial, and special functions in the denominator. These integrals are expressed in terms of finite series involving the Hurwitz-Lerch zeta function $\Phi(z,s,a)$. We produce special cases of these integrals in terms of other special functions and fundamental constants.
\end{abstract}

\maketitle
\section{Significance Statement}
Definite integrals over the positive real axis with generalized logarithmic and rational and special functions are used by researchers in science and engineering. Some of these used cases are in the calculation of areas, probabilities and the evaluation of physical quantities. These integrals also aid in the study of singularities and the analysis of discontinuous functions over infinite intervals. These integrals are used in the modelling of probability distributions, electric fields and gravitational and nuclear forces. These types of integrals also play a key role in the analysis of convergence theory by assisting with the study of limits and series. Generalized integrals can also be used to derive transformations involving special functions with arguments involving basic arithmetic operations. Infinite integrals also form the basis of well known transforms such as; the Abel transform [Wolfram MathWorld, \href{https://mathworld.wolfram.com/AbelTransform.html}{(1)}], Aboodh transform [Aruldoss et al. \href{https://www.ripublication.com/gjpam20/gjpamv16n2_01.pdf}{(2)}], Bateman transform [Bateman, \href{https://doi.org/10.1112/plms/s2-1.1.451}{(1)}], Fourier transform [Wolfram MathWorld, \href{https://mathworld.wolfram.com/FourierTransform.html}{(1)}], Short-time Fourier transform [Hosseini Giv,\href{https://doi.org/10.1016/j.jmaa.2012.09.053}{(1.2)}], Gabor transform [Debnath et al.,\href{https://doi.org/10.1007/978-0-8176-8418-1_4}{pp 243-286}], Hankel transform [Wolfram MathWorld,\href{https://mathworld.wolfram.com/HankelTransform.html}{(2)}], Hartley transform [Wolfram MathWorld,\href{https://mathworld.wolfram.com/HartleyTransform.html}{(2)}], Hermite transform [,\href{https://doi.org/10.1112/jlms/s1-13.1.22}{pp 22-29}], Hilbert transform [,\href{https://doi.org/10.1007/BF01171098}{pp 218-224}], Jacobi transform [DLMF ,\href{https://dlmf.nist.gov/15.9.E12}{(15.9.12)}], Laguerre transform [McCully, \href{https://doi.org/10.1137/1002040}{(4)}], Laplace transform [Wolfram MathWorld, \href{https://mathworld.wolfram.com/LaplaceTransform.html}{(1)}], Mellin transform [Wolfram MathWorld, \href{https://mathworld.wolfram.com/MellinTransform.html}{(1)}], Radon transform [Wolfram MathWorld, \href{https://mathworld.wolfram.com/RadonTransform.html}{(1)}], Stieltjes transformation [Y\"{u}rekli,\href{https://doi.org/10.1016/0022-247X(92)90189-K}{(1)}], Wavelet transform [Nakayama, \href{https://doi.org/10.1007/b138494_14}{(1)}], and Weierstrass transform [Venugopal,\href{https://doi.org/10.1006/jmaa.1997.5836}{(1)}] to name a few. In the late 19th century, German mathematician Ernst Schr\"{o}der \cite{schroder_dipert}, contributed to the theory of special sequences through his work on integral representations.  He explored the connections between infinite series and definite integrals, particularly focusing on the coefficients that arise in the expansions of logarithmic functions. These coefficients, now known as Gregory coefficients [Wolfram MathWorld, \href{https://mathworld.wolfram.com/BernoulliNumberoftheSecondKind.html}{(2)}], had already appeared in numerical analysis and interpolation formulas introduced by James Gregory in the 17th century.  They serve as correction terms in quadrature rules and are closely related to Bernoulli numbers. Schr\"{o}der expressed these coefficients using an improper integral involving a logarithmic kernel. This approach connected discrete sequences to continuous analytic structures, reflecting the influence of complex analysis and contour integration techniques of the time. His formula offered a new perspective between series expansions, special functions, and integral transforms, which prove useful in analytic number theory and approximation methods. In this work we further previous research into generalized kernels and higher-order logarithmic terms. These new developments involve polylogarithms, Hurwitz-Lerch zeta functions, and Mellin transforms.
\section{Introduction}
In this work definite integrals over $\mathbb_{R_{+}}$ involving products of generalized logarithm and rational functions are studied. The logarithmic function has a complex number order and raised to a complex number $\log^k(ax)$. The rational functions have the form $\frac{1}{(ax^v+b)^{n+1}}$ and $\frac{1}{(ax^v+b)_{n+1}}$ where $a,b,v \in\mathbb{C}$ and $n$ is any positive integer. The definite integral is expressed in terms of the finite sum involving the Hurwitz-Lerch zeta function [DLMF, \href{https://dlmf.nist.gov/25.14}{25.14.1}]. This generalized type of definite integral makes its special case forms easy to study. Such special cases were previously studied and tabled by Bierens de Haan \cite{bdh}, Malmsten \cite{malmsten}, Gr\"{o}bner \cite{grobner}, Prudnikov \cite{prud1}, Erdeyli et al.\cite{erd_t1,erd_t2}, Oberhettinger \cite{ober_f,ober_m}, Brychkov et al. \cite{brychkov_m}, Moll \cite{moll_1,moll_2,moll_3}, Adamchik [Adamcik,\href{https://dl.acm.org/doi/pdf/10.1145/258726.258736}{(1)}], Blagouchine \cite{blagouchine}, Ripon et al \cite{ripon}, Reynolds and Stauffer \cite{reyn5} to name a few.\\\\
Infinite integrals over $\mathbb{R_{+}}$ of generalized logarithm and polynomial functions are  listed by Bierens de Haan in Table (135) in \cite{bdh}. In his Table (135), entries (7) to (13) involve the product of a logarithm function and a generalized quotient polynomial function expressed in terms of trigonometric functions and fundamental constants such as $\pi$ . In the book of Prudnikov \cite{prud1}, Table (2.6.4) entries (6) to (16) involve the Mellin transform of the product of a generalized logarithm function and a generalized polynomial function. These integrals are expressed in terms of the $n$-th derivative of the Beta function [Wolfram MathWorld, \href{https://mathworld.wolfram.com/BetaFunction.html}{(1)}], the polygamma function [Wolfram MathWorld, \href{https://mathworld.wolfram.com/PolygammaFunction.html}{(1)}], the finite sum of quotient linear functions, trigonometric functions, infinite series involving the Pochhammer symbol and reciprocal polynomials, Euler numbers, [Wolfram MathWorld, \href{https://mathworld.wolfram.com/EulerNumber.html}{(1)}]. In \cite{prud1}, Table (2.6.5) entries (14) to (24) involve the product a generalized logarithm function and a quotient linear and quadratic polynomial function expressed in terms the polylogarithm function [Wolfram MathWorld, \href{https://mathworld.wolfram.com/Polylogarithm.html}{(1)}], trigonometric functions, and fundamental constants. Table (2.6.8) entries (9) to (18) lists integrals involving the product of generalized logarithm function and quotient rational functions involving square roots of polynomial functions expressed in terms of Catalan's constant [Wolfram MathWorld, \href{https://mathworld.wolfram.com/CatalansConstant.html}{(1)}], polygamma function, complete Elliptic integral of the first kind, [Wolfram MathWorld, \href{https://mathworld.wolfram.com/CompleteEllipticIntegraloftheFirstKind.html}{(1)}], and the $n$-th partial derivative of the ratio of gamma functions [Wolfram MathWorld, \href{https://mathworld.wolfram.com/GammaFunction.html}{(1)}].
\section{Preliminaries}
This section provides a list of the formulae, special functions, and Cauchy contour integral representations used in this work. Cauchy’s integral formula is a fundamental theorem in complex analysis. It provides a method to find the value of a holomorphic (analytic) function at any point inside a simple closed curve, using only the function’s values on the curve’s boundary \cite{ahlfors}. Special functions are a class of mathematical functions with established names and notations because of their importance and frequent occurrence in fields like mathematical analysis, physics, engineering, and other applied sciences.  They often arise as solutions to differential equations or integrals of elementary functions [DLMF, \href{https://dlmf.nist.gov/}{DLMF}]. 

\begin{proposition}
The polylogarithm function [MathWorks, \href{https://www.mathworks.com/help/symbolic/sym.polylog.html}{1}];
\begin{multline}\label{eq:prelim_1}
\text{Li}_s(z)\\=(2 \pi )^{s-1} \Gamma (1-s) \left(i^{1-s} \zeta \left(1-s,\frac{1}{2}-\frac{i (\log (z)+i \pi )}{2 \pi }\right)\right. \\ \left.
+i^{s-1} \zeta \left(1-s,\frac{i (\log (z)+i \pi )}{2 \pi }+\frac{1}{2}\right)\right)
\end{multline}
where $Re(s)>1,0,z\leq 1$.\\\\
\end{proposition}
\begin{proposition}
The Hurwitz-Lerch zeta transcendent
\begin{equation}\label{eq:prelim_2}
\Phi (z,-s,0)=\text{Li}_{-s}(z)
\end{equation}
where $Re(s)>0$.
\end{proposition}
\begin{proposition}
The first partial derivative of the Hurwitz-Lerch zeta transcendent
\begin{equation}\label{eq:prelim_3}
\Phi'(-i,0,a)=\log \left(\frac{\Gamma \left(\frac{a}{4}\right)}{2 \Gamma
   \left(\frac{a+2}{4}\right)}\right)-i \log \left(\frac{\Gamma \left(\frac{a+1}{4}\right)}{2 \Gamma
   \left(\frac{a+3}{4}\right)}\right)
\end{equation}
where $a\in\mathbb{C}$,
\end{proposition}
\begin{proposition}
The first partial derivative of the Lerch transcendent
\begin{equation}\label{eq:prelim_4}
\Phi'(i,0,u)=\log \left(\frac{\Gamma \left(\frac{u}{4}\right)}{2 \Gamma
   \left(\frac{u+2}{4}\right)}\right)+i \log \left(\frac{\Gamma \left(\frac{u+1}{4}\right)}{2 \Gamma
   \left(\frac{u+3}{4}\right)}\right)
\end{equation}
where $a\in\mathbb{C}$.
\end{proposition}
\begin{proposition}
The Hurwitz zeta function [MathWorks, \href{https://www.mathworks.com/help/symbolic/sym.hurwitzzeta.html}{1}];
\begin{equation}
(2 \pi )^{-s} \Gamma (s) \left(e^{\frac{i \pi  s}{2}} \text{Li}_s\left(e^{-2 i a \pi
   }\right)+e^{-\frac{1}{2} i \pi  s} \text{Li}_s\left(e^{2 i a \pi }\right)\right)=\zeta (1-s,a)
\end{equation}
where $Re(s)>0, Im(a)>0$ or $Re(s)>1,Im(a)=0$.
\end{proposition}
\begin{proposition}
The Mellin transform of the the generalized reciprocal Pochhammer symbol;
\begin{equation}\label{eq:mellin_poch}
\int_0^{\infty } \frac{t^{-1+v+s v}}{\left(1+b t^v\right){}_{1+n}} \, dt=\frac{\pi  \csc (\pi  s)}{v n! b^{1+s}}\sum
   _{j=0}^n \binom{n}{j} (-1)^{j+1} (j+1)^s
\end{equation}
where $Re(v)>0$.
\end{proposition}
\subsection{The Hurwitz-Lerch zeta function}
We use equation (1.11.3) in \cite{erd} where $\Phi(z,s,v)$ is the Lerch function which is a generalization of the Hurwitz zeta $\zeta(s,v)$ and Polylogarithm functions $Li_{n}(z)$. In number theory and complex analysis, the Lerch function is a mathematical function that appears in many branches of mathematics and physics. It is named after Czech mathematician Mathias Lerch, who published a paper about the function in 1887. Numerous areas of mathematics, including number theory (especially in the investigation of the Riemann zeta function and its generalizations), complex analysis, and theoretical physics, all have uses for it. It can be used to express a variety of complex functions and series and is involved in numerous mathematical identities. The Lerch function has a series representation given by
\begin{equation}\label{knuth:lerch}
\Phi(z,s,v)=\sum_{n=0}^{\infty}(v+n)^{-s}z^{n}
\end{equation}
where $|z|<1, v \neq 0,-1,-2,-3,..,$ and is continued analytically by its integral representation given by
\begin{equation}\label{knuth:lerch1}
\Phi(z,s,v)=\frac{1}{\Gamma(s)}\int_{0}^{\infty}\frac{t^{s-1}e^{-(v-1)t}}{e^{t}-z}dt
\end{equation}
where $Re(v)>0$, and either $|z| \leq 1, z \neq 1, Re(s)>0$, or $z=1, Re(s)>1$.
\subsection{First contour integral representation involving the Pochhammer symbol}
This contour integral representation is derived by taking the Mellin transform of the reciprocal Pochhammer symbol with $c x^v+b$ as the first variable and $n+1$ as the second variable. Then we transform the cosecant function to the secant function by shifting the angle by $\pi/2$. Next we apply the Cauchy integral formula given in \cite{reyn4} and simplify.
\begin{multline}\label{eq:prelim_1}
\frac{1}{2\pi i}\int_{C}\int_{0}^{\infty}\frac{a^w w^{-k-1} x^{m+\frac{3 v}{2}+w-1}}{\left(c x^v+b\right){}_{n+1}}dxdw\\
=-\frac{1}{2\pi i}\int_{C}\sum_{j=0}^{n}\frac{\pi  (-1)^j a^w w^{-k-1}
   \binom{n}{j} c^{-\frac{m+w}{v}-\frac{3}{2}} \sec \left(\frac{\pi  (m+w)}{v}\right) (b+j)^{\frac{2 m+v+2 w}{2 v}}}{vn!}dw
\end{multline}
where $0 < Re(a) < n+1; n = 1,2,.., |\arg(b)|<\pi$.
\subsection{Second contour integral representation involving a generalized polynomial function}
This contour integral representation is derived by apply the $n$-th partial derivative of $b$ to equation (3.194.3) in \cite{grad} where $\nu=1$. We simplify the Pochhammer symbol using equation [Wolfram MathWorld,\href{https://mathworld.wolfram.com/PochhammerSymbol.html}{8}] in terms of Stirling's number of the first kind. We then shift the angle of the cosecant function by $\pi/2$ to get the secant function and apply the Cauchy integral formula in \cite{reyn4} and simplify in terms of the Stirling numbers of the first kind $S_n^{(j)}$ see [Wolfram MathWorld, \href{https://mathworld.wolfram.com/StirlingNumberoftheFirstKind.html}{(1)}].
\begin{multline}\label{eq:prelim_2}
\frac{1}{2\pi i}\int_{C}\int_{0}^{\infty}a^w w^{-k-1} \left(b x^v+1\right)^{-n-1} x^{m+\left(n+\frac{1}{2}\right) v+w}dxdw\\
=-\frac{1}{2\pi i}\int_{C}\sum_{j=0}^{n}\sum_{l=0}^{j}\frac{\pi  (-1)^{-j} a^w
   \left(-\frac{1}{v}\right)^l \binom{j}{l} S_n^{(j)} w^{-k+l-1} \sec \left(\frac{\pi  (m+w+1)}{v}\right) b^{-\frac{2 m+2n v+v+2 w+2}{2 v}} }{v n!}\\ \times \left(-\frac{m+1}{v}-n+\frac{1}{2}\right)^{j-l}dw
\end{multline}
where $0 < Re(a) < n+1; n = 1,2,.., |\arg(b)|<\pi$.
\subsection{Third contour integral representation}
This contour integral is derived in equation (3.2) in \cite{reyn_aims}.
\begin{multline}\label{eq:prelim_3}
\frac{2^{k+1} (i b)^k e^{i b m} \Phi \left(-e^{2 i b m},-k,\frac{b-i \log (a)}{2 b}\right)}{k!}=\frac{1}{2\pi i}\int_{C}a^w w^{-k-1} \sec
   (b (m+w))dw
\end{multline}
where $Re(\frac{w+m}{b})<0$.
\section{Derivation of contour integral representations}
In this section we derive the contour integral representations used to derive the main theorems in this work.
\subsection{The contour integral involving the reciprocal Pochhammer symbol}
\subsubsection{Left-hand side contour integral}
We use the method in~\cite{reyn4}. Using a generalization of Cauchy's integral formula we form the integral by replacing $y$ by $ax$ and multiply both sides by $\frac{x^{m+\frac{3 v}{2}-1}}{\left(c x^v+b\right){}_{n+1}}$ then the integral with respect to $x\in [0,\infty)$ to obtain
\begin{multline}\label{eq:poch_1}
\int_{0}^{\infty}\frac{x^{m+\frac{3 v}{2}-1} \log ^k(a x)}{k! \left(c x^v+b\right){}_{n+1}}dx\\
=\frac{1}{2\pi i}\int_{0}^{\infty}\int_{C}\frac{w^{-k-1} (a x)^w x^{m+\frac{3v}{2}-1}}{\left(c x^v+b\right){}_{n+1}}dwdx\\
=\frac{1}{2\pi i}\int_{C}\int_{0}^{\infty}\frac{w^{-k-1} (a x)^w x^{m+\frac{3v}{2}-1}}{\left(c x^v+b\right){}_{n+1}}dxdw
\end{multline}
where $0< Re(m) <1$. We are able to switch the order of integration over $x$ and $w$ using Fubini's theorem for multiple integrals see page 178 in \cite{gelca}, since the integrand is of bounded measure over the space $\mathbb{C} \times [0,\infty) $.
\subsubsection{Right-hand side contour integral}
Here we use equation \ref{eq:prelim_1} and replace $b\to \frac{\pi }{v},a\to a c^{-1/v} (b+j)^{\frac{1}{v}}$ then multiply both sides by $-\frac{\pi  (-1)^j \binom{n}{j} c^{-\frac{m}{v}-\frac{3}{2}} (b+j)^{\frac{2 m+v}{2 v}}}{v n!}$ take the finite sum over $j\in [0,n]$ to obtain
\begin{multline}\label{eq:poch_2}
\sum_{j=0}^{n}\frac{(-1)^j (2 \pi )^{k+1} v \left(\frac{i}{v}\right)^{k+2} e^{\frac{i \pi  m}{v}} \binom{n}{j}
   c^{-\frac{m}{v}-\frac{3}{2}} (b+j)^{\frac{m}{v}+\frac{1}{2}} }{k! n!}\\ \times
   \Phi \left(-e^{\frac{2 i m \pi }{v}},-k,\frac{\pi -i v
   \log \left(a c^{-1/v} (b+j)^{\frac{1}{v}}\right)}{2 \pi }\right)\\
=-\frac{1}{2\pi i}\sum_{j=0}^{n}\int_{C}\frac{\pi  (-1)^j a^w w^{-k-1} \binom{n}{j}
   c^{-\frac{m+w}{v}-\frac{3}{2}} \sec \left(\frac{\pi  (m+w)}{v}\right) (b+j)^{\frac{2 m+v+2 w}{2 v}}}{v n!}dw\\
   =-\frac{1}{2\pi i}\int_{C}\sum_{j=0}^{n}\frac{\pi  (-1)^j a^w w^{-k-1} \binom{n}{j}
   c^{-\frac{m+w}{v}-\frac{3}{2}} \sec \left(\frac{\pi  (m+w)}{v}\right) (b+j)^{\frac{2 m+v+2 w}{2 v}}}{v n!}dw
\end{multline}
where $0 < Re(a) < n+1; n = 1,2,.., |\arg(b)|<\pi, Im\left(\frac{\pi  (m+w)}{v}\right)>0$ in order for the sum to converge. We are able to switch the order of integration and summation using Tonelli's theorem for integrals and series see page 178 in \cite{gelca}, since the integrand and summand are of bounded measure over the space $\mathbb{C} \times [0,n] $.
\subsection{The contour integral involving the reciprocal polynomial function}
\subsubsection{Left-hand side contour integral}
We use the method in~\cite{reyn4}. Using a generalization of Cauchy's integral formula we form the integral by replacing $y$ by $ax$ and multiply both sides by $\left(b x^v+1\right)^{-n-1} x^{m+\left(n+\frac{1}{2}\right) v}$ then the integral with respect to $x\in [0,\infty)$ to obtain
\begin{multline}\label{eq:poly_1}
\int_{0}^{\infty}\frac{\log ^k(a x) \left(b x^v+1\right)^{-n-1} x^{m+\left(n+\frac{1}{2}\right) v}}{k!}dx\\
=\frac{1}{2\pi i}\int_{0}^{\infty}\int_{C}w^{-k-1} (a x)^w \left(b
   x^v+1\right)^{-n-1} x^{m+\left(n+\frac{1}{2}\right) v}dwdx\\
=\frac{1}{2\pi i}\int_{C}\int_{0}^{\infty}w^{-k-1} (a x)^w \left(b
   x^v+1\right)^{-n-1} x^{m+\left(n+\frac{1}{2}\right) v}dxdw
\end{multline}
where $0< Re(m) <1$. We are able to switch the order of integration over $x$ and $w$ using Fubini's theorem for multiple integrals see page 178 in \cite{gelca}, since the integrand is of bounded measure over the space $\mathbb{C} \times [0,\infty) $.
\subsubsection{Right-hand side contour integral}
Here we use equation \ref{eq:prelim_1} and replace $b\to \frac{\pi }{v},m\to m+1,a\to a b^{-1/v},k\to k-l$ then multiply both sides by 
\begin{equation}
\frac{\pi  (-1)^{-j} \left(-\frac{1}{v}\right)^l \binom{j}{l} S_n^{(j)} b^{-\frac{2 m+2 n v+v+2}{2 v}}
   \left(-\frac{m+1}{v}-n+\frac{1}{2}\right)^{j-l}}{v n!}
   \end{equation}
   take the finite sums over $j\in [0,n]$ and $l\in [0,j]$ to obtain
\begin{multline}\label{eq:poly_2}
\sum _{j=0}^n \sum _{l=0}^j \frac{(-1)^{-j} b^{-\frac{2+2 m+v+2 n v}{2 v}} e^{\frac{i (1+m) \pi }{v}} (2 \pi
   )^{1+k-l} \left(\frac{1}{2}-n-\frac{1+m}{v}\right)^{j-l} \left(-\frac{1}{v}\right)^l \left(\frac{i}{v}\right)^{k-l}
   \binom{j}{l}}{v (k-l)! n!}\\ \times
 \Phi \left(-e^{\frac{2 i (1+m) \pi }{v}},-k+l,\frac{\pi -i v \log \left(a b^{-1/v}\right)}{2 \pi }\right)
   S_n^{(j)}\\
=\frac{1}{2\pi i}\sum _{j=0}^n \sum _{l=0}^j \int_{C}\frac{(-1)^{-j} a^w b^{-\frac{2+2 m+v+2 n v+2 w}{2 v}} \pi \left(\frac{1}{2}-n-\frac{1+m}{v}\right)^{j-l} \left(-\frac{1}{v}\right)^l w^{-1-k+l} \binom{j}{l}}{v n!}\\  \times\sec \left(\frac{\pi (1+m+w)}{v}\right) S_n^{(j)}dw\\
=\frac{1}{2\pi i}\int_{C}\sum _{j=0}^n \sum _{l=0}^j \frac{(-1)^{-j} a^w b^{-\frac{2+2 m+v+2 n v+2 w}{2 v}} \pi \left(\frac{1}{2}-n-\frac{1+m}{v}\right)^{j-l} \left(-\frac{1}{v}\right)^l w^{-1-k+l} \binom{j}{l}}{v n!}\\  \times\sec \left(\frac{\pi (1+m+w)}{v}\right) S_n^{(j)}dw
\end{multline}
where $0 < Re(a) < n+1; n = 1,2,.., |\arg(b)|<\pi, Im\left(\frac{\pi  (1+m+w)}{v}\right)>0$ in order for the sum to converge. We are able to switch the order of integration and summation using Tonelli's theorem for integrals and series see page 178 in \cite{gelca}, since the integrand and summand are of bounded measure over the space $\mathbb{C} \times [0,n] \times [0,j] $.
\section{Generalized infinite logarithm integrals in terms of finite series}
In this section we derive definite infinite integrals involving the product of generalized logarithm and reciprocal Pochhammer symbol functions for the first formula. The second infinite integral involves the product of generalized logarithm and reciprocal polynomial function. 
\begin{theorem}
A general integral involving logarithmic powers Pochhammer symbol and rational functions. For all $k,b,c,v\in\mathbb{C}$ then,
\begin{multline}\label{eq:thm1}
\int_0^{\infty } \frac{x^{-1+m} \log ^k(a x)}{\left(b+c x^v\right){}_{1+n}} \, dx\\
=\sum _{j=0}^n \frac{(-1)^j
   c^{-\frac{3}{2}-\frac{m-\frac{3 v}{2}}{v}} e^{\frac{i \pi  \left(m-\frac{3 v}{2}\right)}{v}}
   (b+j)^{\frac{1}{2}+\frac{m-\frac{3 v}{2}}{v}} (2 \pi )^{1+k} \left(\frac{i}{v}\right)^{2+k} v \binom{n}{j} }{n!}\\ \times
\Phi
   \left(-e^{\frac{2 i \pi  \left(m-\frac{3 v}{2}\right)}{v}},-k,\frac{\pi -i v \log \left(a c^{-1/v}
   (b+j)^{1/v}\right)}{2 \pi }\right)
\end{multline}
where $Re(m)>0,Re(v)>0$ and there exists a singularity at $x=i/a$. If $Re(b)<0, Re(a)>0$.
\end{theorem}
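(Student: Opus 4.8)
The plan is to read off the stated identity as the equality of two evaluations of a single Cauchy contour integral that has already been assembled in the derivation section: the definite integral is one side and the finite Hurwitz-Lerch series is the other. Thus I would not integrate directly but instead splice together equations \eqref{eq:poch_1} and \eqref{eq:poch_2}, whose right-hand sides are the same contour integral.

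First I would note that \eqref{eq:poch_1} rewrites $\int_0^\infty \frac{x^{m+3v/2-1}\log^k(ax)}{k!\,(cx^v+b)_{n+1}}\,dx$ as $\frac{1}{2\pi i}\int_C\int_0^\infty \frac{w^{-k-1}(ax)^w x^{m+3v/2-1}}{(cx^v+b)_{n+1}}\,dx\,dw$, the interchange of the $x$- and $w$-integrations being licensed by Fubini's theorem as recorded there. The inner $x$-integral is exactly the left-hand side of \eqref{eq:prelim_1}, so this contour integral equals the right-hand side of \eqref{eq:prelim_1}, namely a $w$-integral whose integrand is a finite $j$-sum of secant-bearing terms.

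Next I would observe that \eqref{eq:poch_2} evaluates that same $w$-integral by applying the third representation \eqref{eq:prelim_3} to each secant factor, under the replacements $b\to\pi/v$ and $a\to ac^{-1/v}(b+j)^{1/v}$ together with the accompanying $j$-dependent prefactor, and then exchanging the $j$-sum with the $w$-integration by Tonelli's theorem; the result is the finite sum of Hurwitz-Lerch zeta functions on the left of \eqref{eq:poch_2}. Since the right-hand sides of \eqref{eq:poch_1} and \eqref{eq:poch_2} are manifestly identical, their left-hand sides agree, so the integral equals the $\Phi$-series. To bring this into the stated form I would substitute $m\to m-\tfrac{3v}{2}$, which turns the exponent $m+\tfrac{3v}{2}-1$ into $m-1$ and produces the argument $m-\tfrac{3v}{2}$ appearing throughout \eqref{eq:thm1}, and then multiply through by $k!$ to clear the factor $1/k!$ common to both sides; the identity then matches \eqref{eq:thm1} term by term.

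The main obstacle is not the algebra but the bookkeeping on the domain of validity. I would need to check that the conditions $\mathrm{Re}(m)>0$ and $\mathrm{Re}(v)>0$, after the shift $m\to m-\tfrac{3v}{2}$, still place $w$ in the half-plane where the secant-to-$\Phi$ conversion of \eqref{eq:prelim_3} holds and where the $j$-sum in \eqref{eq:poch_2} converges (the condition $\mathrm{Im}(\pi(m+w)/v)>0$ there), and that the derivation's strip $0<\mathrm{Re}(m)<1$ is widened to $\mathrm{Re}(m)>0$ by analytic continuation in $m$. I would also need to fix the branches of $c^{-1/v}$, $(b+j)^{1/v}$ and $\log(ac^{-1/v}(b+j)^{1/v})$ consistently, and to confirm under $\mathrm{Re}(b)<0$, $\mathrm{Re}(a)>0$ that the pole responsible for the singularity at $x=i/a$ lies off the contour $C$, so that the principal-value reading of the integral is the one delivered by this chain of identities.
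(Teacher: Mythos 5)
Your proposal matches the paper's own proof, which likewise equates the left-hand sides of \eqref{eq:poch_1} and \eqref{eq:poch_2} on the grounds that their right-hand sides are the same contour integral (via \eqref{eq:prelim_1} and the secant-to-$\Phi$ conversion of \eqref{eq:prelim_3}), and then simplifies; your added bookkeeping on the shift $m\to m-\tfrac{3v}{2}$, the factor $k!$, and the domains of validity only makes explicit what the paper leaves implicit. No substantive difference in approach.
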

\begin{proof}
The right-hand sides of relations (\ref{eq:poch_1}) and (\ref{eq:poch_2}) are identical, relative to equation (\ref{eq:prelim_1}); hence, the left-hand sides of the same are identical too. Simplifying with the Pochhammer function in [Wolfram MathWorld, \href{https://mathworld.wolfram.com/PochhammerSymbol.html}{1}] yields the desired conclusion.
\end{proof}
\begin{theorem}
A Mellin-Transform integral with logarithmic powers. For all $k,b,c,v\in\mathbb{C}$ then,
\begin{multline}\label{eq:thm2}
\int_0^{\infty } \frac{x^{-1+m} \log ^k(a x)}{\left(1+b x^v\right)^{n+1}} \, dx\\
=\sum _{j=0}^n \sum _{l=0}^j
   \frac{(1+k-l)_l (-1)^{-j} b^{-\frac{m}{v}} e^{\frac{i \pi  \left(m-\left(\frac{1}{2}+n\right) v\right)}{v}} (2 \pi
   )^{1+k-l} \left(1-\frac{m}{v}\right)^{j-l} \left(-\frac{1}{v}\right)^l \left(\frac{i}{v}\right)^{k-l} \binom{j}{l}
   }{v n!}\\ \times
\Phi \left(e^{\frac{2 i \pi  (m-n v)}{v}},-k+l,\frac{\pi -i v \log \left(a b^{-1/v}\right)}{2 \pi }\right)
   S_n^{(j)}
\end{multline}
when $Re(b)<0$, then there exists a singularity at $x=i/a$.
\end{theorem}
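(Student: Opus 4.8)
The plan is to reproduce the mechanism used for~\eqref{eq:thm1}: a single double contour integral is evaluated in two different ways, and the two closed forms are equated. The right-hand side of~\eqref{eq:poly_1} is that double integral with the $w$-integration carried out first through the generalized Cauchy formula of~\cite{reyn4}, whereas the right-hand side of~\eqref{eq:poly_2} is the very same object with the $x$-integration carried out first by means of the preliminary representation~\eqref{eq:prelim_2}. Since these two right-hand sides are identical relative to~\eqref{eq:prelim_2}, the corresponding left-hand sides must be identical as well; equating the integral on the left of~\eqref{eq:poly_1} with the Hurwitz--Lerch series on the left of~\eqref{eq:poly_2} is exactly what produces~\eqref{eq:thm2}.

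I would carry this out in the following order. First I would verify that the inner integral on the right of~\eqref{eq:poly_1}, namely
\[
\int_{0}^{\infty} a^{w} w^{-k-1}\left(b x^{v}+1\right)^{-n-1} x^{m+\left(n+\frac{1}{2}\right)v+w}\,dx ,
\]
is precisely the $x$-integrand appearing on the left of~\eqref{eq:prelim_2} (using $(ax)^{w}=a^{w}x^{w}$), so that~\eqref{eq:prelim_2} immediately rewrites the right of~\eqref{eq:poly_1} as the finite double sum of contour integrals sitting on the right of~\eqref{eq:poly_2}. Next I would apply the third representation~\eqref{eq:prelim_3} summand by summand to each contour integral of the form $\frac{1}{2\pi i}\int_{C} a^{w} w^{-k+l-1}\sec\!\left(\frac{\pi(1+m+w)}{v}\right) dw$, under the dictionary $b\to\frac{\pi}{v}$, $m\to m+1$, $a\to a b^{-1/v}$ and $k\to k-l$; this converts every summand into a single Hurwitz--Lerch zeta value and delivers the left-hand side of~\eqref{eq:poly_2}. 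Finally I would clear the factor $1/k!$ carried on the left of~\eqref{eq:poly_1}, which turns the $\frac{1}{(k-l)!}$ of each summand into $\frac{k!}{(k-l)!}=(1+k-l)_{l}$, and I would perform the shift $m\mapsto m-1-\left(n+\frac{1}{2}\right)v$ that reduces the exponent $m+\left(n+\frac{1}{2}\right)v$ to $-1+m$. The same shift collapses $\frac{1}{2}-n-\frac{1+m}{v}$ to $1-\frac{m}{v}$, sends $b^{-\frac{2+2m+v+2nv}{2v}}$ to $b^{-m/v}$, and absorbs the explicit sign in $-e^{2 i (1+m)\pi/v}$ through $e^{-i\pi}=-1$ to yield the first argument $e^{2 i \pi (m-n v)/v}$ of the zeta function in~\eqref{eq:thm2}.

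The main obstacle is less any single computation than the consistent bookkeeping and the domain on which the equality is asserted. Both~\eqref{eq:prelim_2} and~\eqref{eq:poly_2} are valid only under the convergence hypothesis $\operatorname{Im}\!\left(\frac{\pi(1+m+w)}{v}\right)>0$ required to expand the secant into the Hurwitz--Lerch series, together with $0<\operatorname{Re}(m)<1$ inherited from~\eqref{eq:poly_1}; the formula must then be propagated to the parameter ranges stated in the theorem by analytic continuation of the Hurwitz--Lerch zeta function, rather than of its defining series. The genuinely delicate step is checking that the substitution dictionary really carries the summand of~\eqref{eq:prelim_2} onto the summand of~\eqref{eq:poly_2}, and that after the shift the accumulated prefactor matches
\[
\frac{(1+k-l)_{l}(-1)^{-j} b^{-m/v} e^{\frac{i\pi\left(m-\left(\frac{1}{2}+n\right)v\right)}{v}}(2\pi)^{1+k-l}\left(1-\frac{m}{v}\right)^{j-l}\left(-\frac{1}{v}\right)^{l}\left(\frac{i}{v}\right)^{k-l}\binom{j}{l}}{v\,n!}\,S_{n}^{(j)}
\]
exactly; tracking the Stirling numbers $S_{n}^{(j)}$ of the first kind and the half-integer powers of $b$ through the chain of replacements is where sign and branch errors are most likely to creep in.
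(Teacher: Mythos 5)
Your proposal is correct and follows exactly the route the paper takes: equating the left-hand sides of~\eqref{eq:poly_1} and~\eqref{eq:poly_2} because their right-hand sides coincide via~\eqref{eq:prelim_2}, then simplifying $k!/(k-l)!$ into the Pochhammer factor $(1+k-l)_l$. Your additional bookkeeping (the shift $m\mapsto m-1-\left(n+\frac{1}{2}\right)v$ and the resulting collapse of the exponents of $b$ and the secant argument) is an accurate expansion of what the paper compresses into its two-sentence proof.
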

\begin{proof}
The right-hand sides of relations (\ref{eq:poly_1}) and (\ref{eq:poly_2}) are identical, relative to equation (\ref{eq:prelim_2}); hence, the left-hand sides of the same are identical too. Simplifying with the Pochhammer function in [Wolfram MathWorld, \href{https://mathworld.wolfram.com/PochhammerSymbol.html}{1}] yields the desired conclusion.
\end{proof}
\section{Derivations and evaluations of definite integrals and transformations}
\begin{example}
In this example we use equation (\ref{eq:thm1}) and take the first partial derivative with respect to $k$ and set $k\to 0$. Next form a second equation by replacing $a\to 1/a$ and add the two equations and simplify;
\begin{multline}\label{eq1}
\int_0^{\infty } \frac{x^{-1+m} \log \left(\log ^2(a)-\log ^2(x)\right)}{\left(b+c x^v\right){}_{1+n}} \,
   dx\\
=\sum _{j=0}^n \frac{2 c^{-\frac{m}{v}} e^{i \pi  \left(j+\frac{m}{v}\right)} (b+j)^{-1+\frac{m}{v}} \pi 
   \binom{n}{j} }{v n!}\\ \times
\left(\left(-i+\cot \left(\frac{m \pi }{v}\right)\right) \log \left(\frac{2 i \pi }{v}\right)+i
   \Phi'\left(e^{\frac{2 i m \pi }{v}},0,\frac{\pi -i v \log \left(\frac{c^{-1/v}
   (b+j)^{1/v}}{a}\right)}{2 \pi }\right)\right. \\ \left.
+i \Phi'\left(e^{\frac{2 i m \pi
   }{v}},0,\frac{\pi -i v \log \left(a c^{-1/v} (b+j)^{1/v}\right)}{2 \pi }\right)\right)\\
-\sum _{j=0}^n
   \frac{i c^{-\frac{m}{v}} e^{i j \pi } (b+j)^{-1+\frac{m}{v}} \pi ^2 \binom{n}{j} \csc \left(\frac{m \pi
   }{v}\right)}{v n!}
\end{multline}
where $Re(m)>0,Re(v)>0$ and there exists a singularity at $x=i/a$. If $Re(b)<0, Re(a)>0$.
\end{example}
\begin{example}
 In this example we use equation (\ref{eq:thm1}) and set $k \to -1,a \to e^{i \pi  a}$. Then we form a second equation by replacing $a \to -a$ and take their difference. Next we replace $m \to s$ and take the difference and simplify.  A Mellin-Type Integral Involving a logarithmic denominator, power-law numerator, and generalized rational decay. 
\begin{multline}
\int_0^{\infty } \frac{-x^m+x^s}{\left(a^2 \pi ^2+\log ^2(x)\right) \left(b+c x^v\right){}_{1+n}} \, dx
=-\sum _{j=0}^n \frac{i (-1)^j c^{-1/v} (b+j)^{-1+\frac{1}{v}} \binom{n}{j}}{2 a \pi  n!}\\
\left(c^{-\frac{m}{v}} e^{\frac{i (1+m) \pi }{v}} (b+j)^{m/v} \left(\Phi \left(e^{\frac{2 i (1+m) \pi }{v}},1,\frac{\pi -a \pi  v-i v \log \left(c^{-1/v} (b+j)^{1/v}\right)}{2 \pi}\right)\right.\right. \\ \left.\left.
-\Phi \left(e^{\frac{2 i (1+m) \pi }{v}},1,\frac{\pi +a \pi  v-i v \log \left(c^{-1/v} (b+j)^{1/v}\right)}{2 \pi }\right)\right)\right. \\ \left.
+c^{-\frac{s}{v}} e^{\frac{i \pi  (1+s)}{v}}
   (b+j)^{s/v} \left(-\Phi \left(e^{\frac{2 i \pi  (1+s)}{v}},1,\frac{\pi -a \pi  v-i v \log \left(c^{-1/v} (b+j)^{1/v}\right)}{2 \pi }\right)\right.\right. \\ \left.\left.
   +\Phi \left(e^{\frac{2 i \pi (1+s)}{v}},1,\frac{\pi +a \pi  v-i v \log \left(c^{-1/v} (b+j)^{1/v}\right)}{2 \pi }\right)\right)\right)
\end{multline}
where $|Re(a)| < 1/2, 0 < Re(v)\leq \pi/2$ and there exists a singularity at $x=i e^{\pi  a}$.
\end{example}
\begin{example}
In this example we use equation (\ref{eq:thm1}) and set $k\to -1,a\to e^{i \pi  a},b\to c\to v\to 1$. Then we form a second equation by replacing $a\to -a$ and take their difference.  Then we take the limit as $m\to 1$ and simplify using equation [Wolfram MathWorld, \href{https://mathworld.wolfram.com/HurwitzZetaFunction.html}{(19)}]. A Mellin-type Integral with logarithmic denominator and Pochhammer decay. 
\begin{multline}
\int_0^{\infty } \frac{1}{\left(a^2 \pi ^2+\log ^2(x)\right) (1+x)_{1+n}} \, dx\\
=\sum _{j=0}^n \frac{i (-1)^j \binom{n}{j} \left(\psi ^{(0)}\left(\frac{\pi -a \pi -i \log (1+j)}{2
   \pi }\right)-\psi ^{(0)}\left(\frac{\pi +a \pi -i \log (1+j)}{2 \pi }\right)\right)}{2 a \pi  n!}
\end{multline}
where $|Re(a)| \leq 1/4$.
\end{example}
\begin{example}
In this example we use equation (\ref{eq1}) and set $b\to -\frac{1}{2},c\to \frac{1}{2},m\to 1,v\to 2$ and simplify;
\begin{multline}\label{eq2}
\int_0^{\infty } \frac{\log \left(\log ^2(a)-\log ^2(x)\right)}{\left(\frac{1}{2}
   \left(-1+x^2\right)\right){}_{1+n}} \, dx\\
=\frac{2 \pi  }{n!}\sum _{j=0}^n \frac{(-1)^j \binom{n}{j} }{\sqrt{-1+2 j}}\log
   \left(\frac{2 i \pi  \Gamma \left(\frac{3}{4}-\frac{i \log \left(\frac{\sqrt{-1+2 j}}{a}\right)}{2 \pi }\right)
   \Gamma \left(\frac{3}{4}-\frac{i \log \left(a \sqrt{-1+2 j}\right)}{2 \pi }\right)}{\Gamma \left(\frac{\pi -2 i
   \log \left(\frac{\sqrt{-1+2 j}}{a}\right)}{4 \pi }\right) \Gamma \left(\frac{\pi -2 i \log \left(a \sqrt{-1+2
   j}\right)}{4 \pi }\right)}\right)\\
-\frac{\pi ^2 i }{n!}\sum _{j=0}^n \frac{(-1)^j
   \binom{n}{j}}{\sqrt{-1+2 j}}
\end{multline}
where $Re(a)>0$ and there exists a singularity at $x=i/a$.
\end{example}
\begin{example}
In this example we use equation (\ref{eq2}) and set $a\to e^{\pi},n\to 1$ and simplify;
\begin{multline}
\int_0^{\infty } \frac{\log \left(\pi ^2-\log ^2(x)\right)}{1-x^4} \, dx=\frac{1}{2} \pi  \log \left(\frac{2
   \pi ^{1+i} \coth ^i\left(\frac{\pi }{2}\right) \Gamma \left(\frac{3}{4}-\frac{i}{2}\right) \Gamma
   \left(\frac{3}{4}+\frac{i}{2}\right)}{\Gamma \left(\frac{1}{4}-\frac{i}{2}\right) \Gamma
   \left(\frac{1}{4}+\frac{i}{2}\right)}\right)
\end{multline}
where there exists a singularity at $x=ie^{-\pi}$. 
\end{example}
\begin{example}
In this example we use equation (\ref{eq2}) and set $a\to e^{\pi},n\to 0$ and simplify;
\begin{equation}
\int_0^{\infty } \frac{\log \left(\pi ^2-\log ^2(x)\right)}{1-x^2} \, dx=i \pi  \log \left(\pi  \coth
   \left(\frac{\pi }{2}\right)\right)
\end{equation}
where there exists a singularity at $x=ie^{\pi}$.
\end{example}
\begin{example}
In this example we use equation (\ref{eq1}) and take the first partial derivative with respect to $a$ and simplify;
\begin{multline}
\int_0^{\infty } \frac{1}{\left(\log ^2(a)-\log ^2(x)\right) \left(\frac{1}{2}
   \left(-1+x^2\right)\right){}_{1+n}} \, dx\\
=-\sum _{j=0}^n \frac{i (-1)^j \binom{n}{j} }{2 \sqrt{-1+2 j} n! \log (a)}\left(\psi
   ^{(0)}\left(\frac{\pi -2 i \log \left(\frac{\sqrt{-1+2 j}}{a}\right)}{4 \pi }\right)\right. \\ \left.
-\psi
   ^{(0)}\left(\frac{3}{4}-\frac{i \log \left(\frac{\sqrt{-1+2 j}}{a}\right)}{2 \pi }\right)-\psi
   ^{(0)}\left(\frac{\pi -2 i \log \left(a \sqrt{-1+2 j}\right)}{4 \pi }\right)\right. \\ \left.
+\psi ^{(0)}\left(\frac{3}{4}-\frac{i
   \log \left(a \sqrt{-1+2 j}\right)}{2 \pi }\right)\right)
\end{multline}
where $Re(a)>0$ and there exists a singularity at $x=i/a$.
\end{example}
\begin{example}
In this example we look at a generalized Malmsten form. Here we use equation (\ref{eq:thm1}) and take the first partial derivative with respect to $k$ and set $k\to 0$ and simplify;
\begin{multline}
\int_0^{\infty } \frac{x^{-1+m} \log (\log (x))}{\left(b+c x^v\right){}_{1+n}} \, dx\\
=\sum _{j=0}^n \frac{2 i (-1)^j
   c^{-\frac{m}{v}} e^{\frac{i m \pi }{v}} (b+j)^{-1+\frac{m}{v}} \pi  \binom{n}{j} }{\left(-1+e^{\frac{2 i m \pi
   }{v}}\right) v n!}\\ \times
\left(\log \left(\frac{2 i \pi
   }{v}\right)+\left(-1+e^{\frac{2 i m \pi }{v}}\right) \Phi'\left(e^{\frac{2 i m \pi
   }{v}},0,\frac{\pi -i v \log \left(c^{-1/v} (b+j)^{1/v}\right)}{2 \pi }\right)\right)
\end{multline}
where $n = 1,2,.., |\arg(b)|<\pi$ where a singularity exists at $x=i/2$.
\end{example}
%
%
\begin{example}
In this example we use equation (\ref{eq2}) and set $b\to -1,n\to 0$. Next take the first partial derivative with respect to $k$ and set $k\to 0$ and simplify;
\begin{multline}
\int_0^{\infty } \frac{x^{-1+m} \log (\log (a x))}{\left(-1+x^v\right)^2} \, dx
=\frac{(-1)^{-\frac{m}{v}}
   e^{\frac{i m \pi }{v}} }{\left(-1+e^{\frac{2 i m \pi
   }{v}}\right) v^2}\\ \times
\left(\left(-1+e^{\frac{2 i m \pi }{v}}\right) v \Phi \left(e^{\frac{2 i m \pi }{v}},1,\frac{\pi
   -i v \log \left((-1)^{-1/v} a\right)}{2 \pi }\right)-2 i \pi  (m-v) \left(\log \left(\frac{2 i \pi
   }{v}\right)\right.\right. \\ \left.\left.
+\left(-1+e^{\frac{2 i m \pi }{v}}\right) \Phi'\left(e^{\frac{2 i m \pi
   }{v}},0,\frac{\pi -i v \log \left((-1)^{-1/v} a\right)}{2 \pi }\right)\right)\right)
\end{multline}
where $0< Re(m)<1,Re(v)>1$ where there exists a singularity at $x=i/a$.
\end{example}
\begin{example}
Extended form of equation (3.6) in \cite{kolbig}. In this example we use equation (\ref{eq:thm2}) and set $b\to -1,v\to 1$ and simplify;
\begin{multline}
\int_{0}^{\infty } \frac{x^{m-1} \log ^k(a x)}{(1-x)^{n+1}} \, dx
=\sum _{j=0}^n \sum _{l=0}^j
   \frac{(-1)^{-j+l-m} i^{k-l} e^{i \left(-\frac{1}{2}+m-n\right) \pi } (1-m)^{j-l} (2 \pi )^{1+k-l} \binom{j}{l}}{n!}\\ \times
 \Phi
   \left(e^{2 i (m-n) \pi },-k+l,\frac{\pi -i \log (-a)}{2 \pi }\right) (1+k-l)_l S_n^{(j)}
\end{multline}
where $0< Re(m)<1$ where there exists a singularity at $x=i/a$.
\end{example}
\begin{example}
Hurwitz-Lerch zeta transformation. In this example we use equation (\ref{eq:thm2}) and set $n=0$. Then form a second equation by replacing $b\to bi$, then form a third equation by replacing $b\to -b$. Then take the difference of the third and fourth equations, multiply both sides by $1/(2bi)$ and replace $m\to m-v$. Then using the first equation replace $b\to b^2,v\to 2v$ and equate the left-hand sides and write down the transformation in terms of the Hurwitz-Lerch zeta function;
\begin{multline}\label{eq:lerch_transform}
\Phi \left(e^{\frac{i m \pi }{v}},-k,\frac{1}{2}-\frac{i v \log \left(a b^{-1/v}\right)}{\pi }\right)\\
=2^k
   e^{\frac{i \pi  m}{2 v}} b^{-\frac{m}{v}} \left((-i b)^{m/v} \Phi \left(e^{\frac{2 i m \pi }{v}},-k,\frac{\pi -i v
   \log \left(a (i b)^{-1/v}\right)}{2 \pi }\right)\right. \\ \left.
+(i b)^{m/v} \Phi \left(e^{\frac{2 i m \pi }{v}},-k,\frac{\pi -i v
   \log \left(a (-i b)^{-1/v}\right)}{2 \pi }\right)\right)
\end{multline}
where $Re(v)>1$.
\end{example}
\begin{example}
In this example we use equation (\ref{eq:thm2}) and $k\to -1,a\to e^{a i}$, then we form a second equation by replacing $a\to -a$ and taking there difference and simplifying;
\begin{multline}\label{eq:log_a}
\int_0^{\infty } \frac{\left(1+b x^v\right)^{-1-n}}{a^2 \pi ^2-\log ^2(x)} \, dx\\
=-\sum _{j=0}^n \sum _{l=0}^j
   \frac{(-1)^{-j+l} b^{-1/v} e^{-\frac{i \pi  (-1+n v)}{v}} (2 \pi )^{-1-l} \left(-\frac{1}{v}\right)^l
   \left(\frac{i}{v}\right)^{-l} \left(\frac{-1+v}{v}\right)^{j-l} \binom{j}{l} l! }{a n!}\\ \times
\left(-\Phi \left(e^{-\frac{2 i \pi
    (-1+n v)}{v}},1+l,\frac{\pi +i a \pi  v-i v \log \left(b^{-1/v}\right)}{2 \pi }\right)\right. \\ \left.
+\Phi \left(e^{-\frac{2 i
   \pi  (-1+n v)}{v}},1+l,-\frac{i \left(\pi  (i+a v)+v \log \left(b^{-1/v}\right)\right)}{2 \pi }\right)\right)
   S_n^{(j)}
\end{multline}
where $Re(a)<1/Re(v)$.
\end{example}
\begin{example}
In this example we use equation (\ref{eq:log_a}) and take the first partial derivative with respect to $a$ and set $b\to e^{b\pi i}$ and simplify;
\begin{multline}
\int_0^{\infty } \frac{\left(1+e^{i b \pi } x^v\right)^{-1-n}}{\left(-a^2 \pi ^2+\log ^2(x)\right)^2} \,
   dx\\
=\sum _{j=0}^n \sum _{l=0}^j \frac{(-1)^{-j+l} e^{-\frac{i \pi  (-1+b+n v)}{v}} (2 \pi )^{-3-l}
   \left(-\frac{1}{v}\right)^l \left(\frac{i}{v}\right)^{-l} \left(\frac{-1+v}{v}\right)^{j-l} \binom{j}{l} l!
   }{a^3 n!}\\ \times
\left(-2 \Phi \left(e^{-\frac{2 i \pi  (-1+n v)}{v}},1+l,\frac{1}{2} (1-b-i a v)\right)+2 \Phi \left(e^{-\frac{2 i
   \pi  (-1+n v)}{v}},1+l,\frac{1}{2} (1-b+i a v)\right)\right. \\ \left.
+i a (1+l) v \left(\Phi \left(e^{-\frac{2 i \pi  (-1+n
   v)}{v}},2+l,\frac{1}{2} (1-b-i a v)\right)\right.\right. \\ \left.\left.
+\Phi \left(e^{-\frac{2 i \pi  (-1+n v)}{v}},2+l,\frac{1}{2} (1-b+i a
   v)\right)\right)\right) S_n^{(j)}
\end{multline}
where $Re(a)<1/Re(v)$.
\end{example}
\begin{example}
Extended Diekama [Diekama, \href{https://arxiv.org/pdf/2411.08484}{pp 28}] form. In this example we use equation (\ref{eq:thm2}) and set $k\to -1,a\to e^{ai},n\to 1,v\to 2,b\to 1,c\to 1$. Then we replace $a$ to $2\pi]-a$, then take this equation and multiply by -1 and change the coefficient of log(x) to positive 1. Then we form a second equation by replacing $a\to a-2\pi$ and take there difference. Then we take the first partial derivative with respect to $a$ and simplify;
\begin{equation}\label{eq:diekama}
\int_0^{\infty } \frac{1}{(1+x)^2 \left(a^2+\log ^2(x)\right)^2} \, dx=\frac{2 \pi  \psi ^{(1)}\left(\frac{a+\pi }{2 \pi
   }\right)-a \psi ^{(2)}\left(\frac{a+\pi }{2 \pi }\right)}{8 a^3 \pi ^2}
\end{equation}
where $Re(a)>0$.
\end{example}
\begin{example}
In this example we use equation (\ref{eq:diekama}) and set $a=\pi$ and simplify;
\begin{equation}
\int_0^{\infty } \frac{1}{(1+x)^2 \left(\pi ^2+\log ^2(x)\right)^2} \, dx=\frac{\pi ^2+6 \zeta (3)}{24 \pi
   ^4}
\end{equation}
\end{example}
\section{Exercises}
Derive the below integral forms using the results above.
\begin{example}
Doubly-squared logarithmic integral with complex linear denominator.
\begin{multline}
\int_0^{\infty } \frac{1}{\left(1+e^{i b \pi } x\right)^2 \left(a^2 \pi ^2+\log ^2(x)\right)^2} \,
   dx\\
=\frac{e^{-i b \pi } }{16 a^3 \pi ^4}\left(2 \psi ^{(1)}\left(\frac{1}{2} (1+a-b)\right)+2 \psi ^{(1)}\left(\frac{1}{2} (1+a+b)\right)\right. \\ \left.
-a \left(\psi ^{(2)}\left(\frac{1}{2} (1+a-b)\right)+\psi ^{(2)}\left(\frac{1}{2}
   (1+a+b)\right)\right)\right)
\end{multline}
where $|Re(b)|<1$.
\end{example}
\begin{example}
Logarithmically weighted integral with complex pole and quadratic log denominator.
\begin{multline}
\int_0^{\infty } \frac{\log (x)}{\left(1+e^{i b \pi } x\right)^2 \left(a^2 \pi ^2+\log ^2(x)\right)^2} \,
   dx\\
=-\frac{i e^{-i b \pi } \left(\zeta \left(3,\frac{1}{2} (1+a-b)\right)-\zeta \left(3,\frac{1}{2}
   (1+a+b)\right)\right)}{8 a \pi ^3}
\end{multline}
where $|Re(b)|<1$.
\end{example}
%
%
%
\subsection{Generalized Malmsten logarithm integral forms}
In this section we derive generalized forms of Malmsten logarithm integrals. These forms were also studied by authors previously cited.
\begin{example}
In this example we use equation (\ref{eq:thm1}) and take the first partial derivative with respect to $k$ and set $k=0,a=1$ and simplify;
\begin{multline}\label{eq:malm_poch}
\int_0^{\infty } \frac{x^{-1+m} \log (\log (x))}{\left(b+c x^v\right){}_{1+n}} \, dx\\
=\sum _{j=0}^n \frac{2 i (-1)^j
   c^{-\frac{m}{v}} e^{\frac{i m \pi }{v}} (b+j)^{-1+\frac{m}{v}} \pi  \binom{n}{j} }{\left(-1+e^{\frac{2 i m \pi
   }{v}}\right) v n!}\\ \times
\left(\log \left(\frac{2 i \pi
   }{v}\right)+\left(-1+e^{\frac{2 i m \pi }{v}}\right) \Phi'\left(e^{\frac{2 i m \pi
   }{v}},0,\frac{\pi -i v \log \left(c^{-1/v} (b+j)^{1/v}\right)}{2 \pi }\right)\right)
\end{multline}
where $n = 1,2,.., |\arg(b)|<\pi$.
\end{example}
\begin{example}
In this example we use equation (\ref{eq:malm_poch}) and form a second equation by replacing $m\to s$ and taking there difference and simplify;
\begin{multline}\label{eq:malm_poch1}
\int_0^{\infty } \frac{\left(x^s-x^m\right) \log (\log (x))}{\left(b+c x^v\right){}_{1+n}} \, dx
=-\sum _{j=0}^n
   \frac{i (-1)^j \pi  \binom{n}{j} }{(b+j) v n!}\\ \times
\left(c^{-\frac{1+m}{v}} (b+j)^{\frac{1+m}{v}} \left(-i \csc \left(\frac{(1+m) \pi
   }{v}\right) \log \left(\frac{2 i \pi }{v}\right)\right.\right. \\ \left.\left.
+2 e^{\frac{i (1+m) \pi }{v}}
   \Phi'\left(e^{\frac{2 i (1+m) \pi }{v}},0,\frac{\pi -i v \log \left(c^{-1/v}
   (b+j)^{1/v}\right)}{2 \pi }\right)\right)\right. \\ \left.
+i c^{-\frac{1+s}{v}} (b+j)^{\frac{1+s}{v}} \left(\csc \left(\frac{\pi 
   (1+s)}{v}\right) \log \left(\frac{2 i \pi }{v}\right)\right.\right. \\ \left.\left.
+2 i e^{\frac{i \pi  (1+s)}{v}}
   \Phi'\left(e^{\frac{2 i \pi  (1+s)}{v}},0,\frac{\pi -i v \log \left(c^{-1/v}
   (b+j)^{1/v}\right)}{2 \pi }\right)\right)\right)
\end{multline}
where $Re(m)>0,Re(s)>0, n=1,2,3,..,$.
\end{example}
\begin{example}
In this example we use equation (\ref{eq:malm_poch1}) and set $b\to 1,c\to -1,v\to 1,m\to \frac{1}{2},s\to -\frac{1}{2}$ and simplify;
\begin{multline}
\int_0^{\infty } \frac{\left(\frac{1}{\sqrt{x}}-\sqrt{x}\right) \log (\log (x))}{(1-x)_{1+n}} \, dx
=\sum_{j=0}^n \frac{i (-1)^j \pi  \binom{n}{j} }{(1+j) n!}\\ \times
\left(\sqrt{1+j} \left(\log (2 i \pi )-2
   \Phi'\left(-1,0,\frac{\pi -i \log (-1-j)}{2 \pi }\right)\right)\right. \\ \left.
+i (1+j)^{3/2} \left(i
   \log (2 i \pi )-2 i \Phi'\left(-1,0,\frac{\pi -i \log (-1-j)}{2 \pi
   }\right)\right)\right)
\end{multline}
where $n=1,2,3,..,$ and there exists a singularity at $x=i$.
\end{example}
%
%
%
\begin{example}
In this example we derive a generalized Malmsten form in terms of a finite double series involving the first partial derivative of the Hurwitz-Lerch zeta function. There exists a singularity at $x=1/a$. In this example we look at deriving a generalized integral form originally studied by Malmsten $\log\log(x)$ \cite{malmsten} infinite integrals. Here we use equation (\ref{eq:thm2}) and take the first partial derivative with respect to $k$ and set $k\to 0$. We use equation [Wolfram MathWorld, \href{http://functions.wolfram.com/06.10.20.0007.01}{01}] to simplify the Pochammer symbol (gamma representation) when taking the derivative. This form is used as the Pochammer symbol for this evaluation is undefined when $k=0$. The finite series representation for the Pochammer symbol removes this discontinuity and thus the finite double series is able to be evaluated. We then simply the remaining terms;
\begin{multline}\label{eq:malm1}
\int_0^{\infty } \frac{x^{m-1} \log (\log (a x))}{\left(1+b x^v\right)^{n+1}} \, dx\\
=\sum _{j=0}^n \sum
   _{l=0}^j \frac{(-1)^{-j} b^{-\frac{m}{v}} e^{\frac{i \pi  \left(m-\left(\frac{1}{2}+n\right) v\right)}{v}} (2 \pi
   )^{1-l} \left(1-\frac{m}{v}\right)^{j-l} \left(-\frac{1}{v}\right)^l \left(\frac{i}{v}\right)^{-l} \binom{j}{l}
   S_n^{(j)} }{v n!}\\ \times
\left(\Phi \left(e^{\frac{2 i m \pi }{v}},l,\frac{\pi -i v \log \left(a b^{-1/v}\right)}{2 \pi }\right)
   \left(\log \left(\frac{2 i \pi }{v}\right) (1-l)_l+\sum _{p=1}^l (-1)^{l+p} (1-l)^{-1+p} p
   S_l^{(p)}\right)\right. \\ \left.
-(1-l)_l \Phi'\left(e^{\frac{2 i m \pi }{v}},l,\frac{\pi -i v \log
   \left(a b^{-1/v}\right)}{2 \pi }\right)\right)
\end{multline}
where $Re(v)>1,Re(m)>0$ and there exists a singularity at $x=1/a$. If $Re(b)<0$ then $Re(a)<0$ and the singularity exists at $x=i/a$.
\end{example}
\begin{example}
In this example we use equation (\ref{eq:malm1}) and form two equations by setting $a\to 1,b\to -1,m\to m, m\to,s$ and take their difference and simplify. Note we treat the quantity $0^0=1$, see \cite{mobius}, during the evaluation of the double series;
\begin{multline}
\int_0^{\infty } \frac{x^{s-1}-x^{m-1} }{(1-x)^{n+1}} \log (\log (x))\, dx\\
=\sum _{j=0}^n \sum
   _{l=0}^j \frac{(-1)^{-j+l-m} i^{-l} e^{i \left(-\frac{1}{2}+m-n\right) \pi } (1-m)^{j-l} (2 \pi )^{1-l} \binom{j}{l}
   S_n^{(j)} }{n!}\\ \times
\left(e^{-2 i m \pi } \text{Li}_l\left(e^{2 i m \pi }\right) \left(\log (2 i \pi ) (1-l)_l+\sum _{p=1}^l
   (-1)^{l+p} (1-l)^{p-1} p S_l^{(p)}\right)\right. \\ \left.
-(1-l)_l \Phi'\left(e^{2 i m \pi
   },l,1\right)\right)\\
-\sum _{j=0}^n \sum _{l=0}^j \frac{(-1)^{-j+l-s} i^{-l} e^{i \pi 
   \left(-\frac{1}{2}-n+s\right)} (2 \pi )^{1-l} (1-s)^{j-l} \binom{j}{l} S_n^{(j)} }{n!}\\ \times
\left(e^{-2 i \pi  s}
   \text{Li}_l\left(e^{2 i \pi  s}\right) \left(\log (2 i \pi ) (1-l)_l+\sum _{p=1}^l (-1)^{l+p} (1-l)^{p-1} p
   S_l^{(p)}\right)\right. \\ \left.
-(1-l)_l \Phi'\left(e^{2 i \pi  s},l,1\right)\right)
\end{multline}
where $Re(s)>0,Re(m)>0$, where there exists a singularity at $x=i$.
\end{example}
\begin{example}
In this example we use equation (\ref{eq:thm2}) and take the first partial derivative with respect to $k$ and set $k=-1$ and simplify. When $0< Re(a) < 1/2, Re(b)>0$ then singularity exists at $x=1/a$, when $Re(a)<0, Re(b)<0$ then singularity exists at $x=i/a$, when $Re(a)>0$ and $Re(b)<0$ then singularity exists at $x=-i/a$.
\begin{multline}\label{eq:malm_la}
\int_0^{\infty } \frac{x^{-1+m}  }{\left(1+b x^v\right)^{n+1}\log (a x)}\log (\log (a x)) \, dx\\
=\sum _{j=0}^n
   \sum _{l=0}^j \frac{(-1)^{-j} b^{-\frac{m}{v}} e^{\frac{i \pi  \left(m-\left(\frac{1}{2}+n\right) v\right)}{v}} (2
   \pi )^{-l} \left(1-\frac{m}{v}\right)^{j-l} \left(-\frac{1}{v}\right)^l \left(\frac{i}{v}\right)^{-1-l} \binom{j}{l}
   S_n^{(j)} }{v n!}\\ \times
\left(\Phi \left(e^{\frac{2 i m \pi }{v}},1+l,\frac{\pi -i v \log \left(a b^{-1/v}\right)}{2 \pi }\right)
   \left(\log \left(\frac{2 i \pi }{v}\right) (-1)^l l!+\sum _{p=1}^l (-1)^{l+p} (-l)^{-1+p} p S_l^{(p)}\right)\right. \\ \left.
-(-1)^l
   l! \Phi'\left(e^{\frac{2 i m \pi }{v}},1+l,\frac{\pi -i v \log \left(a b^{-1/v}\right)}{2
   \pi }\right)\right)
\end{multline}
where $Re(m)>0$.
\end{example}
\begin{example}
In this example we use equation (\ref{eq:malm_la}) and form a second equation by replacing $m\to s$ and take their difference and simplify;
\begin{multline}\label{eq:malm_la1}
\int_0^{\infty } \frac{\left(x^{s-1}-x^{m-1}\right) \log (\log (a x))}{\left(1+b x^v\right)^{n+1} \log (a x)} \,
   dx\\
=-\sum _{j=0}^n \sum _{l=0}^j \frac{(-1)^{-j} b^{-\frac{m}{v}} e^{\frac{i \pi  \left(m-\left(\frac{1}{2}+n\right)
   v\right)}{v}} (2 \pi )^{-l} \left(1-\frac{m}{v}\right)^{j-l} \left(-\frac{1}{v}\right)^l
   \left(\frac{i}{v}\right)^{-1-l} \binom{j}{l} S_n^{(j)} }{v n!}\\ \times
\left(\Phi \left(e^{\frac{2 i m \pi }{v}},1+l,\frac{\pi -i v\log \left(a b^{-1/v}\right)}{2 \pi }\right) \left((-1)^l l! \log \left(\frac{2 i \pi }{v}\right)+\sum _{p=1}^l(-1)^{l+p} (-l)^{-1+p} p S_l^{(p)}\right)\right. \\ \left.
-(-1)^l l! \Phi'\left(e^{\frac{2 i m \pi}{v}},1+l,\frac{\pi -i v \log \left(a b^{-1/v}\right)}{2 \pi }\right)\right)\\
+\sum _{j=0}^n \sum _{l=0}^j\frac{(-1)^{-j} b^{-\frac{s}{v}} e^{\frac{i \pi  \left(s-\left(\frac{1}{2}+n\right) v\right)}{v}} (2 \pi )^{-l}\left(1-\frac{s}{v}\right)^{j-l} \left(-\frac{1}{v}\right)^l \left(\frac{i}{v}\right)^{-1-l} \binom{j}{l} S_n^{(j)}}{v n!}\\ \times
 \left(\Phi \left(e^{\frac{2 i \pi  s}{v}},1+l,\frac{\pi -i v \log \left(a b^{-1/v}\right)}{2 \pi }\right)
   \left((-1)^l l! \log \left(\frac{2 i \pi }{v}\right)+\sum _{p=1}^l (-1)^{l+p} (-l)^{-1+p} p S_l^{(p)}\right)\right. \\ \left.
-(-1)^l
   l! \Phi'\left(e^{\frac{2 i \pi  s}{v}},1+l,\frac{\pi -i v \log \left(a b^{-1/v}\right)}{2
   \pi }\right)\right)
\end{multline}
where $Re(s)>0,Re(m)>0$. There exists a singularity at $x=1/a$.
\end{example}
\begin{example}
In this example we use equation (\ref{eq:malm_la1}) and set $a=b=1$ and simplify;
\begin{multline}
\int_0^{\infty } \frac{\left(-x^{-1+m}+x^{-1+s}\right) \log (\log (x))}{\left(1+x^v\right)^{n+1} \log (x)} \,
   dx\\
=-\sum _{j=0}^n \sum _{l=0}^j \frac{(-1)^{-j} e^{\frac{i \pi  \left(m-\left(\frac{1}{2}+n\right) v\right)}{v}} (2 \pi)^{-l} \left(1-\frac{m}{v}\right)^{j-l} \left(-\frac{1}{v}\right)^l \left(\frac{i}{v}\right)^{-1-l} \binom{j}{l}
   S_n^{(j)} }{v n!}\\ \times
\left(\Phi \left(e^{\frac{2 i m \pi }{v}},1+l,\frac{1}{2}\right) \left((-1)^l l! \log \left(\frac{2 i \pi
   }{v}\right)+\sum _{p=1}^l (-1)^{l+p} (-l)^{-1+p} p S_l^{(p)}\right)\right. \\ \left.
-(-1)^l l!
   \Phi'\left(e^{\frac{2 i m \pi }{v}},1+l,\frac{1}{2}\right)\right)\\
+\sum _{j=0}^n \sum
   _{l=0}^j \frac{(-1)^{-j} e^{\frac{i \pi  \left(s-\left(\frac{1}{2}+n\right) v\right)}{v}} (2 \pi )^{-l}
   \left(1-\frac{s}{v}\right)^{j-l} \left(-\frac{1}{v}\right)^l \left(\frac{i}{v}\right)^{-1-l} \binom{j}{l} S_n^{(j)}
   }{v n!}\\ \times
\left(\Phi \left(e^{\frac{2 i \pi  s}{v}},1+l,\frac{1}{2}\right) \left((-1)^l l! \log \left(\frac{2 i \pi
   }{v}\right)+\sum _{p=1}^l (-1)^{l+p} (-l)^{-1+p} p S_l^{(p)}\right)\right. \\ \left.
-(-1)^l l!
   \Phi'\left(e^{\frac{2 i \pi  s}{v}},1+l,\frac{1}{2}\right)\right)
\end{multline}
where $0 < Re(m) < Re(s) < n; n = 1,2,.., |\arg(-a)|<\pi$.
\end{example}
\begin{example}
Here we use equation (\ref{eq:malm1}) and set $a\to1, b\to1, v\to3, n\to n-1, m\to1$ and simplify;
\begin{multline}\label{eq:malm1_ex}
\int_0^{\infty } \frac{\log (\log (x))}{\left(1+x^3\right)^n} \, dx
=\sum _{j=0}^{-1+n} \sum _{l=0}^j
   \frac{(-1)^{-j+l} i^{-l} 2^{1+j-2 l} 3^{-1-j+l} e^{\frac{1}{3} i \left(1-3 \left(-\frac{1}{2}+n\right)\right) \pi
   } \pi ^{1-l} \binom{j}{l} S_{-1+n}^{(j)} }{(-1+n)!}\\ \times
\left(\Phi \left(e^{\frac{2 i \pi }{3}},l,\frac{1}{2}\right) \left(\log
   \left(\frac{2 i \pi }{3}\right) (1-l)_l+\sum _{p=1}^l (-1)^{l+p} (1-l)^{-1+p} p S_l^{(p)}\right)\right. \\ \left.
-(1-l)_l
   \Phi'\left(e^{\frac{2 i \pi }{3}},l,\frac{1}{2}\right)\right)
\end{multline}
where there exists a singularity at $x=1$.
\end{example}
\begin{example}
Here we use equation (\ref{eq:malm1_ex}) and set $n=3$ and simplify;
\begin{equation}
\int_0^{\infty } \frac{\log (\log (x))}{1+x^3} \, dx=\frac{2}{3} e^{-\frac{1}{6} (i \pi )} \pi 
   \left(\frac{\log \left(\frac{2 i \pi }{3}\right)}{1-e^{\frac{2 i \pi
   }{3}}}-\Phi'\left(e^{\frac{2 i \pi }{3}},0,\frac{1}{2}\right)\right)
\end{equation}
where there exists a singularity at $x=1$.
\end{example}
\begin{example}
Here we use equation (\ref{eq:malm1}) and set $a\to1, b\to1, v\to4, n\to 1, m\to1$ and simplify;
\begin{multline}
\int_0^{\infty } \frac{\log (\log (x))}{\left(1+x^4\right)^2} \, dx=\frac{3 i \pi ^2+(8+8 i) \,
   _2F_1\left(\frac{1}{2},1;\frac{3}{2};i\right)+6 \pi  \left(\log \left(\frac{\pi }{2}\right)-(1-i)
   \Phi'\left(i,0,\frac{1}{2}\right)\right)}{16 \sqrt{2}}
\end{multline}
where there exists a singularity at $x=1$.
\end{example}
\begin{example}
Here we use equation (\ref{eq:malm1}) and set $a\to1, b\to1, v\to 2, n\to 3, m\to1$ and simplify; 
\begin{equation}
\int_0^{\infty } \frac{\log (\log (x))}{\left(1+x^2\right)^4} \, dx=\frac{3 C}{4 \pi }+\frac{1}{96} \pi 
   \left(22 i+15 \log \left(\frac{2 i \pi  \Gamma \left(-\frac{1}{4}\right)^2}{9 \Gamma
   \left(-\frac{3}{4}\right)^2}\right)\right)
\end{equation}
where there exists a singularity at $x=1$.
\end{example}
\begin{example}
Here we use equation (\ref{eq:malm1}) and set $a\to 2, b\to 4, v\to 2, n\to 2, m\to1$ and simplify; 
\begin{multline}
\int_0^{\infty } \frac{\log (\log (2 x))}{\left(1+4 x^2\right)^3} \, dx=\frac{C}{4 \pi }+\frac{i \pi
   }{8}+\frac{3}{16} \pi  \left(\frac{1}{2} \log (i \pi
   )-\Phi'\left(-1,0,\frac{1}{2}\right)\right)
\end{multline}
where there exists a singularity at $x=1/2$.
\end{example}
\begin{example}
Here we use equation (\ref{eq:malm1}) and set $a\to 3, b\to 9, v\to 2, n\to 2, m\to1$ and simplify; 
\begin{equation}
\int_0^{\infty } \frac{\log (\log (3 x))}{\left(1+9 x^2\right)^3} \, dx=\frac{C}{6 \pi }+\frac{i \pi
   }{12}+\frac{1}{8} \pi  \left(\frac{1}{2} \log (i \pi
   )-\Phi'\left(-1,0,\frac{1}{2}\right)\right)
\end{equation}
where there exists a singularity at $x=1/3$.
\end{example}
\begin{example}
Here we use equation (\ref{eq:malm1}) and set $a\to 4, b\to 16, v\to 2, n\to 2, m\to1$ and simplify; 
\begin{multline}
\int_0^{\infty } \frac{\log (\log (4 x))}{\left(1+16 x^2\right)^3} \, dx=\frac{C}{8 \pi }+\frac{i \pi
   }{16}+\frac{3}{32} \pi  \left(\frac{1}{2} \log (i \pi
   )-\Phi'\left(-1,0,\frac{1}{2}\right)\right)
\end{multline}
where there exists a singularity at $x=1/4$.
\end{example}
\begin{example}
Here we use equation (\ref{eq:malm1}) and set $a\to2, b\to8, v\to 3, n\to 2, m\to 3/2$ and simplify; 
\begin{multline}
\int_0^{\infty } \frac{\sqrt{x} \log (\log (2 x))}{\left(1+8 x^3\right)^3} \, dx=\frac{C}{6 \sqrt{2} \pi
   }+\frac{i \pi }{12 \sqrt{2}}+\frac{\pi  \left(\frac{1}{2} \log \left(\frac{2 i \pi
   }{3}\right)-\Phi'\left(-1,0,\frac{1}{2}\right)\right)}{8 \sqrt{2}}
\end{multline}
where there exists a singularity at $x=1/2$.
\end{example}
%
%
%
\section{Derivations of formula in known volumes}
In this section we derive integrals in current literature in terms of finite series.
\begin{example}
Derivation of an alternate form for equation (2.1.2.4) in \cite{brychkov_m} in terms of the finite series involving the Hurwitz-Lerch zeta function.  This is an example of being able to evaluate the finite series over a wider range of values without having to consider the singularity of the Mellin transform. In this example we use equation (\ref{eq:thm2}) and set $k\to 0,v\to 1,b\to -\frac{1}{a},n\to n-1,m\to s$ and simplify;
\begin{multline}\label{eq:6.15}
\int_0^{\infty } \frac{x^{-1+s}}{(a-x)^n} \, dx
=-\sum _{j=0}^n \sum _{l=0}^j \frac{(-1)^{-j+l} i^{-l}
   \left(-\frac{1}{a}\right)^{-s} a^{-n} e^{i \pi  \left(\frac{1}{2}-n+s\right)} (2 \pi )^{1-l} (1-s)^{j-l}
   \binom{j}{l} }{(-1+n)!}\\ \times
\Phi \left(e^{2 i \pi  s},l,\frac{\pi -i \log \left(-a^2\right)}{2 \pi }\right) (1-l)_l
   S_{-1+n}^{(j)}\\
=-\frac{\left(\pi  (-a)^{s-n}\right) \left((1-n+s) (2-n+s)_{-1+n}\right)}{((n-1)! \sin (s
   \pi )) s}
\end{multline}
where $0 < Re(s) < n; n = 1,2,.., |\arg(-a)|<\pi$.
\end{example}
\begin{example}
Series representation for equation (2.1.2.4) in \cite{brychkov_m}. In this example we equate the right-hand sides of equation (\ref{eq:6.15}) and simplify;
\begin{multline}
\sum _{j=0}^n \sum _{l=0}^j (-1)^{-j+l} i^{-l} \left(-\frac{1}{a}\right)^{-s} a^{-n} e^{i \pi  \left(\frac{1}{2}-n+s\right)}
   (2 \pi )^{1-l} (1-s)^{j-l} \binom{j}{l}\\
 \Phi \left(e^{2 i \pi  s},l,\frac{\pi -i \log \left(-a^2\right)}{2 \pi }\right) (1-l)_l
   S_{-1+n}^{(j)}\\
=-\frac{\left(\pi  (-a)^{s-n}\right) \left((1-n+s) (2-n+s)_{-1+n}\right)}{\sin (s \pi ) s}
\end{multline}
where $0 < Re(s) < n; n = 1,2,.., |\arg(-a)|<\pi$.
\end{example}
\begin{example}
Derivation of an alternate form of equation (2.6.4.8) in \cite{prud1}. In this example we use equation (\ref{eq:thm2}) and set $k\to 1,a\to 1,m\to \alpha ,v\to \mu ,b\to z^{-\mu },n\to m-1$ and simplify;
\begin{multline}
\int_0^{\infty } \frac{x^{-1+\alpha } \log (x)}{\left(z^{\mu }+x^{\mu }\right)^m} \, dx\\
=\sum _{j=0}^{m-1} \sum
   _{l=0}^j \frac{z^{-m \mu } (-1)^{-j} e^{\frac{i \pi  \left(\alpha -\left(-\frac{1}{2}+m\right) \mu \right)}{\mu }}
   (2 \pi )^{2-l} \left(z^{-\mu }\right)^{-\frac{\alpha }{\mu }} \left(1-\frac{\alpha }{\mu }\right)^{j-l}
   \left(-\frac{1}{\mu }\right)^l \left(\frac{i}{\mu }\right)^{1-l} \binom{j}{l} }{\mu  (-1+m)!}\\ \times
\Phi \left(e^{\frac{2 i \pi  \alpha
   }{\mu }},-1+l,\frac{\pi -i \mu  \log \left(\left(z^{-\mu }\right)^{-1/\mu }\right)}{2 \pi }\right) (2-l)_l
   S_{-1+m}^{(j)}\\
=\frac{\left(z^{\alpha -\mu  m} \left(1-\frac{\alpha }{\mu }\right)_{m-1} \pi \right)
  }{(m-1)! \mu ^2 \sin \left(\frac{\alpha  \pi }{\mu }\right)} \left(\mu  \log (z)-\sum _{k=1}^{m-1} \frac{\mu }{k \mu -\alpha }-\pi  \cot \left(\frac{\alpha  \pi }{\mu
   }\right)\right)
\end{multline}
where $Re(\mu)>0, \mu |\arg z|< \pi, 0< Re(\alpha)<\mu m, m=1,2,3,..,$.
\end{example}
\begin{example}
Series form for equation (2.6.4.8) in \cite{prud1}. 
\begin{multline}
\sum _{j=0}^{m-1} \sum _{l=0}^j (-1)^{-j} e^{\frac{i \pi  \left(\alpha -\left(-\frac{1}{2}+m\right) \mu
   \right)}{\mu }} (2 \pi )^{2-l} (-1)^l i^{1-l} \mu ^{-j+l} (\mu -\alpha )^{j-l} \binom{j}{l}\\
 \Phi \left(e^{\frac{2 i
   \pi  \alpha }{\mu }},-1+l,\frac{\pi -i \mu  \log \left(\left(z^{-\mu }\right)^{-1/\mu }\right)}{2 \pi }\right)
   (2-l)_l S_{m-1}^{(j)}\\
=\frac{\left(1-\frac{\alpha }{\mu }\right)_{m-1} \pi \left(\mu  \log (z)-\sum
   _{k=1}^{m-1} \frac{\mu }{k \mu -\alpha }-\pi  \cot \left(\frac{\alpha  \pi }{\mu }\right)\right)}{\sin
   \left(\frac{\alpha  \pi }{\mu }\right)}
\end{multline}
where $Re(\mu)>0,0< Re(\alpha)<\mu m, \mu |\arg z|<\pi$.
\end{example}
\begin{example}
Derivation of equation (3.194.4(11)) in \cite{grad} in terms of the finite series involving the Hurwitz-Lerch zeta function. When $Re(b) < 0$ then the singularity exists at $x--i$. In this equation we use equation (\ref{eq:thm2}) and set $k\to 0,v\to 1,a\to 1,m\to a$ and simplify;
\begin{multline}
\int_0^{\infty } \frac{x^{-1+a}}{(1+b x)^{n+1}} \, dx\\
=\sum _{j=0}^n \sum _{l=0}^j \frac{(-1)^{-j+l} i^{-l}
   (1-a)^{j-l} b^{-a} e^{i \left(-\frac{1}{2}+a-n\right) \pi } (2 \pi )^{1-l} \binom{j}{l} }{n!}\\ \times
   \Phi \left(e^{2 i a \pi
   },l,\frac{\pi -i \log \left(\frac{1}{b}\right)}{2 \pi }\right) (1-l)_l S_n^{(j)}\\
=\frac{(-1)^n \pi 
   \binom{a-1}{n} \csc (a \pi )}{b^a}
\end{multline}
where $0 < Re(a) < n+1; n = 1,2,.., |\arg(b)|<\pi$.
\end{example}
\begin{example}
Derivation of the series form for equation (3.194.4(11)) in \cite{grad}.
\begin{multline}
\sum _{j=0}^n \sum _{l=0}^j \frac{(-1)^{-j+l} i^{-l} (1-a)^{j-l} e^{i \left(-\frac{1}{2}+a-n\right) \pi } (2 \pi
   )^{1-l} \binom{j}{l} }{n!}\\ \times
   \Phi \left(e^{2 i a \pi },l,\frac{\pi -i \log \left(\frac{1}{b}\right)}{2 \pi }\right) (1-l)_l
   S_n^{(j)}\\
=(-1)^n \pi  \binom{a-1}{n} \csc (a \pi )
\end{multline}
where $Re(a)>0, n=1,2,3,..,$.
\end{example}
\begin{example}
Derivation of entry (4.267.22) in \cite{grad}. In this example we use equation (eq:thm2) and set $k\to -1,a\to 1,b\to 1,v\to 2 (2 n+1),n\to 0,j\to 0,l\to 0$. Then we form four equations by replacing $m\to, p, m\to p+2, m\to q, m\to q+2$. Then we add the first and second equations and take the difference of the third and fourth and simplify;
\begin{multline}
\int_0^{\infty } \frac{\left(1+x^2\right) \left(x^{p-1}-x^{q-1}\right)}{\left(1+x^{2+4 n}\right) \log (x)} \,
   dx\\
=-2 \left(\tanh ^{-1}\left(e^{\frac{i p \pi }{2+4 n}}\right)+\tanh ^{-1}\left(e^{\frac{i (2+p) \pi }{2+4
   n}}\right)-\tanh ^{-1}\left(e^{\frac{i \pi  q}{2+4 n}}\right)-\tanh ^{-1}\left(e^{\frac{i \pi  (2+q)}{2+4
   n}}\right)\right)\\
=\log \left(\tan \left(\frac{p \pi }{4 (2 n+1)}\right) \tan \left(\frac{(p+2) \pi }{4 (2
   n+1)}\right) \cot \left(\frac{q \pi }{4 (2 n+1)}\right) \cot \left(\frac{(q+2) \pi }{4 (2
   n+1)}\right)\right)
\end{multline}
where $Re(p)>0,Re(q)>0$.
\end{example}
\begin{example}
Derivation of entry (4.267.23) in \cite{grad}. In this example we use equation (eq:thm2) and set $a\to 1,n\to 0,j\to 0,l\to 0,m\to m+1,b\to -1,v\to 2 n$. Then we simplify the Hurwitz-Lerch zeta function by using the formula $\text{Li}_{-s}(z)=\Phi (z,-s,0)$ where the $Re(s)>0$. Next we form four equations by replacing $m\to p, m\to p+2, m\to q, m\to q+2$. Then we add the first and fourth and take the difference of the second and third and simplify.
\begin{multline}
\int_0^{\infty } \frac{\left(-1+x^2\right) \left(x^p-x^q\right) \log ^k(x)}{-1+x^{2 n}} \,
   dx\\
=\frac{1}{n^2}\left(\frac{i}{n}\right)^{-1+k} \pi ^{1+k} \left(\text{Li}_{-k}\left(e^{\frac{i (1+p) \pi
   }{n}}\right)-\text{Li}_{-k}\left(e^{\frac{i (3+p) \pi }{n}}\right)\right. \\ \left.
   -\text{Li}_{-k}\left(e^{\frac{i \pi 
   (1+q)}{n}}\right)+\text{Li}_{-k}\left(e^{\frac{i \pi  (3+q)}{n}}\right)\right)
\end{multline}
where $Re(p)>0,Re(q)>0$.
Then we set $k\to -1$ and simplify;
\begin{multline}
\int_0^{\infty } \frac{\left(1-x^2\right) \left(x^{-1+p}-x^{-1+q}\right)}{\left(1-x^{2 n}\right) \log (x)} \,
   dx\\
=\log \left(\csc \left(\frac{(2+p) \pi }{2 n}\right) \csc \left(\frac{\pi  q}{2 n}\right) \sin \left(\frac{p \pi
   }{2 n}\right) \sin \left(\frac{\pi  (2+q)}{2 n}\right)\right)
\end{multline}
where $Re(p)>0,Re(q)>0$.
\end{example}
\begin{example}
Derivation of entry (4.267.30) in \cite{grad}. In this example we use equation (eq:thm2) and set $a\to 1,n\to 0,j\to 0,l\to 0,m\to m+1,b\to -1,v\to p+q+2 s$ and simplify the Hurwitz-Lerch zeta function using $\text{Li}_{-s}(z)=\Phi (z,-s,0)$ to get;
\begin{equation}
\int_0^{\infty } \frac{x^m \log ^k(x)}{1-x^{p+q+2 s}} \, dx=\frac{(2 \pi )^{1+k} \left(\frac{i}{p+q+2
   s}\right)^{-1+k} \text{Li}_{-k}\left(e^{\frac{2 i (1+m) \pi }{p+q+2 s}}\right)}{(p+q+2 s)^2}
\end{equation}
where $Re(p)>0,Re(q)>0,Re(m)>0$.
Then we form four equations by replacing $m\to s-1, m\to p+s-1, m\to q+s-1, m\to p+q+s-1$. Then we add the first and the fourth then take its difference from the second and third equations and simplify;
\begin{equation}
\int_0^{\infty } \frac{x^{-1+s} \left(1-x^p\right) \left(1-x^q\right)}{\left(1-x^{p+q+2 s}\right) \log (x)} \,
   dx=\log \left(\frac{1+\cos \left(\frac{\pi  (p+q)}{p+q+2 s}\right)}{1+\cos \left(\frac{\pi  (p-q)}{p+q+2
   s}\right)}\right)
\end{equation}
where $Re(p)>0,Re(q)>0,Re(m)>0$.
\end{example}
\begin{example}
Generalized Gr\"{o}bner integral form given by entry (4.267.30) in \cite{grad}. In this example we use equation (eq:thm2) and set $a\to 1, b\to -1$ and simplify the Hurwitz-Lerch zeta function using $\text{Li}_{-s}(z)=\Phi (z,-s,0)$ to get;
\begin{multline}
\int_0^{\infty } x^{-1+m} \left(1-x^v\right)^{-1-n} \log ^k(x) \, dx\\
=\sum _{j=0}^n \sum _{l=0}^j
   \frac{(-1)^{-j-\frac{m}{v}} e^{\frac{i \pi  \left(m-\left(\frac{1}{2}+n\right) v\right)}{v}} (2 \pi )^{1+k-l}
   \left(1-\frac{m}{v}\right)^{j-l}  }{v n!}\\ \times
\left(-\frac{1}{v}\right)^l \left(\frac{i}{v}\right)^{k-l} \binom{j}{l}
   \text{Li}_{-k+l}\left(e^{\frac{2 i m \pi }{v}}\right)(1+k-l)_l S_n^{(j)}
\end{multline}
where $Re(p)>0,Re(q)>0,Re(m)>0$. Then we set $k=-1$ and form four equations by replacing $m\to p, m\to q, m\to p+s, m\to q+s$. Next we add the first and fourth equations then take its difference from the second and third equations and simplify;
\begin{multline}
\int_0^{\infty } \frac{\left(x^{p-1}-x^{q-1}\right) \left(1-x^s\right)}{\left(1-x^v\right)^{n+1} \log (x)} \,
   dx\\
=\sum _{j=0}^n \sum _{l=0}^j \frac{i (-1)^{-j+l-\frac{p}{v}-\frac{q}{v}-\frac{s}{v}} 
   \left(1-\frac{p}{v}\right)^{-l} \left(1-\frac{q}{v}\right)^{-l} \left(1-\frac{p+s}{v}\right)^{-l}
   \left(1-\frac{q+s}{v}\right)^{-l} \left(-\frac{1}{v}\right)^l \left(\frac{i}{v}\right)^{-l} 
   }{(2 \pi )^{l}n!}\\ \times
\binom{j}{l} l!\left(-(-1)^{\frac{q}{v}+\frac{s}{v}} e^{\frac{i \pi  \left(p-\left(\frac{1}{2}+n\right) v\right)}{v}}
   \left(1-\frac{p}{v}\right)^j \left(1-\frac{q}{v}\right)^l \left(1-\frac{p+s}{v}\right)^l
   \left(1-\frac{q+s}{v}\right)^l \text{Li}_{1+l}\left(e^{\frac{2 i p \pi }{v}}\right)\right. \\ \left.
+(-1)^{\frac{p}{v}+\frac{s}{v}}
   e^{\frac{i \pi  \left(q-\left(\frac{1}{2}+n\right) v\right)}{v}} \left(1-\frac{p}{v}\right)^l
   \left(1-\frac{q}{v}\right)^j \left(1-\frac{p+s}{v}\right)^l \left(1-\frac{q+s}{v}\right)^l
   \text{Li}_{1+l}\left(e^{\frac{2 i \pi  q}{v}}\right)\right. \\ \left.
+(-1)^{q/v} e^{\frac{i \pi 
   \left(p+s-\left(\frac{1}{2}+n\right) v\right)}{v}} \left(1-\frac{p}{v}\right)^l \left(1-\frac{q}{v}\right)^l
   \left(1-\frac{p+s}{v}\right)^j \left(1-\frac{q+s}{v}\right)^l \text{Li}_{1+l}\left(e^{\frac{2 i \pi 
   (p+s)}{v}}\right)\right. \\ \left.
-(-1)^{p/v} e^{\frac{i \pi  \left(q+s-\left(\frac{1}{2}+n\right) v\right)}{v}}
   \left(1-\frac{p}{v}\right)^l \left(1-\frac{q}{v}\right)^l \left(1-\frac{p+s}{v}\right)^l
   \left(1-\frac{q+s}{v}\right)^j \text{Li}_{1+l}\left(e^{\frac{2 i \pi  (q+s)}{v}}\right)\right)
   S_n^{(j)}
\end{multline}
where $Re(p)>0,Re(q)>0,Re(s)>0,Re(v)>0$ and there exists a singularity at $x=i$.
\end{example}
\begin{example}
In this example we use equation (\ref{eq:thm2}) and take the first partial derivative with respect to $k$ and set $k=-1$ and simplify;
\begin{multline}
\int_0^{\infty } \frac{x^{-1+m} \log (\log (x))}{\log (x) \left(b+c x^v\right){}_{1+n}} \, dx
=-\sum _{j=0}^n
   \frac{(-1)^j c^{-\frac{m}{v}} e^{\frac{i m \pi }{v}} (b+j)^{-1+\frac{m}{v}} \binom{n}{j} }{n!}\\
   \left(\Phi
   \left(e^{\frac{2 i m \pi }{v}},1,\frac{\pi -i v \log \left(c^{-1/v} (b+j)^{1/v}\right)}{2 \pi }\right) \log
   \left(\frac{2 i \pi }{v}\right)\right. \\ \left.
-\Phi'\left(e^{\frac{2 i m \pi }{v}},1,\frac{\pi -i v
   \log \left(c^{-1/v} (b+j)^{1/v}\right)}{2 \pi }\right)\right)
\end{multline}
where $Re(m)>0$ and there exists a singularity at $x=i$.
\end{example}
\begin{example}
In this example we use equation (\ref{eq:thm2}) and set $k\to -1,a\to 1,m\to m+1$, then form a second equation by replacing $m\to s$ and take there difference.
\begin{multline}\label{eq:poch_loginv_1}
\int_0^{\infty } \frac{-x^m+x^s}{\log (x) \left(b+c x^v\right){}_{1+n}} \, dx
=\sum _{j=0}^n \frac{i (-1)^j
   c^{-\frac{1+m+s}{v}} (b+j)^{-1+\frac{1}{v}} \binom{n}{j}}{n!}\\ \times
 \left(-c^{s/v} e^{\frac{i \pi  \left(1+m-\frac{3
   v}{2}\right)}{v}} (b+j)^{m/v} \Phi \left(e^{\frac{2 i (1+m) \pi }{v}},1,\frac{\pi -i v \log \left(c^{-1/v}
   (b+j)^{1/v}\right)}{2 \pi }\right)\right. \\ \left.
+c^{m/v} e^{\frac{i \pi  \left(1+s-\frac{3 v}{2}\right)}{v}} (b+j)^{s/v} \Phi
   \left(e^{\frac{2 i \pi  (1+s)}{v}},1,\frac{\pi -i v \log \left(c^{-1/v} (b+j)^{1/v}\right)}{2 \pi
   }\right)\right)
\end{multline}
where $Re(m)>0,Re(s)>0,Re(v)>0$ and there exists a singularity at $x=-i/2$ when $Re(c)>0$.
\end{example}
\begin{example}
In this example we use equation (\ref{eq:poch_loginv_1}) and set $c\to -1$ and simplify;
\begin{multline}
\int_0^{\infty } \frac{-x^m+x^s}{\log (x) \left(b-x^v\right){}_{1+n}} \, dx
=\sum _{j=0}^n \frac{i
   (-1)^{j-\frac{1+m+s}{v}} (b+j)^{-1+\frac{1}{v}} \binom{n}{j} }{n!}\\ \times
\left(-(-1)^{s/v} e^{\frac{i \pi  \left(1+m-\frac{3
   v}{2}\right)}{v}} (b+j)^{m/v} \Phi \left(e^{\frac{2 i (1+m) \pi }{v}},1,\frac{\pi -i v \log \left((-1)^{-1/v}
   (b+j)^{1/v}\right)}{2 \pi }\right)\right. \\ \left.
+(-1)^{m/v} e^{\frac{i \pi  \left(1+s-\frac{3 v}{2}\right)}{v}} (b+j)^{s/v} \Phi
   \left(e^{\frac{2 i \pi  (1+s)}{v}},1,\frac{\pi -i v \log \left((-1)^{-1/v} (b+j)^{1/v}\right)}{2 \pi
   }\right)\right)
\end{multline}
where $Re(m)>0,Re(s)>0,Re(v)>0,Re(b)\neq 1$ and there exists a singularity at $x=-i/2$.
\end{example}
\begin{example}
In this example we use equation (\ref{eq:thm2}) and set $k\to -1,a\to 1,c\to -1,m\to m+1$. Next we form four equations by replacing $m\to p, m\to q, m\to p+s, m\to q+s$. We add the first and fourth equations and take the difference of the second and third and simplify;
\begin{multline}
\int_0^{\infty } \frac{\left(x^p-x^q\right) \left(-1+x^s\right)}{\log (x) \left(b-x^v\right){}_{1+n}} \,
   dx
=\sum _{j=0}^n \frac{e^{i j \pi } (b+j)^{-1+\frac{1}{v}} \binom{n}{j} }{n!}\\ \times
\left((b+j)^{p/v} \Phi \left(e^{\frac{2 i
   (1+p) \pi }{v}},1,\frac{\log (b+j)}{2 \pi  i}\right)-(b+j)^{q/v} \Phi \left(e^{\frac{2 i \pi 
   (1+q)}{v}},1,\frac{\log (b+j)}{2 \pi  i}\right)\right. \\ \left.
-(b+j)^{\frac{p}{v}+\frac{s}{v}} \Phi \left(e^{\frac{2 i \pi 
   (1+p+s)}{v}},1,\frac{\log (b+j)}{2 \pi  i}\right)+(b+j)^{\frac{q}{v}+\frac{s}{v}} \Phi \left(e^{\frac{2 i \pi 
   (1+q+s)}{v}},1,\frac{\log (b+j)}{2 \pi  i}\right)\right)
\end{multline}
where $Re(p)>0,Re(q)>0,Re(s)>0,Re(v)>0,Re(b)\neq 1$ and there exists a singularity at $x=-i/2$.
\end{example}
\begin{example}
Infinite integral involving the  polylogarithm function. In this example we use equation (\ref{eq:thm1}) and expand the finite series over $j\in[0,0]+[1,n]$. Then set $a\to \left(-\frac{b}{c}\right)^{-1/v}$ and simplify using equation [Wolfram MathWorld,\href{https://mathworld.wolfram.com/LerchTranscendent.html}{(6)}];
\begin{multline}\label{eq:poly_ex1}
\int_0^{\infty } \frac{x^{-1+m} \log ^k\left(\left(-\frac{b}{c}\right)^{-1/v} x\right)}{\left(b+c
   x^v\right){}_{1+n}} \, dx\\
=\sum _{j=1}^n \frac{(-1)^j c^{-\frac{m}{v}} e^{\frac{i m \pi }{v}}
   (b+j)^{-1+\frac{m}{v}} (2 \pi )^{1+k} \left(\frac{i}{v}\right)^{-1+k} \binom{n}{j} }{v^2 n!}\\
+\frac{b^{-1+\frac{m}{v}} c^{-\frac{m}{v}} e^{\frac{i m \pi }{v}} (2 \pi )^{1+k}
   \left(\frac{i}{v}\right)^{-1+k} \text{Li}_{-k}\left(e^{\frac{2 i m \pi }{v}}\right)}{v^2 n!}\\ \times
\Phi \left(e^{\frac{2 i m \pi
   }{v}},-k,\frac{\pi -i v \log \left(\left(-\frac{b}{c}\right)^{-1/v} c^{-1/v} (b+j)^{1/v}\right)}{2 \pi
   }\right)
\end{multline}
where $Re(m)>0$, and there exists a singularity at $x=\frac{i e^{\left(-\frac{b}{c}\right)^{-1/v}}}{v-2}$.
\end{example}
\begin{example}
In this example we use equation (\ref{eq:poly_ex1}) and form a second equation by replacing $m\to s$ and take their difference and simplify;
\begin{multline}\label{eq:poly_ex2}
\int_0^{\infty } \frac{\left(-x^m+x^s\right) \log ^k\left(\left(-\frac{b}{c}\right)^{-1/v} x\right)}{\left(b+c
   x^v\right){}_{1+n}} \, dx
=\sum _{j=1}^n \frac{(-1)^j c^{-\frac{2+m+s}{v}} (2 \pi )^{1+k}
   \left(\frac{i}{v}\right)^{1+k} \binom{n}{j} }{(b+j)
   n!}\\ \times
\left(c^{\frac{1+s}{v}} e^{\frac{i (1+m) \pi }{v}}
   (b+j)^{\frac{1+m}{v}} \Phi \left(e^{\frac{2 i (1+m) \pi }{v}},-k,\frac{\pi -i v \log
   \left(\left(-\frac{b}{c}\right)^{-1/v} c^{-1/v} (b+j)^{1/v}\right)}{2 \pi }\right)\right. \\ \left.
-c^{\frac{1+m}{v}} e^{\frac{i
   \pi  (1+s)}{v}} (b+j)^{\frac{1+s}{v}} \Phi \left(e^{\frac{2 i \pi  (1+s)}{v}},-k,\frac{\pi -i v \log
   \left(\left(-\frac{b}{c}\right)^{-1/v} c^{-1/v} (b+j)^{1/v}\right)}{2 \pi }\right)\right)\\
+\frac{c^{-\frac{2+m+s}{v}} (2 \pi )^{1+k} \left(\frac{i}{v}\right)^{1+k} \left(b^{\frac{1+m}{v}}
   c^{\frac{1+s}{v}} e^{\frac{i (1+m) \pi }{v}} \text{Li}_{-k}\left(e^{\frac{2 i (1+m) \pi
   }{v}}\right)-b^{\frac{1+s}{v}} c^{\frac{1+m}{v}} e^{\frac{i \pi  (1+s)}{v}} \text{Li}_{-k}\left(e^{\frac{2 i \pi 
   (1+s)}{v}}\right)\right)}{b n!}
\end{multline}
where $Re(m)>-1,Re(s)>-1$, and there exists a singularity at $x=\frac{i e^{\left(-\frac{b}{c}\right)^{-1/v}}}{v-2}$.
\end{example}
\subsection{A few special case examples of equation (\ref{eq:poly_ex2})}
\begin{example}
When $k\to -1,b\to 1,c\to -1,v\to1, n\to1, m\to 1/2,s\to-1/2$,
\begin{equation}
\int_0^{\infty } \frac{\frac{1}{\sqrt{x}}-\sqrt{x}}{(1-x) (2-x) \log (x)} \, dx=-\frac{\Phi
   \left(-1,1,\frac{\pi -i (i \pi +\log (2))}{2 \pi }\right)}{\sqrt{2}}
\end{equation}
where there exists a singularity at $x=-ie$.
\end{example}
\begin{example}
When $k\to -1,b\to 1,c\to -1,v\to1, m\to 1/2,s\to-1/2$,
\begin{multline}
\int_0^{\infty } \frac{\frac{1}{\sqrt{x}}-\sqrt{x}}{\log (x) (1-x)_{1+n}} \, dx\\
=\sum _{j=1}^n \frac{(-1)^j
   \binom{n}{j} \left(-\sqrt{1+j} \Phi \left(-1,1,\frac{\pi -i \log (-1-j)}{2 \pi }\right)+(1+j)^{3/2} \Phi
   \left(-1,1,\frac{\pi -i \log (-1-j)}{2 \pi }\right)\right)}{(1+j) n!}
\end{multline}
where there exists a singularity at $x=-ie$.
\end{example}
\begin{example}
When $k\to -1,b\to 1,c\to -2,v\to1, m\to 1/2,s\to-1/2$,
\begin{multline}
\int_0^{\infty } \frac{\frac{1}{\sqrt{x}}-\sqrt{x}}{\log (2 x) (1-2 x)_{1+n}} \, dx
=\frac{\log (2)}{2 \sqrt{2}
   n!}\\
+\sum _{j=1}^n \frac{(-1)^j \binom{n}{j} }{4
   (1+j) n!}\left(-2 \sqrt{2} \sqrt{1+j} \Phi \left(-1,1,\frac{\pi -i \log
   (-1-j)}{2 \pi }\right)\right. \\ \left.
+\sqrt{2} (1+j)^{3/2} \Phi \left(-1,1,\frac{\pi -i \log (-1-j)}{2 \pi }\right)\right)
\end{multline}
where there exists a singularity at $x=-ie$.
\end{example}
\begin{example}
In this example we use equation (\ref{eq:thm1}) and take the first partial derivative with respect to $k$ then set $k=-1$ and simplify;
\begin{multline}\label{eq:poch_malm_ex1}
\int_0^{\infty } \frac{\left(-x^m+x^s\right) \log (\log (x))}{\log (x) \left(b+c x^v\right){}_{1+n}} \,
   dx\\
=-\sum _{j=0}^n \frac{i (-1)^j c^{-1/v} (b+j)^{-1+\frac{1}{v}} \binom{n}{j} }{n!}\left(c^{-\frac{m}{v}} e^{\frac{i
   \pi  \left(1+m-\frac{3 v}{2}\right)}{v}} (b+j)^{m/v} \right. \\ \left.
\left(\Phi \left(e^{\frac{2 i (1+m) \pi }{v}},1,\frac{\pi -i
   v \log \left(c^{-1/v} (b+j)^{1/v}\right)}{2 \pi }\right) \log \left(\frac{2 i \pi
   }{v}\right)\right.\right. \\ \left.\left.
-\Phi'\left(e^{\frac{2 i (1+m) \pi }{v}},1,\frac{\pi -i v \log
   \left(c^{-1/v} (b+j)^{1/v}\right)}{2 \pi }\right)\right)\right. \\ \left.
+c^{-\frac{s}{v}} e^{\frac{i \pi  \left(1+s-\frac{3
   v}{2}\right)}{v}} (b+j)^{s/v} \left(-\Phi \left(e^{\frac{2 i \pi  (1+s)}{v}},1,\frac{\pi -i v \log \left(c^{-1/v}
   (b+j)^{1/v}\right)}{2 \pi }\right) \log \left(\frac{2 i \pi
   }{v}\right)\right.\right. \\ \left.\left.
+\Phi'\left(e^{\frac{2 i \pi  (1+s)}{v}},1,\frac{\pi -i v \log
   \left(c^{-1/v} (b+j)^{1/v}\right)}{2 \pi }\right)\right)\right)
\end{multline}
where $Re(c)>0,Re(b)>0$, and there exists a singularity at $x=1$.
\end{example}
\begin{example}
In this example we use equation (\ref{eq:poch_malm_ex1}) and set $m\to \frac{1}{2},s\to -\frac{1}{2},b\to 1,c\to -1,v\to 1$ and simplify;
\begin{multline}
\int_0^{\infty } \frac{\left(\frac{1}{\sqrt{x}}-\sqrt{x}\right) \log (\log (x))}{\log (x) (1-x)_{1+n}} \,
   dx\\
=\sum _{j=0}^n \frac{i (-1)^j \binom{n}{j} }{n!}\left(-i \sqrt{1+j} \left(\Phi \left(-1,1,\frac{\pi -i \log (-1-j)}{2
   \pi }\right) \log (2 i \pi )\right.\right. \\ \left.\left.
-\Phi'\left(-1,1,\frac{\pi -i \log (-1-j)}{2 \pi
   }\right)\right)\right. \\ \left.
-\frac{i \left(-\Phi \left(-1,1,\frac{\pi -i \log (-1-j)}{2 \pi }\right) \log (2 i \pi
   )+\Phi'\left(-1,1,\frac{\pi -i \log (-1-j)}{2 \pi
   }\right)\right)}{\sqrt{1+j}}\right)
\end{multline}
where there exists a singularity at $x=-i$.
\end{example}
\begin{example}
The definite integral of the double logarithm function divided by a generalized rational function see \cite{malmsten}. In this example we use equation (\ref{eq:thm1}) and set $b\to 1,c\to 1,v\to 1,m\to 1,a\to 1$ and take the first partial derivative with respect to $k$ then set $k\to 0$ and simplify in terms of the Hurwitz zeta function using equation [Wolfram MathWorld, \href{https://mathworld.wolfram.com/HurwitzZetaFunction.html}{(1)}].  
\begin{multline}\label{eq:malm_poxh_1}
\int_0^{\infty } \frac{\log (\log (x))}{(1+x)_{1+n}} \, dx\\
=-\frac{1}{2 n!}\sum _{j=0}^n i^{2 j+1} \binom{n}{j} \left(\log (1+j) \left(-i \log \left(-4 \pi ^2\right)\right)+4 \pi \zeta'\left(0,\frac{\pi -i \log (1+j)}{2 \pi }\right)\right)
\end{multline}
where $n\geq 1$ and there exists a singularity at $x=1$.
\end{example}
\begin{example}
In this example we look at the plot of the integrand in equation (\ref{eq:malm_poxh_1}) and where the singularities exist along the $x$-axis over $n\in[0,3]$.
\begin{figure}[H]
\includegraphics[scale=0.6]{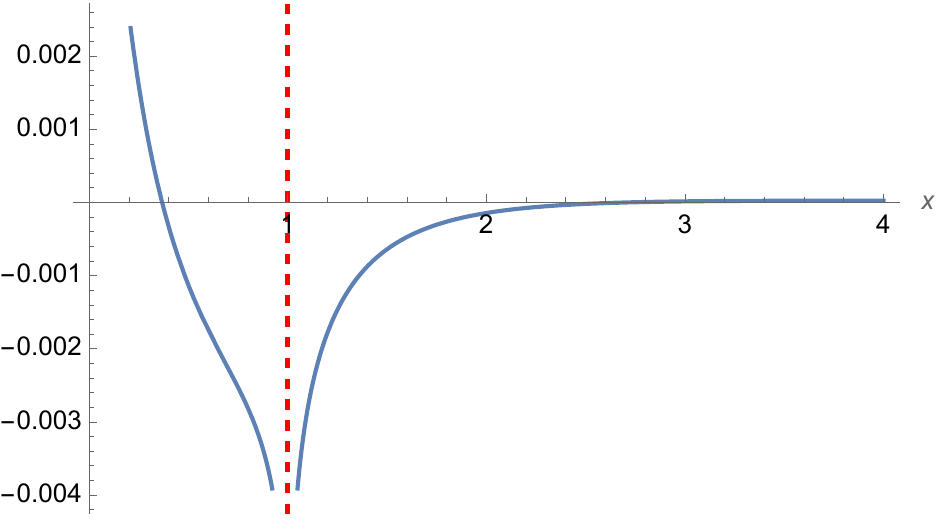}
\caption{Plot of the integrand in (\ref{eq:malm_poxh_1})}
   \label{fig:fig2}
\end{figure}
\end{example}
\begin{example}
Improper definite integral of the difference of powers function divided by the product of a logarithm and rising factorial. In this example we use equation (\ref{eq:thm1}) and set $b\to 1,c\to 1,v\to 1,a\to 1,m\to m+1$ then we form a second equation by replacing $m\to s$ and take their difference and simplify. Then we set $k\to -1,m\to 1/2, s\to -1/2$ and simplify. 
\begin{multline}
\int_0^{\infty } \frac{-x^m+x^s}{\log (x) (1+x)_{1+n}} \, dx
=-\sum _{j=0}^n \frac{(-1)^j \binom{n}{j} }{n!}\left(e^{i m \pi } (1+j)^m \Phi \left(e^{2 i m \pi },1,\frac{\pi -i \log(1+j)}{2 \pi }\right)\right. \\ \left.
-e^{i \pi  s} (1+j)^s \Phi \left(e^{2 i \pi  s},1,\frac{\pi -i \log (1+j)}{2 \pi }\right)\right)
\end{multline}
where $n\geq 1$ and there exists a singularity at $x=1$.
\end{example}
\begin{example}
Improper definite integral of a linear function divided by the product of a square root, logarithm, and rising factorial.  In this example we use equation (\ref{eq:thm1}) and set $b\to 1,c\to 1,v\to 1,a\to 1,m\to m+1$ then we form a second equation by replacing $m\to s$ and take their difference and simplify. Then we set $k\to -1,m\to 1/2, s\to -1/2$ and simplify in terms of the digamma function using equations [Wolfram Functions, \href{http://functions.wolfram.com/10.06.03.0072.01}{(1)}] and [Wolfram MathWorld, \href{https://mathworld.wolfram.com/HurwitzZetaFunction.html}{(20)}] then simplify.
\begin{multline}\label{eq:poch_sing_2}
\int_0^{\infty } \frac{1-x}{\sqrt{x} \log (x) (1+x)_{1+n}} \, dx\\
=\sum _{j=0}^n \frac{i (-1)^j (2+j) }{2 \sqrt{1+j} n!}\binom{n}{j} \left(\psi ^{(0)}\left(\frac{\pi -i \log (1+j)}{4 \pi }\right)-\psi
   ^{(0)}\left(\frac{3}{4}-\frac{i \log (1+j)}{4 \pi }\right)\right)
\end{multline}
where $n\geq 1$ and there exists a singularity at $x=1$.
\end{example}
\begin{example}
The improper integral involving a falling factorial, square root, and logarithm. This integrand is a complex function defined by the product of a rational expression involving a Pochhammer symbol, a square root, and a logarithmic term. The integrand has several singularities (poles) within the integration interval $[0,\infty)$; at $x=0$ from the $\frac{1}{\sqrt{x}}$ term, at $x=1$ from the $\frac{1}{\log(x)}$ term and at $x=2,3,..,n+1$ from the reciprocal Pochhammer term. These multiple singularities means the integral will require Cauchy principal value method for evaluation. In this example we use equation (\ref{eq:thm1}) and set $b\to 1,c\to -1,v\to 1,a\to 1,m\to m+1$ then we form a second equation by replacing $m\to s$ and take their difference and simplify. Then we set $k\to -1,m\to 1/2, s\to -1/2$ and simplify in terms of the digamma function using equations [Wolfram Functions, \href{http://functions.wolfram.com/10.06.03.0072.01}{(1)}] and [Wolfram MathWorld, \href{https://mathworld.wolfram.com/HurwitzZetaFunction.html}{(20)}] then simplify.
\begin{multline}\label{eq:poch_sing_1}
\int_0^{\infty } \frac{1-x}{ (1-x)_{1+n}} \, \frac{dx}{\sqrt{x} \log (x)}\\
=\frac{1 }{2  n!}\sum _{j=0}^n \frac{(-1)^j j }{ \sqrt{1+j} }\binom{n}{j} \left(\psi ^{(0)}\left(\frac{1}{2}+\frac{\log (1+j)}{4 \pi i}\right)-\psi ^{(0)}\left(1+\frac{\log (1+j)}{4 \pi  i}\right)\right)
\end{multline}
where $n\geq 1$ and there exists a singularity at $x=i$.
\end{example}
\begin{example}
In this example we look at the plot of the integrand in equation (\ref{eq:poch_sing_1}). We se the many singularities along the $x$-axis which lead to the Cauchy principal value method of evaluating the integral. The is of the function $ \frac{1-x}{\sqrt{x} \log (x) (1-x)_{1+n}}$ over the range $n\in[0,5]$.
\begin{figure}[H]
\includegraphics[scale=0.6]{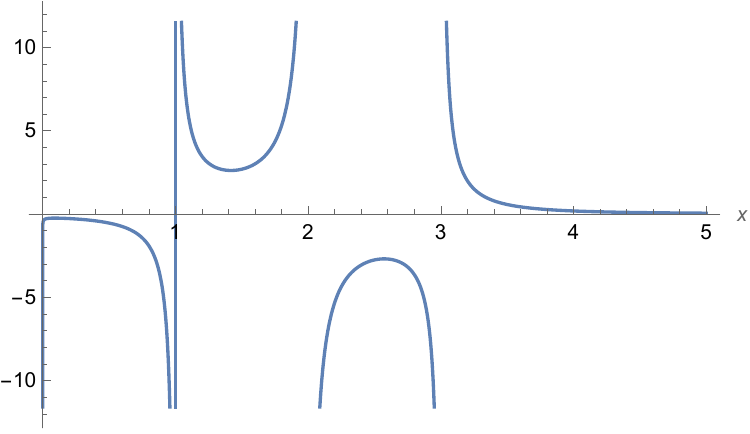}
\caption{Plot of the integrand in (\ref{eq:poch_sing_1})}
   \label{fig:fig2}
\end{figure}
\end{example}
\begin{example}
Definite integral involving iterated logarithm, square root, and Pochhammer symbol see \cite{malmsten}. In this example we use equation (\ref{eq:thm1}) and set $b\to 1,c\to -1,v\to 1,a\to 1,m\to m+1$ then we form a second equation by replacing $m\to s$ and take their difference and simplify. Then we take the first partial derivative with respect to $k$ and set $k\to -1,m\to 1/2, s\to -1/2$ and simplify.
\begin{multline}
\int_0^{\infty } \frac{(1-x) \log (\log (x))}{\sqrt{x} \log (x) (1-x)_{1+n}} \, dx\\
=-\sum _{j=0}^n \frac{e^{i j
   \pi } j \binom{n}{j} \left(\Phi \left(-1,1,\frac{\pi -i \log (-1-j)}{2 \pi }\right) (i \pi +2 \log (2 \pi ))-2
   \Phi'\left(-1,1,\frac{\pi -i \log (-1-j)}{2 \pi }\right)\right)}{2 \sqrt{1+j}
   n!}
\end{multline}
where $n\geq 1$ there exists a singularity at $x=i$.
\end{example}
\begin{example}
Extended Gr\"{o}bner integral form over integers. Table 324(11(b)) in \cite{grobner}. This is an example of why infinite integrals in terms finite series is useful. The Gr\"{o}bner form does not admit positive integers while the other form does allowing for iterative analysis of the definite integral. In this example we use equation (\ref{eq:thm2}) and set $m\to \frac{\alpha  m}{2},b\to 1,v\to \alpha ,k\to 2 n,n\to m-1,a\to 1$ and simplify;
\begin{multline}
\int_0^{\infty } \frac{x^{-1+\frac{m \alpha }{2}} \log ^{2 n}(x)}{\left(1+x^{\alpha }\right)^m} \, dx\\
=\begin{cases}
			\sum\limits_{j=0}^m \sum \limits_{l=0}^j \frac{(-1)^{-j} e^{\frac{i \pi  \left(-\left(\left(-\frac{1}{2}+m\right) \alpha\right)+\frac{m \alpha }{2}\right)}{\alpha }} \left(1-\frac{m}{2}\right)^{j-l} (2 \pi )^{1-l+2 n}\left(-\frac{1}{\alpha }\right)^l  }{\alpha  (-1+m)!}\\ \times
\left(\frac{i}{\alpha }\right)^{-l+2 n} \binom{j}{l}\Phi \left(e^{i m \pi },l-2
   n,\frac{1}{2}\right) (1-l+2 n)_l S_{-1+m}^{(j)}, &\text{$m\in\mathbb{Z_{+}}$}\\\\
            2(2n)!\left(\frac{2}{\alpha}\right)^{2n+1}\sum\limits_{v=0}^{\infty}\binom{-m}{v}\frac{1}{(m+2v)^{2n+1}}, & \text{otherwise}
		 \end{cases}
\end{multline}
where $Re(\alpha)>0,0 < m < 2n+2, n=0,1,2,..,$, and there exists a singularity at $x=1$.
\end{example}
\section{Schr\"{o}der's integral formula and Gregory coefficients}
In this section we study a special case of equation  (\ref{eq:thm2}) and evaluate definite integrals. We also derive a closed form solution for Schr\"{o}der's integral formula \cite{schroder} in terms of Gregory coefficients for $G_{n}$ where $n\geq 2$. 
\subsection{Ernst Schröder }
Ernst Schr\"{o}der (1841–1902), a German mathematician born in Mannheim, made contributions to various fields, including set theory, functional equations, finite differences, and algebraic logic.  His most famous work, the Schr\"{o}der–Bernstein theorem (1896) \cite{halmos}, established a cornerstone of modern set theory. It states that if there are injections between two sets in both directions, the sets have equal cardinality.  Schr\"{o}der achieved this using iterative chain arguments.

In 1870, he introduced the Schr\"{o}der functional equation \cite{schroder_fe} to study function iteration. This allows one to construct fractional and continuous iterates, which are precursors to dynamical systems and chaos theory.  That same year, saw the development of the Schröder integral formula \cite{schroder}, which expressed infinite integrals as series of forward differences at zero using Gregory coefficients and derived integral representations for functions related to the inverse logarithm and generating series, laying groundwork for the form involving $\ln^2 x + \pi^2$.

Other published work include; Vorlesungen über die Algebra der Logik (1890–1905) \cite{schroder_logic}, systematized logic as an algebraic structure. This work lead to the development of Boolean algebras and lattice theory.  His symbolic operator methods also laid early groundwork for umbral calculus.
\begin{example}
In this example we use equation  (\ref{eq:thm2}) and set $v=1$ and simplify;
\begin{multline}\label{eq:schroder}
\int_0^{\infty } \frac{x^{-1+m} \log ^k(a x)}{(1+b x)^{n+1}} \, dx\\
=-\sum _{j=1}^n \sum _{l=0}^j
   \frac{(-1)^{-j+n} b^{-m} e^{i (m-n) \pi } (m-n)^{j-l} (2 i \pi )^{1+k-l} \binom{j}{l} (1+k-l)_l }{n!}\\
\times \Phi \left(e^{2 i
   (m-n) \pi },-k+l,-\frac{i \left(i \pi +\log (a)+\log \left(\frac{1}{b}\right)\right)}{2 \pi }\right)
   S_n^{(j)}
\end{multline}
where $k,a,b\in \mathbb{C}, n\in \mathbb{Z_{+}},|Re(m)|<1$.
\end{example}
\begin{example}
Generalized Schr\"{o}der's integral formula. In this example we use equation (\ref{eq:schroder}) and set $k\to -1,a\to e^a,m\to 1$. Then we form two equation by replacing $a\to -a\pi i$ and $a\to a\pi i$ and take their difference and simplify the Huriwtz-Lerch zeta function using equations [\href{https://dlmf.nist.gov/25.14.E2}{25.14.2}] and [\href{https://dlmf.nist.gov/25.12.E13}{25.12.13}] and simplify;
\begin{multline}\label{eq:schroder1}
\int_0^{\infty } \frac{1}{(1+b x)^{n+1} \left(a^2 \pi ^2+\log ^2(x)\right)} \, dx\\
=\frac{i}{a b n!} \sum _{j=0}^n
   \sum _{l=0}^j  e^{-\frac{1}{2} i (2 j+l) \pi } \left(-(-1+n)^2\right)^{-l} (2 \pi )^{-1-l} \binom{j}{l}\\
   \left(e^{i j \pi } (1-n)^l (-1+n)^j \psi ^{(l)}\left(\frac{\pi +a \pi -i \log (b)}{2 \pi }\right)\right. \\ \left.
-(1-n)^j (-1+n)^l
   \psi ^{(l)}\left(\frac{\pi +a \pi +i \log (b)}{2 \pi }\right)\right) S_n^{(j)}
\end{multline}
where $a,b\in \mathbb{C}, n\geq 0$.
\end{example}
\begin{example}
An integral and finite double series representation for Gregory coefficients pp. 284-287 in \cite{jordan}. In this example we use equation (\ref{eq:schroder1}) and set $b\to 1, a\to \pi$, multiply both sides by $(-1)^n$ and simplify in terms of the Bernoulli number using equation [Wolfram MathWorld, \href{https://mathworld.wolfram.com/RiemannZetaFunction.html}{(61)}];
\begin{multline}\label{eq:schroder2}
\int_0^{\infty } \frac{(-1)^n}{(1+x)^{n+1} \left(\log ^2(x)+\pi ^2\right)} \, dx\\
=\frac{(-1)^{n+\frac{3}{2}} }{2 \pi  n!}\sum _{j=1}^n \sum _{l=1}^j e^{-\frac{1}{2} i (2 j+3 l) \pi }
   \left((-1)^{j+l}-(-1)^j\right) (n-1)^{j-l} (2 \pi )^{-l}\\
    \binom{j}{l} (-1)^l l! \zeta (l+1) S_n^{(j)}\\
   =\frac{(-1)^n }{2 \Gamma(1+n)}\sum _{j=1}^n \sum _{l=0}^j \frac{\binom{j}{1+2 l}
   S_n^{(j)} B_{2+2 l}}{(1+l) (n-1)^{1-j+2 l}}\\
=\frac{1}{(n+1)!}\sum _{l=0}^{n+1} \frac{S_{n+1}^{(l)}}{l+1}
\end{multline}
where $n\geq 2$.
\end{example}
\begin{example}
Double finite sum with Bernoulli numbers and Stirling numbers of the first kind $S_{n+1}^{l}$ in terms of Gregory coefficients. In this example we take the right-hand side of equation (\ref{eq:schroder2}).
\begin{equation}\label{eq:gregory}
\frac{(-1)^n }{2 \Gamma(1+n)}\sum _{j=1}^n \sum _{l=0}^j \frac{\binom{j}{1+2 l}
   S_n^{(j)} B_{2+2 l}}{(1+l) (n-1)^{1-j+2 l}}=\frac{1}{(n+1)!}\sum _{l=0}^{n+1} \frac{S_{n+1}^{(l)}}{l+1}=G_{n+1}
\end{equation}
where $n>1$.
\end{example}
\begin{example}
In this example we look at the partial sum of the left-hand side of equation (\ref{eq:gregory}) to see the point where the series is converging. The series seems to be converging to $\approx 0.026$. A closed form solution for the sum to infinity will be studied in proceeding work.
\begin{figure}[H]
\includegraphics[scale=0.6]{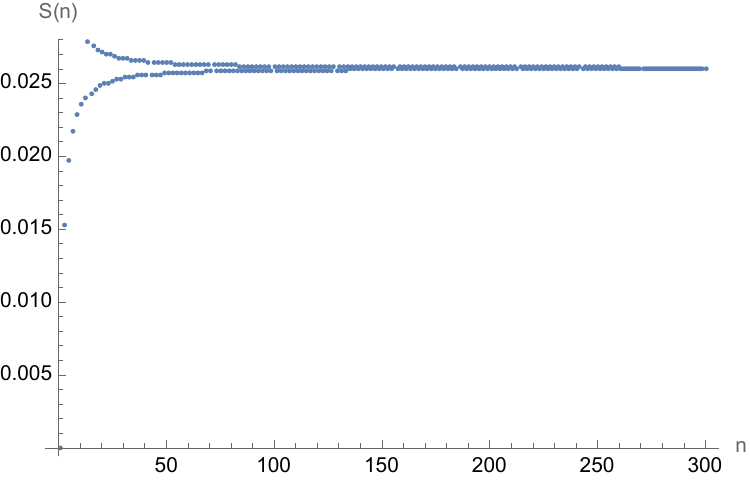}
\caption{Plot of the partial sums of equation (\ref{eq:gregory})}
   \label{fig:fig2}
\end{figure}
\end{example}
\begin{example}
Generalized logarithm Schröder's integral formula. In this example we use equation (\ref{eq:schroder}) and set $k\to -1,a\to e^a,m\to 1$. Then we form two equation by replacing $a\to -a\pi i$ and $a\to a\pi i$ and add these two equations and simplify the Huriwtz-Lerch zeta function using equations [\href{https://dlmf.nist.gov/25.14.E2}{25.14.2}] and [\href{https://dlmf.nist.gov/25.12.E13}{25.12.13}] and simplify;
\begin{multline}
\int_0^{\infty } \frac{\log (x)}{(1+b x)^{n+1} \left(a^2 \pi ^2+\log ^2(x)\right)} \, dx\\
=-\frac{1}{2b n!}\sum _{j=1}^n \sum
   _{l=0}^j 2^{-l} e^{-\frac{1}{2} i (2 j+l) \pi } \left(-(-1+n)^2\right)^{-l} \pi ^{-l} \binom{j}{l}\\
 \left(e^{i j \pi } (1-n)^l (-1+n)^j \psi ^{(l)}\left(\frac{\pi +a \pi -i \log (b)}{2 \pi }\right)\right. \\ \left.
+(1-n)^j (-1+n)^l
   \psi ^{(l)}\left(\frac{\pi +a \pi +i \log (b)}{2 \pi }\right)\right) S_n^{(j)}
\end{multline}
where $a,b\in \mathbb{C}, n\geq 0$.
\end{example}
\begin{example}
Mellin transform of a generalized Schr\"{o}der-type kernel. In this example we use equation (\ref{eq:schroder}) and set $k\to -1,a\to e^a$. Then we form two equation by replacing $a\to -a\pi i$ and $a\to a\pi i$ and take their difference and simplify the Huriwtz-Lerch zeta function using equations [\href{https://dlmf.nist.gov/25.14.E2}{25.14.2}] and [\href{https://dlmf.nist.gov/25.12.E13}{25.12.13}] and simplify;
\begin{multline}
\int_0^{\infty } \frac{x^{m-1}}{(1+b x)^{n+1} \left(\log ^2(x)+a^2 \pi ^2\right)} \, dx\\
=\frac{1}{a \pi  i n!}\sum _{j=0}^n \sum
   _{l=0}^j (-1)^{-j+l+n} 2^{-1-l} b^{-m} e^{i (m-n) \pi } (m-n)^{j-l} (i \pi )^{-l} \binom{j}{l} l! \\
\times\left(\Phi
   \left(e^{2 i (m-n) \pi },1+l,\frac{1}{2}-\frac{a}{2}-\frac{i \log \left(\frac{1}{b}\right)}{2 \pi }\right)\right. \\ \left.
-\Phi
   \left(e^{2 i (m-n) \pi },1+l,\frac{1}{2}+\frac{a}{2}-\frac{i \log \left(\frac{1}{b}\right)}{2 \pi }\right)\right)
   S_n^{(j)}
\end{multline}
where $|Re(a)|<1, |Re(m)|<1, Re(b)>0$.
\end{example}
\begin{example}
Infinite integrals with logarithmic difference kernels and polynomial structures. In this example we use equation (\ref{eq:thm1}) and set $k\to -1$. Next form a second equation by replacing $a\to 1/a$ and taking their difference and simplifying. 
\begin{multline}\label{eq:inf_int_1}
\int_0^{\infty } \frac{x^{-1+m}}{\left(\log ^2(a)-\log ^2(x)\right) \left(b+c x^v\right){}_{1+n}} \, dx\\
=\sum
   _{j=0}^n \frac{(-1)^j c^{-\frac{m}{v}} e^{\frac{i m \pi }{v}} (b+j)^{-1+\frac{m}{v}} \binom{n}{j} }{2 n! \log (a)}\\ \times
\left(\Phi
   \left(e^{\frac{2 i m \pi }{v}},1,\frac{\pi -i v \log \left(\frac{\left(\frac{b+j}{c}\right)^{1/v}}{a}\right)}{2 \pi
   }\right)-\Phi \left(e^{\frac{2 i m \pi }{v}},1,\frac{\pi -i v \log \left(a
   \left(\frac{b+j}{c}\right)^{1/v}\right)}{2 \pi }\right)\right)
\end{multline}
where $|Re(a)|\leq 1/2,Re(v)>0,Re(m)>0$ and there exists a singularity at $x=ai$.
\end{example}
\begin{example}
Mellin-Type integrals with logarithmic kernels and rational denominators. In this example we use equation (\ref{eq:inf_int_1}) and set $a\to ai$ and simplify.
\begin{multline}\label{eq:inf_int_2}
\int_0^{\infty } \frac{x^{m-1}}{\left(a^2+\log ^2(x)\right) \left(b+c x^v\right){}_{1+n}} \, dx\\
=-\sum _{j=0}^n \frac{(-1)^j c^{-\frac{m}{v}} e^{\frac{i m \pi }{v}} (b+j)^{-1+\frac{m}{v}} \binom{n}{j}
   }{2 n! a i}\\ \times
\left(\Phi \left(e^{\frac{2 i m \pi }{v}},1,\frac{\pi -i v \left(-a i+\log \left(\left(\frac{b+j}{c}\right)^{1/v}\right)\right)}{2 \pi }\right)\right. \\ \left.-\Phi \left(e^{\frac{2 i m \pi }{v}},1,\frac{\pi -i v \left(a
   i+\log \left(\left(\frac{b+j}{c}\right)^{1/v}\right)\right)}{2 \pi }\right)\right)
\end{multline}
where $|Re(a)|\leq 1/2,Re(m)>0,Re(v)>0$ and there exists a singularity at $x=i e^{-e^{i a}}$.
\end{example}
\begin{example}
Mellin-type integrals with squared logarithmic kernels and Pochhammer denominators. In this example we use equation (\ref{eq:inf_int_2}) and take the first partial derivative with respect to $a$ and simplify.
\begin{multline}
\int_0^{\infty } \frac{x^{m-1}}{\left(a^2+\log ^2(x)\right)^2 \left(b+c x^v\right){}_{1+n}} \, dx
=\sum _{j=0}^n
   \frac{i (-1)^j c^{-\frac{m}{v}} e^{\frac{i m \pi }{v}} (b+j)^{-1+\frac{m}{v}} \binom{n}{j} }{8 a^3 \pi  n!}\\ \times
\left(2 \pi  \Phi\left(e^{\frac{2 i m \pi }{v}},1,\frac{\pi -a v-i v \log \left(\left(\frac{b+j}{c}\right)^{1/v}\right)}{2 \pi}\right)\right. \\ \left.
-2 \pi  \Phi \left(e^{\frac{2 i m \pi }{v}},1,\frac{\pi +a v-i v \log\left(\left(\frac{b+j}{c}\right)^{1/v}\right)}{2 \pi }\right)\right. \\ \left.
-a v \left(\Phi \left(e^{\frac{2 i m \pi}{v}},2,\frac{\pi -a v-i v \log \left(\left(\frac{b+j}{c}\right)^{1/v}\right)}{2 \pi }\right)\right.\right. \\ \left.\left.
+\Phi \left(e^{\frac{2i m \pi }{v}},2,\frac{\pi +a v-i v \log \left(\left(\frac{b+j}{c}\right)^{1/v}\right)}{2 \pi}\right)\right)\right)
\end{multline}
where $|Re(a)|\leq 1,Re(v)>0,Re(m)>0$ and there exists a singularity at $x=i e^{-\frac{e^{i a}}{4}}$.
\end{example}
\begin{example}
 Integral representation involving logarithmic singularities and power terms. In this example we use equation (\ref{eq:inf_int_2}) and replace $a\to ai$ to form a second equation then take their difference and simplify. 
 \begin{multline}\label{eq:inf_int_3}
\int_0^{\infty } \frac{x^{-1+m}}{\left(a^4-\log ^4(x)\right) \left(b+c x^v\right){}_{1+n}} \, dx
=\sum _{j=0}^n
   \frac{(-1)^j c^{-\frac{m}{v}} e^{\frac{i m \pi }{v}} (b+j)^{-1+\frac{m}{v}} \binom{n}{j} }{4 a^3 n!}\\ \times
\left(\Phi\left(e^{\frac{2 i m \pi }{v}},1,\frac{\pi +i v \left(a-\log\left(\left(\frac{b+j}{c}\right)^{1/v}\right)\right)}{2 \pi }\right)\right. \\ \left.
+i \left(\Phi \left(e^{\frac{2 i m \pi}{v}},1,\frac{\pi -a v-i v \log \left(\left(\frac{b+j}{c}\right)^{1/v}\right)}{2 \pi }\right)\right.\right. \\ \left.\left.
-\Phi \left(e^{\frac{2i m \pi }{v}},1,\frac{\pi +a v-i v \log \left(\left(\frac{b+j}{c}\right)^{1/v}\right)}{2 \pi }\right)\right.\right. \\ \left.\left.
+i \Phi \left(e^{\frac{2 i m \pi }{v}},1,\frac{\pi -i v \left(a+\log\left(\left(\frac{b+j}{c}\right)^{1/v}\right)\right)}{2 \pi }\right)\right)\right)
\end{multline}
where $|Re(a)|\leq 1/2,Re(v)>0,Re(m)>0$ and there exists a singularity at $x=i e^{-e^{3 i a}}$.
\end{example}
\begin{example}
Generalized Mellin integrals with quartic logarithmic kernels and Pochhammer denominators. In this example we use equation (\ref{eq:inf_int_3}) and form a second equation by replacing $a\to a\sqrt{i}$ then take their difference and simplify.
\begin{multline}
\int_0^{\infty } \frac{x^{-1+m}}{\left(a^4+\log ^4(x)\right) \left(b+c x^v\right){}_{1+n}} \, dx
=\sum _{j=0}^n\frac{(-1)^{\frac{1}{4}+j} c^{-\frac{m}{v}} e^{\frac{i m \pi }{v}} (b+j)^{-1+\frac{m}{v}} \binom{n}{j} }{4 a^3 n!}\\ \times
\left(i \Phi\left(e^{\frac{2 i m \pi }{v}},1,\frac{\pi -\sqrt[4]{-1} a v-i v \log
   \left(\left(\frac{b+j}{c}\right)^{1/v}\right)}{2 \pi }\right)\right. \\ \left.
-i \Phi \left(e^{\frac{2 i m \pi }{v}},1,\frac{\pi+\sqrt[4]{-1} a v-i v \log \left(\left(\frac{b+j}{c}\right)^{1/v}\right)}{2 \pi }\right)\right. \\ \left.
+\Phi \left(e^{\frac{2 i m\pi }{v}},1,\frac{\pi +(-1)^{3/4} a v-i v \log \left(\left(\frac{b+j}{c}\right)^{1/v}\right)}{2 \pi }\right)\right. \\ \left.
-\Phi \left(e^{\frac{2 i m \pi }{v}},1,\frac{\pi -i v \left(\sqrt[4]{-1} a+\log
   \left(\left(\frac{b+j}{c}\right)^{1/v}\right)\right)}{2 \pi }\right)\right)
\end{multline}
where $|Re(a)|\leq 1/2,Re(v)>0,Re(m)>0$ and there exists a singularity at $x=i e^{-e^{3 i a}}$.
\end{example}
\begin{example}
 Improper integrals with logarithmic and polynomial structures: From zero to infinity. In this example we use equation (\ref{eq:inf_int_2}) and set $b\to 1,c\to 1,v\to 2 l,a\to \pi ,m\to 2 k+1$ and simplify. 
 \begin{multline}
\int_0^{\infty } \frac{x^{2 k}}{\left(\pi ^2+\log ^2(x)\right) \left(1+x^{2 l}\right){}_{1+n}} \, dx
=\sum_{j=0}^n \frac{i (-1)^j e^{\frac{i (1+2 k) \pi }{2 l}} (1+j)^{\frac{1+2 k-2 l}{2 l}} \binom{n}{j} }{2 \pi  n!}\\ \times
\left(\Phi
   \left(e^{\frac{i (1+2 k) \pi }{l}},1,\frac{1}{2}-l-\frac{i \log (1+j)}{2 \pi }\right)-\Phi \left(e^{\frac{i (1+2 k)
   \pi }{l}},1,\frac{1}{2}+l-\frac{i \log (1+j)}{2 \pi }\right)\right)\\
-\frac{1}{(n+2)!}
\end{multline}
where $k < 4l$ in order for the integral to converge.and there exists a singularity at $x=1$.
\end{example}
\begin{example}
Improper integrals with logarithmic singularities and polynomial growth. In this example we use equation (\ref{eq:inf_int_2}) and set $a\to \pi ,b\to 1,c\to 1,v\to 2 l,m\to 1$ and simplify. 
\begin{multline}
\int_0^{\infty } \frac{1}{\left(\pi ^2+\log ^2(x)\right) \left(1+x^{2 l}\right){}_{1+n}} \, dx
=\sum _{j=0}^n \frac{i (-1)^j e^{\frac{i \pi }{2 l}} (1+j)^{-1+\frac{1}{2 l}} \binom{n}{j} }{2 \pi  n!}\\ \times
\left(\Phi
   \left(e^{\frac{i \pi }{l}},1,\frac{\pi -2 i l \left(-i \pi +\log \left((1+j)^{\frac{1}{2 l}}\right)\right)}{2 \pi }\right)\right. \\ \left.
-\Phi \left(e^{\frac{i \pi }{l}},1,\frac{\pi -2 i l \left(i \pi +\log
   \left((1+j)^{\frac{1}{2 l}}\right)\right)}{2 \pi }\right)\right)-\frac{1}{(n+2)!}
\end{multline}
where $l=1,2,3,..$ and $n=0,1,2,3,..$ and there exists a singularity at $x=1$.
\end{example}
\begin{example}
 Improper integrals with logarithmic singularities and rational denominators. In this example we use equation (\ref{eq:inf_int_2}) and set $m\to 1,v\to 2,b\to 1,c\to 1$ and simplify.
 \begin{multline}\label{eq:inf_int_4}
\int_0^{\infty } \frac{1}{\left(a^2+\log ^2(x)\right) \left(1+x^2\right){}_{1+n}} \, dx
=\sum _{j=0}^n\frac{(-1)^j \binom{n}{j} }{4 a \sqrt{1+j} n!}\\ \times
\left(\psi ^{(0)}\left(\frac{-2 a+\pi -i \log (1+j)}{4 \pi }\right)-\psi
   ^{(0)}\left(\frac{2 a+\pi -i \log (1+j)}{4 \pi }\right)\right. \\ \left.
+\psi ^{(0)}\left(\frac{2 a+3 \pi -i \log (1+j)}{4 \pi
   }\right)-\psi ^{(0)}\left(-\frac{2 a-3 \pi +i \log (1+j)}{4 \pi }\right)\right)
\end{multline}
where $|Re(a)|\leq 1$ and there exists a singularity at $x=i e^{-e^{3 i a}}$.
\end{example}
\begin{example}
Schr\"{o}der-Quadratic-Pochhammer integral formula. In this example we use equation (\ref{eq:inf_int_2}) and set $a\to \pi$ and simplify. The inclusion of $1/(n+2)!$ comes after looking at the case when $n=0$. Then we take the difference of the left-hand side and right-hand side over $n$ and the result yielding $1/(n+2)!$. We then went back to the original formula equation (\ref{eq:inf_int_4}) and subtracted this term from the right-hand side to give the current result.
\begin{multline}\label{eq:int_inf_5}
\int_0^{\infty } \frac{1}{\left(\pi ^2+\log ^2(x)\right) \left(1+x^2\right){}_{1+n}} \, dx\\
=\sum _{j=0}^n \frac{(-1)^j \binom{n}{j} }{4 \sqrt{1+j} \pi 
   n!}\left(\psi ^{(0)}\left(\frac{-\pi -i \log (1+j)}{4 \pi }\right)-\psi
   ^{(0)}\left(\frac{3 \pi -i \log (1+j)}{4 \pi }\right)\right. \\ \left.
+\psi ^{(0)}\left(\frac{5 \pi -i \log (1+j)}{4 \pi }\right)-\psi ^{(0)}\left(-\frac{-\pi +i \log (1+j)}{4 \pi }\right)\right)-\frac{1}{(n+2)!}
\end{multline}
where there exists a singularity at $x=1$.
\end{example}
\subsection{Table of a few Schr\"{o}der-Pochhammer numbers}
In this short section we look at a few iterates of equation (\ref{eq:int_inf_5}) called the Schroder-Pochhammer numbers. These numbers are expressed in terms of the finite series involving the digamma function $\psi^{(0)}(z)$ see [Wolfram MathWorld, \href{https://mathworld.wolfram.com/PolygammaFunction.html}{(1)}].
\begin{example}
An improper integral with logarithmic and quadratic denominators. From equation (\ref{eq:int_inf_5}) when $n=0$.
\begin{multline}
\int_0^{\infty } \frac{1}{\left(1+x^2\right) \left(\pi ^2+\log ^2(x)\right)} \, dx=-\frac{1}{2}+\frac{\psi
   ^{(0)}\left(-\frac{1}{4}\right)-\psi ^{(0)}\left(\frac{1}{4}\right)-\psi ^{(0)}\left(\frac{3}{4}\right)+\psi
   ^{(0)}\left(\frac{5}{4}\right)}{4 \pi }
\end{multline}
\end{example}
\begin{example}
Improper integral with multiple rational factors and logarithmic singularities. From equation (\ref{eq:int_inf_5}) when $n=1$.
\begin{multline}
\int_0^{\infty } \frac{1}{\left(1+x^2\right) \left(2+x^2\right) \left(\pi ^2+\log ^2(x)\right)} \,
   dx\\
=-\frac{1}{6}+\frac{\psi ^{(0)}\left(-\frac{1}{4}\right)-\psi ^{(0)}\left(\frac{1}{4}\right)
-\psi^{(0)}\left(\frac{3}{4}\right)+\psi ^{(0)}\left(\frac{5}{4}\right)}{4 \pi }\\
-\frac{\psi ^{(0)}\left(\frac{-\pi -i
   \log (2)}{4 \pi }\right)-\psi ^{(0)}\left(\frac{3 \pi -i \log (2)}{4 \pi }\right)+\psi ^{(0)}\left(\frac{5 \pi -i
   \log (2)}{4 \pi }\right)-\psi ^{(0)}\left(-\frac{-\pi +i \log (2)}{4 \pi }\right)}{4 \sqrt{2} \pi }
\end{multline}
\end{example}
\begin{example}
Improper integral with multiple quadratic factors and logarithmic singularities. From equation (\ref{eq:int_inf_5}) when $n=2$.
\begin{multline}
\int_0^{\infty } \frac{1}{\left(1+x^2\right) \left(2+x^2\right) \left(3+x^2\right) \left(\pi ^2+\log
   ^2(x)\right)} \, dx\\
=-\frac{1}{24}+\frac{\psi ^{(0)}\left(-\frac{1}{4}\right)-\psi
   ^{(0)}\left(\frac{1}{4}\right)-\psi ^{(0)}\left(\frac{3}{4}\right)+\psi ^{(0)}\left(\frac{5}{4}\right)}{8 \pi
   }\\
-\frac{\psi ^{(0)}\left(\frac{-\pi -i \log (2)}{4 \pi }\right)-\psi ^{(0)}\left(\frac{3 \pi -i \log (2)}{4 \pi
   }\right)+\psi ^{(0)}\left(\frac{5 \pi -i \log (2)}{4 \pi }\right)-\psi ^{(0)}\left(-\frac{-\pi +i \log (2)}{4 \pi
   }\right)}{4 \sqrt{2} \pi }\\
+\frac{\psi ^{(0)}\left(\frac{-\pi -i \log (3)}{4 \pi }\right)-\psi ^{(0)}\left(\frac{3
   \pi -i \log (3)}{4 \pi }\right)+\psi ^{(0)}\left(\frac{5 \pi -i \log (3)}{4 \pi }\right)-\psi
   ^{(0)}\left(-\frac{-\pi +i \log (3)}{4 \pi }\right)}{8 \sqrt{3} \pi }
\end{multline}
\end{example}
\begin{example}
Multi-factor improper integral involving quadratic and logarithmic components. From equation (\ref{eq:int_inf_5}) when $n=3$.
\begin{multline}
\int_0^{\infty } \frac{1}{\left(1+x^2\right) \left(2+x^2\right) \left(3+x^2\right) \left(4+x^2\right) \left(\pi
   ^2+\log ^2(x)\right)} \, dx\\
=-\frac{1}{120}+\frac{\psi ^{(0)}\left(-\frac{1}{4}\right)-\psi
   ^{(0)}\left(\frac{1}{4}\right)-\psi ^{(0)}\left(\frac{3}{4}\right)+\psi ^{(0)}\left(\frac{5}{4}\right)}{24 \pi
   }\\
-\frac{\psi ^{(0)}\left(\frac{-\pi -i \log (2)}{4 \pi }\right)-\psi ^{(0)}\left(\frac{3 \pi -i \log (2)}{4 \pi
   }\right)+\psi ^{(0)}\left(\frac{5 \pi -i \log (2)}{4 \pi }\right)-\psi ^{(0)}\left(-\frac{-\pi +i \log (2)}{4 \pi
   }\right)}{8 \sqrt{2} \pi }\\
+\frac{\psi ^{(0)}\left(\frac{-\pi -i \log (3)}{4 \pi }\right)-\psi ^{(0)}\left(\frac{3
   \pi -i \log (3)}{4 \pi }\right)+\psi ^{(0)}\left(\frac{5 \pi -i \log (3)}{4 \pi }\right)-\psi
   ^{(0)}\left(-\frac{-\pi +i \log (3)}{4 \pi }\right)}{8 \sqrt{3} \pi }\\
-\frac{\psi ^{(0)}\left(\frac{-\pi -i \log
   (4)}{4 \pi }\right)-\psi ^{(0)}\left(\frac{3 \pi -i \log (4)}{4 \pi }\right)+\psi ^{(0)}\left(\frac{5 \pi -i \log
   (4)}{4 \pi }\right)-\psi ^{(0)}\left(-\frac{-\pi +i \log (4)}{4 \pi }\right)}{48 \pi }
\end{multline}
\end{example}
\begin{example}
Multi-factor improper integral combining quadratic and logarithmic components. From equation (\ref{eq:int_inf_5}) when $n=4$.
\begin{multline}
\int_0^{\infty } \frac{1}{\left(1+x^2\right) \left(2+x^2\right) \left(3+x^2\right) \left(4+x^2\right)
   \left(5+x^2\right) \left(\pi ^2+\log ^2(x)\right)} \, dx\\
=-\frac{1}{720}+\frac{\psi
   ^{(0)}\left(-\frac{1}{4}\right)-\psi ^{(0)}\left(\frac{1}{4}\right)-\psi ^{(0)}\left(\frac{3}{4}\right)+\psi
   ^{(0)}\left(\frac{5}{4}\right)}{96 \pi }\\
-\frac{\psi ^{(0)}\left(\frac{-\pi -i \log (2)}{4 \pi }\right)-\psi
   ^{(0)}\left(\frac{3 \pi -i \log (2)}{4 \pi }\right)+\psi ^{(0)}\left(\frac{5 \pi -i \log (2)}{4 \pi }\right)-\psi
   ^{(0)}\left(-\frac{-\pi +i \log (2)}{4 \pi }\right)}{24 \sqrt{2} \pi }\\
+\frac{\psi ^{(0)}\left(\frac{-\pi -i \log
   (3)}{4 \pi }\right)-\psi ^{(0)}\left(\frac{3 \pi -i \log (3)}{4 \pi }\right)+\psi ^{(0)}\left(\frac{5 \pi -i \log
   (3)}{4 \pi }\right)-\psi ^{(0)}\left(-\frac{-\pi +i \log (3)}{4 \pi }\right)}{16 \sqrt{3} \pi }\\
-\frac{\psi
   ^{(0)}\left(\frac{-\pi -i \log (4)}{4 \pi }\right)-\psi ^{(0)}\left(\frac{3 \pi -i \log (4)}{4 \pi }\right)+\psi
   ^{(0)}\left(\frac{5 \pi -i \log (4)}{4 \pi }\right)-\psi ^{(0)}\left(-\frac{-\pi +i \log (4)}{4 \pi }\right)}{48
   \pi }\\
+\frac{\psi ^{(0)}\left(\frac{-\pi -i \log (5)}{4 \pi }\right)-\psi ^{(0)}\left(\frac{3 \pi -i \log (5)}{4 \pi
   }\right)+\psi ^{(0)}\left(\frac{5 \pi -i \log (5)}{4 \pi }\right)-\psi ^{(0)}\left(-\frac{-\pi +i \log (5)}{4 \pi
   }\right)}{96 \sqrt{5} \pi }
\end{multline}
\end{example}
\subsection{The logarithmic Schr\"{o}der-Pochhammer numbers}
\begin{example}
Improper integral involving logarithmic functions and rational denominators. In this example we use equation (\ref{eq:inf_int_2}) and take the first partial derivative with respect to $m$ and set $m\to 1$ and simplify. In this derivation we looked at the first iterate case and yielded a difference of $\frac{\pi i}{(n+2)!}$ as the difference between the left-hand side and right-hand side. We include this term on the right-hand side to yield the correct integral formula.
\begin{multline}
\int_0^{\infty } \frac{\log (x)}{\left(\pi ^2+\log ^2(x)\right) \left(1+x^2\right){}_{1+n}} \, dx
=-\sum _{j=0}^n\frac{i (-1)^j \binom{n}{j} }{4 \sqrt{1+j} n!}\\
\left(\psi ^{(0)}\left(\frac{\pi -i \log (1+j)}{4 \pi }\right)-\psi^{(0)}\left(-\frac{\pi +i \log (1+j)}{4 \pi }\right)-\psi ^{(0)}\left(\frac{3}{4}-\frac{i \log (1+j)}{4 \pi}\right)\right. \\ \left.
+\psi ^{(0)}\left(\frac{5}{4}-\frac{i \log (1+j)}{4 \pi }\right)\right)-\frac{\pi i}{(n+2)!}
\end{multline}
where there exists a singularity at $x=1$.
\end{example}
\begin{example}
Integral representation involving logarithmic weights and polynomial decay. Here we use equation (\ref{eq:inf_int_1}) and take the first partial derivative with respect to $a,m$ and set $b\to 1, c\to 1, v\to 2$ and simplify
\begin{multline}
\int_0^{\infty } \frac{x^{m-1} \log (x)}{\left(a^2+\log ^2(x)\right)^2 \left(1+x^2\right){}_{1+n}} \, dx
=\sum_{j=0}^n \frac{(-1)^j e^{\frac{i m \pi }{2}} (1+j)^{-1+\frac{m}{2}} \binom{n}{j} }{4 a \pi  n!}\\ \times
\left(\Phi \left(e^{i m \pi
   },2,\frac{-2 a+\pi -i \log (1+j)}{2 \pi }\right)-\Phi \left(e^{i m \pi },2,\frac{2 a+\pi -i \log (1+j)}{2 \pi
   }\right)\right)
\end{multline}
where $|Re(a)|<1$ there exists a singularity at $x=i e^{-\frac{e^{i a}}{4}}$.
\end{example}
\begin{example}
A generalized definite integral involving logarithmic and rational functions. In this example we use equation (\ref{eq:inf_int_2}) and set $b\to 1,c\to 1,v\to 2,m\to m+1$. Next we form a second equation by replacing $m\to s$ and take their difference and simplify.
\begin{multline}\label{eq:mellin_schroder}
\int_0^{\infty } \frac{x^s-x^m}{\left(a^2+\log ^2(x)\right) \left(1+x^2\right){}_{1+n}} \, dx
=-\sum _{j=0}^n\frac{i (-1)^j \binom{n}{j} }{2 a \sqrt{1+j} n!}\\ \times
\left(e^{\frac{1}{2} i (1+m) \pi } (1+j)^{m/2} \left(\Phi \left(e^{i (1+m) \pi},1,\frac{-2 a+\pi -i \log (1+j)}{2 \pi }\right)\right.\right. \\ \left. \left.
-\Phi \left(e^{i (1+m) \pi },1,\frac{2 a+\pi -i \log (1+j)}{2 \pi}\right)\right)\right. \\ \left.
+e^{\frac{1}{2} i \pi  (1+s)} (1+j)^{s/2} \left(-\Phi \left(e^{i \pi  (1+s)},1,\frac{-2 a+\pi -i \log (1+j)}{2 \pi }\right)\right.\right. \\ \left.\left.
+\Phi \left(e^{i \pi  (1+s)},1,\frac{2 a+\pi -i \log (1+j)}{2 \pi}\right)\right)\right)
\end{multline}
where $|Re(a)|<\pi/2$ there exists a singularity at $x=e^{\frac{i \pi  a}{3}}$.
\end{example}
\begin{example}
A generalized definite integral involving logarithmic, rational, and power functions. In this example we use equation (\ref{eq:mellin_schroder}) and replace $s\to -m$ and simplify.
\begin{multline}\label{eq:mellin_schroder_1}
\int_0^{\infty } \frac{x^{-m}-x^m}{\left(a^2+\log ^2(x)\right) \left(1+x^2\right){}_{1+n}} \, dx=-\sum _{j=0}^n\frac{i (-1)^j \binom{n}{j} }{2 a \sqrt{1+j} n!}\\ \times
\left(e^{\frac{1}{2} i (1-m) \pi } (1+j)^{-\frac{m}{2}} \left(-\Phi \left(e^{i (1-m)
   \pi },1,\frac{-2 a+\pi -i \log (1+j)}{2 \pi }\right)\right.\right. \\ \left.\left.
+\Phi \left(e^{i (1-m) \pi },1,\frac{2 a+\pi -i \log (1+j)}{2\pi }\right)\right)\right.\\ \left.
+e^{\frac{1}{2} i (1+m) \pi } (1+j)^{m/2} \left(\Phi \left(e^{i (1+m) \pi },1,\frac{-2 a+\pi -i\log (1+j)}{2 \pi }\right)\right.\right. \\ \left.\left.
-\Phi \left(e^{i (1+m) \pi },1,\frac{2 a+\pi -i \log (1+j)}{2 \pi}\right)\right)\right)
\end{multline}
where $n\geq 1$ there exists a singularity at $x=1$.
\end{example}
\subsection{Definite integrals of logarithmic and fractional power functions involving Schr\"{o}der factors $\phi(n)$}\label{eq:method}
The method involves finding the additional factor called Schr\"{o}der factor $\phi(n)$, in terms of reciprocal factorial and other fundamental constants, when $a=\pi$. We use equation (\ref{eq:mellin_schroder_1}) and set $a\to \pi$ and evaluate the first few iterates of $n=1,..,6$. Then we analyze the difference of the left-hand side and right-sides and determine the fractional form for the difference. Then we formulate a recurrence identity for the pattern generated from the iterated process and add this factor to the right-hand side of the finite series. The integral formula are valid for $n\geq 1$ and are listed for $m=1/2,1/3,1/4,1/5,1/6$ respectively..
\begin{example}
In this example we use equation (\ref{eq:mellin_schroder_1}) with $m=1/2,\phi(n)=\frac{2i}{(n+2)!}$.
\begin{multline}
\int_0^{\infty } \frac{\frac{1}{\sqrt{x}}-\sqrt{x}}{\left(\pi ^2+\log ^2(x)\right) \left(1+x^2\right){}_{1+n}}
   \, dx
=-\sum _{j=0}^n \frac{i (-1)^j \binom{n}{j} }{2 \sqrt{1+j} \pi  n!}\\
\times
\left(e^{\frac{3 i \pi }{4}} \sqrt[4]{1+j} \left(\Phi\left(-i,1,\frac{-\pi -i \log (1+j)}{2 \pi }\right)-\Phi \left(-i,1,\frac{3 \pi -i \log (1+j)}{2 \pi}\right)\right)\right. \\ \left.
+\frac{e^{\frac{i \pi }{4}} \left(-\Phi \left(i,1,\frac{-\pi -i \log (1+j)}{2 \pi }\right)+\Phi\left(i,1,\frac{3 \pi -i \log (1+j)}{2 \pi }\right)\right)}{\sqrt[4]{1+j}}\right)+\frac{2i}{(n+2)!}
\end{multline}
where $n\geq 1$ there exists a singularity at $x=1$.
\end{example}
\begin{example}
In this example we use equation (\ref{eq:mellin_schroder_1}) with $m=1/3,\phi(n)=\frac{i\sqrt{3}}{(n+2)!}$.
\begin{multline}
\int_0^{\infty } \frac{\frac{1}{\sqrt[3]{x}}-\sqrt[3]{x}}{\left(\pi ^2+\log ^2(x)\right)
   \left(1+x^2\right){}_{1+n}} \, dx
=-\sum _{j=0}^n \frac{i (-1)^j \binom{n}{j} }{2 \sqrt{1+j} \pi  n!}\\ \times
\left(e^{\frac{2 i \pi }{3}}
   \sqrt[6]{1+j} \left(\Phi \left(e^{-\frac{1}{3} (2 i \pi )},1,\frac{-\pi -i \log (1+j)}{2 \pi }\right)-\Phi
   \left(e^{-\frac{1}{3} (2 i \pi )},1,\frac{3 \pi -i \log (1+j)}{2 \pi }\right)\right)\right. \\ \left.
+\frac{e^{\frac{i \pi }{3}}
   \left(-\Phi \left(e^{\frac{2 i \pi }{3}},1,\frac{-\pi -i \log (1+j)}{2 \pi }\right)+\Phi \left(e^{\frac{2 i \pi
   }{3}},1,\frac{3 \pi -i \log (1+j)}{2 \pi }\right)\right)}{\sqrt[6]{1+j}}\right)+\frac{i\sqrt{3}}{(n+2)!}
\end{multline}
where $n\geq 1$ there exists a singularity at $x=1$.
\end{example}
\begin{example}
In this example we use equation (\ref{eq:mellin_schroder_1}) with $m=1/4,\phi(n)=\frac{i
   \sqrt{2}}{(n+2)!}$.
\begin{multline}
\int_0^{\infty } \frac{\frac{1}{\sqrt[4]{x}}-\sqrt[4]{x}}{\left(\pi ^2+\log ^2(x)\right)
   \left(1+x^2\right){}_{1+n}} \, dx=-\sum _{j=0}^n \frac{i (-1)^j \binom{n}{j} }{2 \sqrt{1+j} \pi  n!}\\ \times
\left(e^{\frac{5 i \pi }{8}}
   \sqrt[8]{1+j} \left(\Phi \left(e^{-\frac{1}{4} (3 i \pi )},1,\frac{-\pi -i \log (1+j)}{2 \pi }\right)-\Phi
   \left(e^{-\frac{1}{4} (3 i \pi )},1,\frac{3 \pi -i \log (1+j)}{2 \pi }\right)\right)\right. \\ \left.
+\frac{e^{\frac{3 i \pi }{8}}
   \left(-\Phi \left(e^{\frac{3 i \pi }{4}},1,\frac{-\pi -i \log (1+j)}{2 \pi }\right)+\Phi \left(e^{\frac{3 i \pi
   }{4}},1,\frac{3 \pi -i \log (1+j)}{2 \pi }\right)\right)}{\sqrt[8]{1+j}}\right)+\frac{i
   \sqrt{2}}{(n+2)!}
\end{multline}
where $n\geq 1$ there exists a singularity at $x=1$.
\end{example}
\begin{example}
In this example we use equation (\ref{eq:mellin_schroder_1}) with $m=1/5,\phi(n)=\frac{i
   \sqrt{\frac{1}{2} \left(5-\sqrt{5}\right)}}{(2+n)!}$ which is closely related to the Golden ratio number [Wolfram MathWorld, \href{https://mathworld.wolfram.com/GoldenRatio.html}{(6)}].
\begin{multline}
\int_0^{\infty } \frac{\frac{1}{\sqrt[5]{x}}-\sqrt[5]{x}}{\left(\pi ^2+\log ^2(x)\right)
   \left(1+x^2\right){}_{1+n}} \, dx=-\sum _{j=0}^n \frac{i (-1)^j \binom{n}{j} }{2 \sqrt{1+j} \pi  n!}\\ \times
\left(e^{\frac{3 i \pi }{5}}
   \sqrt[10]{1+j} \left(\Phi \left(e^{-\frac{1}{5} (4 i \pi )},1,\frac{-\pi -i \log (1+j)}{2 \pi }\right)-\Phi
   \left(e^{-\frac{1}{5} (4 i \pi )},1,\frac{3 \pi -i \log (1+j)}{2 \pi }\right)\right)\right. \\ \left.
+\frac{e^{\frac{2 i \pi }{5}}
   \left(-\Phi \left(e^{\frac{4 i \pi }{5}},1,\frac{-\pi -i \log (1+j)}{2 \pi }\right)+\Phi \left(e^{\frac{4 i \pi
   }{5}},1,\frac{3 \pi -i \log (1+j)}{2 \pi }\right)\right)}{\sqrt[10]{1+j}}\right)+\frac{i
   \sqrt{\frac{1}{2} \left(5-\sqrt{5}\right)}}{(2+n)!}
\end{multline}
where $n\geq 1$ there exists a singularity at $x=1$.
\end{example}
\begin{example}
In this example we use equation (\ref{eq:mellin_schroder_1}) with $m=1/6,\phi(n)=\frac{i}{(n+2)!}$
\begin{multline}
\int_0^{\infty } \frac{\frac{1}{\sqrt[6]{x}}-\sqrt[6]{x}}{\left(\pi ^2+\log ^2(x)\right)
   \left(1+x^2\right){}_{1+n}} \, dx=-\sum _{j=0}^n \frac{i (-1)^j \binom{n}{j} }{2 \sqrt{1+j} \pi 
   n!}\\ \times
\left(e^{\frac{7 i \pi }{12}}
   \sqrt[12]{1+j} \left(\Phi \left(e^{-\frac{1}{6} (5 i \pi )},1,\frac{-\pi -i \log (1+j)}{2 \pi }\right)-\Phi
   \left(e^{-\frac{1}{6} (5 i \pi )},1,\frac{3 \pi -i \log (1+j)}{2 \pi }\right)\right)\right. \\ \left.
+\frac{e^{\frac{5 i \pi }{12}}
   \left(-\Phi \left(e^{\frac{5 i \pi }{6}},1,\frac{-\pi -i \log (1+j)}{2 \pi }\right)+\Phi \left(e^{\frac{5 i \pi
   }{6}},1,\frac{3 \pi -i \log (1+j)}{2 \pi }\right)\right)}{\sqrt[12]{1+j}}\right)+\frac{i}{(n+2)!}
\end{multline}
where $n\geq 1$ there exists a singularity at $x=1$.
\end{example}
\begin{example}
Here we use equation (\ref{eq:inf_int_1}) and take the first partial derivative with respect to $m$ and set $m\to m+1$. Next we replace $m\to -m,a\to \pi$ and simplify. Here we use the method in Section (\ref{eq:method}) where $m=1/2, \phi(n)=\frac{2\pi}{(n+2)!}$.
\begin{multline}\label{eq:schroder_log_1}
\int_0^{\infty } \frac{\left(-\frac{1}{\sqrt{x}}+\sqrt{x}\right) \log (x)}{\left(\pi ^2+\log ^2(x)\right)
   \left(1+x^2\right){}_{1+n}} \, dx=\sum _{j=0}^n \frac{(-1)^j e^{\frac{i \pi }{4}} \binom{n}{j}}{2 (1+j)^{3/4} n!}\\ \times
 \left(-i \sqrt{1+j}\left(\Phi \left(-i,1,-\frac{\pi +i \log (1+j)}{2 \pi }\right)+\Phi \left(-i,1,\frac{3}{2}-\frac{i \log (1+j)}{2\pi }\right)\right)\right. \\ \left.
+\Phi \left(i,1,-\frac{\pi +i \log (1+j)}{2 \pi }\right)+\Phi \left(i,1,\frac{3}{2}-\frac{i \log
   (1+j)}{2 \pi }\right)\right)+\frac{2 \pi }{(2+n)!}
\end{multline}
where $n\geq 1$ there exists a singularity at $x=1$.
\end{example}
%
%
\begin{example}
Integral involving a log-squared term and rational function with integer parameter $m$. Here we use the method in Section (\ref{eq:method}) where the Schr\"{o}der factor is $\phi(n)=-\frac{(-1)^{m+1}}{(n+2)!}$. In this example we use equation (\ref{eq:inf_int_2})  and set $b\to 1,c\to 1,v\to 2 m,a\to \pi$ and simplify.
\begin{multline}
\int_0^{\infty } \frac{x^{m-1}}{\left(\pi ^2+\log ^2(x)\right) \left(1+x^{2 m}\right){}_{1+n}} \, dx
=\sum _{j=0}^n \frac{(-1)^j \binom{n}{j} }{4 \sqrt{1+j} \pi  n!}\\ \times
\left(\psi ^{(0)}\left(\frac{\pi -2 m \pi
   -i \log (1+j)}{4 \pi }\right)-\psi ^{(0)}\left(\frac{\pi +2 m \pi -i \log (1+j)}{4 \pi }\right)\right. \\ \left.
+\psi ^{(0)}\left(\frac{1}{2} \left(\frac{3}{2}+m-\frac{i \log (1+j)}{2 \pi
   }\right)\right)-\psi ^{(0)}\left(\frac{1}{4} \left(3-2 m-\frac{i \log (1+j)}{\pi }\right)\right)\right)-\frac{(-1)^{m+1}}{(n+2)!}
\end{multline}
where $m=1,2,3,..$ and $n=1,2,3,..$ and there exists a singularity at $x=1$.
\end{example}
\begin{example}
Definite integral involving a logarithmic function and a generalized Pochhammer symbol. In this example we use equation (\ref{eq:inf_int_1}) and set $m\to 1,a\to 2 \pi ,b\to 1,c\to 1$ then take the limit as $v\to 1$ using l'Hopital's rule and simplify in terms of the digamma function using [Wolfram MathWorld, \href{https://mathworld.wolfram.com/HurwitzZetaFunction.html}{(20)}]. Here we use the method in Section (\ref{eq:method}) where $\phi(n)=\frac{1}{2 (n+2)!}$.
\begin{multline}
\int_0^{\infty } \frac{1}{\left(4 \pi ^2+\log ^2(x)\right) (1+x)_{1+n}} \, dx\\
=\sum _{j=0}^n \frac{i (-1)^j}{4 \pi  n!}\binom{n}{j} \left(\psi ^{(0)}\left(-\frac{\pi +i \log (1+j)}{2 \pi }\right)-\psi ^{(0)}\left(\frac{3}{2}-\frac{i
   \log (1+j)}{2 \pi }\right)\right)\\
   +\frac{1}{2 (n+2)!}
\end{multline}
where $n\geq 1$ there exists a singularity at $x=1$.
\end{example}
\begin{example}
A definite integral involving logarithmic and rational functions. In this example we use equation (\ref{eq:inf_int_1}) and set $a\to 2 \pi ,b\to 1,c\to 1,v\to 2 q+1,m\to 1$ and simplify. Here we use the method in Section (\ref{eq:method}) where $\phi(n)=\frac{1}{2 (n+2)!}$.
\begin{multline}
\int_0^{\infty } \frac{1}{\left(4 \pi ^2+\log ^2(x)\right) \left(1+x^{1+2 q}\right){}_{1+n}} \, dx
=\sum_{j=0}^n \frac{i (-1)^j e^{\frac{i \pi }{1+2 q}} (1+j)^{-\frac{2 q}{1+2 q}} }{4 \pi  n!}\\ 
\binom{n}{j} \left(\Phi
   \left(e^{\frac{2 i \pi }{1+2 q}},1,-\frac{\pi +4 \pi  q+i \log (1+j)}{2 \pi }\right)-\Phi \left(e^{\frac{2 i \pi
   }{1+2 q}},1,\frac{3}{2}+2 q-\frac{i \log (1+j)}{2 \pi }\right)\right)\\
+\frac{1}{2 (n+2)!}
\end{multline}
where $n,q\in\mathbb{Z_{+}}$ and there exists a singularity at $x=1$.
\end{example}
\begin{example}
Definite integral involving a logarithmic term and a Pochhammer symbol. In this example we use equation (\ref{eq:inf_int_1}) and set $a\to i \pi ,b\to 1,c\to 1,v\to 1,m\to 1$ then take the limit as $v\to 1$ using l'Hopital's rule and simplify in terms of the digamma function using [Wolfram MathWorld, \href{https://mathworld.wolfram.com/HurwitzZetaFunction.html}{(20)}]. 
\begin{multline}\label{eq:schroder_negative_pi_1}
\int_0^{\infty } \frac{1}{\left(\pi ^2-\log ^2(x)\right) (1+x)_{1+n}} \, dx\\
=\sum _{j=0}^n \frac{(-1)^j}{2 \pi  n!}\binom{n}{j} \left(\psi ^{(0)}\left(\frac{(1+i) \pi -i \log (1+j)}{2 \pi }\right)-\psi ^{(0)}\left(-\frac{i ((1+i)\pi +\log (1+j))}{2 \pi }\right)\right)
\end{multline}
where $n\geq 1$ there exists a singularity at $x=i e^{\pi }$.
\end{example}
\begin{example}
In this example we look at the plot of the integrand for equation (\ref{eq:schroder_negative_pi_1}) and where it's singularities lie at $x=e^{-\pi}$ and $x=e^{\pi}$.
\begin{figure}[H]
\includegraphics[scale=0.6]{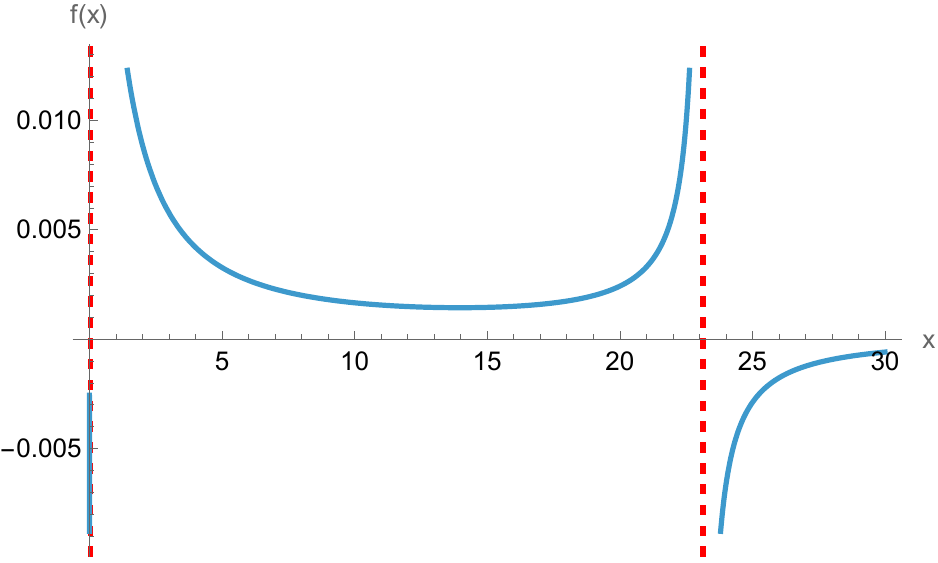}
\caption{ Plot of the $\frac{1}{\left(\pi ^2-\log ^2(x)\right) (x+1)_{n+1}}$}
   \label{fig:fig2}
\end{figure}
\end{example}
\begin{example}
Definite integral involving a logarithmic term and a Pochhammer symbol. In this example we use equation (\ref{eq:inf_int_1}) and take the first partial derivative with respect to $m$ and set $a\to 2 \pi ,b\to 1,c\to 1,v\to 2,m\to 1$ then take the limit as $v\to 1$ using l'Hopital's rule and simplify in terms of the digamma function using [Wolfram MathWorld, \href{https://mathworld.wolfram.com/HurwitzZetaFunction.html}{(20)}]. Here we use the method in Section (\ref{eq:method}) where $\phi(n)=\frac{\pi i}{ (n+2)!}$.
\begin{multline}
\int_0^{\infty } \frac{\log (x)}{\left(4 \pi ^2+\log ^2(x)\right) \left(1+x^2\right){}_{1+n}} \, dx
=\sum _{j=0}^n \frac{i (-1)^j }{4 \sqrt{1+j} n!}\\
\binom{n}{j} \left(-\psi ^{(0)}\left(-\frac{\pi +i
   \log (1+j)}{4 \pi }\right)+\psi ^{(0)}\left(-\frac{3}{4}-\frac{i \log (1+j)}{4 \pi }\right)\right. \\ \left.
+\psi ^{(0)}\left(\frac{5}{4}-\frac{i \log (1+j)}{4 \pi }\right)-\psi^{(0)}\left(\frac{7}{4}-\frac{i \log (1+j)}{4 \pi }\right)\right)+\frac{\pi  i}{(n+2)!}
\end{multline}
where $n\geq 1$.
\end{example}
\begin{example}
Definite integral involving a reciprocal logarithmic term and a Pochhammer symbol. In this example we use equation (\ref{eq:inf_int_1}) and set $b\to 1,c\to 1,v\to 1,m\to 1$ then simplify in terms of the digamma function using [Wolfram MathWorld, \href{https://mathworld.wolfram.com/HurwitzZetaFunction.html}{(20)}]. We then set $a\to 2 a p \pi$ and simplify. Here we use the method in Section (\ref{eq:method}) where $\phi(n)=\frac{\pi i}{ 2p(n+2)!}$.
\begin{multline}
\int_0^{\infty } \frac{1}{\left(4 p^2 \pi ^2+\log ^2(x)\right) (1+x)_{1+n}} \, dx\\
=\frac{1}{4 i \pi  p n!}\sum _{j=0}^n (-1)^j \binom{n}{j} \left(\psi ^{(0)}\left(\frac{1}{2}+p-\frac{i \log (1+j)}{2 \pi }\right)-\psi
   ^{(0)}\left(\frac{1}{2}-p-\frac{i \log (1+j)}{2 \pi }\right)\right)\\
+\frac{1}{2 p (n+2)!}
\end{multline}
where $n,p\geq 1$.
\end{example}
\begin{example}
The definite integral of the reciprocal of a squared logarithmic and generalized rational function product. In this example we use equation (\ref{eq:inf_int_1}) and set $b\to 1,c\to 1,v\to 1,m\to 1$ then we simplify the right-hand side in terms of the digamma function using equation [Wolfram MathWorld, \href{https://mathworld.wolfram.com/HurwitzZetaFunction.html}{(20)}]. Then we take the first partial derivative with respect to $a$ and simplify.
\begin{multline}\label{eq:alog_1}
\int_0^{\infty } \frac{1}{\left(a^2 \pi ^2+\log ^2(x)\right)^2 (1+x)_{1+n}} \, dx
=\sum _{j=0}^n \frac{i (-1)^j }{8 a^3 \pi ^3 n!}\\
\binom{n}{j} \left(2 \psi ^{(0)}\left(\frac{\pi -a \pi -i \log (1+j)}{2 \pi }\right)-2
   \psi ^{(0)}\left(\frac{\pi +a \pi -i \log (1+j)}{2 \pi }\right)\right. \\ \left.
+a \left(\psi ^{(1)}\left(\frac{\pi -a \pi -i \log (1+j)}{2 \pi }\right)+\psi ^{(1)}\left(\frac{\pi +a \pi -i \log (1+j)}{2 \pi}\right)\right)\right)
\end{multline}
where $|Re(a)|\leq1/3,n\geq 1$ and there exists a singularity at $x=i e^{-\frac{1}{3} i \pi  a}$
\end{example}
\begin{example}
The definite integral of the reciprocal of a cubed logarithmic and generalized rational function product. In this example we use equation (\ref{eq:alog_1}) and take the first partial derivative with respect to $a$ and simplify. 
\begin{multline}\label{eq:alog_2}
\int_0^{\infty } \frac{1}{\left(a^2 \pi ^2+\log ^2(x)\right)^3 (1+x)_{1+n}} \, dx\\
=\sum _{j=0}^n \frac{i (-1)^j}{64 a^5 \pi ^5 n!} \binom{n}{j} \left(12 \psi ^{(0)}\left(\frac{\pi -a \pi -i \log (1+j)}{2 \pi }\right)-12\psi ^{(0)}\left(\frac{\pi +a \pi -i \log (1+j)}{2 \pi }\right)\right. \\ \left.
+a \left(6 \psi ^{(1)}\left(\frac{\pi -a \pi -i \log (1+j)}{2 \pi }\right)+6 \psi ^{(1)}\left(\frac{\pi +a \pi -i \log (1+j)}{2 \pi}\right)\right.\right. \\ \left.\left.
+a \psi ^{(2)}\left(\frac{\pi -a \pi -i \log (1+j)}{2 \pi }\right)-a \psi ^{(2)}\left(\frac{\pi +a \pi -i \log (1+j)}{2 \pi }\right)\right)\right)
\end{multline}
where $|Re(a)|\leq1/3,n\geq 1$ and there exists a singularity at $x=i e^{-\frac{1}{3} i \pi  a}$.
\end{example}
\begin{example}
The definite integral of the reciprocal of a fourth-power logarithmic and generalized rational function product. In this example we use equation (\ref{eq:alog_2}) and take the first partial derivative with respect to $a$ and simplify. 
\begin{multline}
\int_0^{\infty } \frac{1}{\left(a^2 \pi ^2+\log ^2(x)\right)^4 (1+x)_{1+n}} \, dx\\
=\sum _{j=0}^n \frac{i (-1)^j }{768 a^7 \pi ^7 n!}\binom{n}{j} \left(120 \psi ^{(0)}\left(\frac{\pi -a \pi -i \log (1+j)}{2 \pi
   }\right)\right. \\ \left.
-120 \psi ^{(0)}\left(\frac{\pi +a \pi -i \log (1+j)}{2 \pi }\right)+a \left(60 \psi ^{(1)}\left(\frac{\pi -a \pi -i \log (1+j)}{2 \pi }\right)\right.\right. \\ \left.\left.
+60 \psi ^{(1)}\left(\frac{\pi +a \pi -i \log(1+j)}{2 \pi }\right)+a \left(12 \psi ^{(2)}\left(\frac{\pi -a \pi -i \log (1+j)}{2 \pi }\right)\right.\right.\right. \\ \left.\left.\left.
-12 \psi ^{(2)}\left(\frac{\pi +a \pi -i \log (1+j)}{2 \pi }\right)+a \left(\psi ^{(3)}\left(\frac{\pi -a\pi -i \log (1+j)}{2 \pi }\right)\right.\right.\right.\right. \\ \left.\left.\left.\left.
+\psi ^{(3)}\left(\frac{\pi +a \pi -i \log (1+j)}{2 \pi }\right)\right)\right)\right)\right)
\end{multline}
where $|Re(a)|\leq1/3,n\geq 1$ and there exists a singularity at $x=i e^{-\frac{1}{3} i \pi  a}$.
\end{example}
\begin{example}
Improper definite integral of the reciprocal of the product of a square root, a squared logarithmic term, and a rising factorial. In this example we use equation (\ref{eq:thm1}) and set $m\to \frac{1}{2},b\to 1,v\to 1,a\to e^a$ and simplify using equation [Wolfram Functions,\href{http://functions.wolfram.com/10.06.03.0074.01}{(1)}]. The we take the limit as $k\to -1$ using l'Hopital's rule and simplify in terms of the digamma function using equation [Wolfram Functions, \href{http://functions.wolfram.com/10.02.25.0001.01}{(1)}]. Next we form a second equation by replacing $a\to -a$ and take their difference and simplify.
\begin{multline}
\int_0^{\infty } \frac{1}{\sqrt{x} \left(a^2+\log ^2(x)\right) (1+c x)_{1+n}} \, dx\\=
\begin{cases}
			\text{sgn}(c)\sum\limits_{j=0}^n \frac{(-1)^j }{4 a \sqrt{c} \sqrt{1+j} n!}\binom{n}{j} \left(-\psi ^{(0)}\left(\frac{a+\pi -i \log \left(\frac{1+j}{c}\right)}{4 \pi}\right)+\psi ^{(0)}\left(\frac{a+3 \pi -i \log \left(\frac{1+j}{c}\right)}{4 \pi }\right)\right. \\ \left.
-\psi ^{(0)}\left(-\frac{a-3 \pi +i \log \left(\frac{1+j}{c}\right)}{4 \pi }\right)+\psi ^{(0)}\left(-\frac{a-\pi
   +i \log \left(\frac{1+j}{c}\right)}{4 \pi }\right)\right), & \text{if $c$ real}\\\\
            \sum\limits _{j=0}^n \frac{(-1)^j }{4 a \sqrt{c} \sqrt{1+j} n!}\binom{n}{j} \left(-\psi ^{(0)}\left(\frac{a+\pi -i \log \left(\frac{1+j}{c}\right)}{4 \pi}\right)+\psi ^{(0)}\left(\frac{a+3 \pi -i \log \left(\frac{1+j}{c}\right)}{4 \pi }\right)\right. \\ \left.
-\psi ^{(0)}\left(-\frac{a-3 \pi +i \log \left(\frac{1+j}{c}\right)}{4 \pi }\right)+\psi ^{(0)}\left(-\frac{a-\pi
   +i \log \left(\frac{1+j}{c}\right)}{4 \pi }\right)\right), & \text{if $c$ complex}
		 \end{cases}
\end{multline}
where $n\geq 1$ and there exists a singularity at $x=i e^{-i a}$.
\end{example}
\section{Ratio of polynomials}
In this section we extend the latter results by looking at infinite integrals involving the product of the ratio of generalized polynomials and generalized logarithm function. These integrals are expressed in terms of multiple finite series involving special functions.
\begin{example}
In this example we use equation (3.241.4(12)) in \cite{grad} with $p\to 1, n\to 0, q\to b$. Next we take the $n$-th partial derivative with respect to $b$ and simplify in terms of the Pochhammer symbol in equation [Wolfram MathWorld,\href{https://mathworld.wolfram.com/PochhammerSymbol.html}{(8)}]. Next we replace $u\to j (\beta -\alpha )+\alpha  p$ and multiply both sides by $(-1)^j \binom{p}{j}$ and take the finite sum of both sides over $j\in [0,p]$.Then we apply the contour integral method in \cite{reyn4} and equation (5) in \cite{reyn5} and simplify.
\begin{multline}\label{eq:gen_poly1}
\int_0^{\infty } \frac{\left(x^{\alpha }-x^{\beta }\right)^p }{\left(1+b x^v\right)^{n+1}}\log ^k(a x) \,
   dx\\
=\sum _{j=0}^n \sum _{l=0}^j \sum _{h=0}^p \frac{(-1)^{h-j} b^{-\frac{1+p \alpha +h (-\alpha +\beta )}{v}} \exp\left(\frac{i \pi  \left(1-\left(\frac{1}{2}+n\right) v+p \alpha +h (-\alpha +\beta )\right)}{v}\right) }{v n!}\\ \times
(2 \pi
   )^{1+k-l} \left(-\frac{1}{v}\right)^l \left(\frac{i}{v}\right)^{k-l} \left(1-\frac{1+p \alpha +h (-\alpha +\beta
   )}{v}\right)^{j-l} \binom{j}{l} \binom{p}{h}\\
 \Phi \left(e^{\frac{2 i \pi  (1-n v+p \alpha +h (-\alpha +\beta
   ))}{v}},-k+l,\frac{\pi -i v \log \left(a b^{-1/v}\right)}{2 \pi }\right) (1+k-l)_l S_n^{(j)}
\end{multline}
where $|Re(s)|<1,|Re(r)|<1,Re(v)>0$ and $Re(v) > p \times Re(\beta)$ in order for the integral to converge and there exists a singularity at $x=1/a$.
\end{example}
\begin{example}
Reciprocal logarithm infinite integral example. Similar integrals over [0,1] are given in section (4.212) in \cite{grad}. In this example we use equation (\ref{eq:gen_poly1}) and set $k\to -1, a\to e^{ai}$. Then we form a second equation by replacing $a\to -a$ and take their difference and simplify. Note the range of integration is given by $x\in[0,i e^{-a},e^a,\infty)$.
\begin{multline}
\int_0^{\infty } \frac{\left(x^s-x^r\right)^p}{\left(1+b x^v\right)^{n+1} \left(a^2-\log ^2(x)\right)} \,
   dx\\
=\sum _{j=0}^n \sum _{l=0}^j \sum _{h=0}^p \frac{(-1)^{h-j+l} 2^{-1-l} b^{-\frac{1+h r-h s+p s}{v}} \exp
   \left(\frac{i \pi  \left(1+h (r-s)+p s-\left(\frac{1}{2}+n\right) v\right)}{v}\right) \left(\frac{i}{\pi }\right)^l
   }{a i n!}\\ \times
\left(\frac{-1-p s+h (-r+s)+v}{v}\right)^{j-l} \binom{j}{l} \binom{p}{h} l! \\
\left(\Phi \left(e^{\frac{2 i \pi  (1+h
   (r-s)+p s-n v)}{v}},1+l,\frac{\pi -a v i+i \log (b)}{2 \pi }\right)\right. \\ \left.
-\Phi \left(e^{\frac{2 i \pi  (1+h (r-s)+p s-n
   v)}{v}},1+l,\frac{\pi +a v i+i \log (b)}{2 \pi }\right)\right) S_n^{(j)}
\end{multline}
where $Re(v)>Re(m)+Re(s), Re(a)\geq \pi/3$ and a singularities exists at $x=i e^{-a},x=e^a$.
\end{example}
\begin{example}
This is the logarithm version to equation (3.246) and (3.247.2) in \cite{grad}. In this example we use equation (\ref{eq:gen_poly1}) and set $\alpha \to m,\beta \to s,p\to n,n\to n-1,a\to 1,b\to -1$ and simplify using the Hurwitz-Lerch zeta function in terms of the Polylogarithm function given by $\Phi (z,-s,0)=\text{Li}_{-s}(z)$ where $Re(s)>0$.
 \begin{multline}\label{eq:poyl_n}
\int_0^{\infty } \left(\frac{x^m-x^s}{1-x^v}\right)^n \log ^k(x) \, dx\\
=\sum _{j=0}^{n-1} \sum _{l=0}^j \sum
   _{h=0}^n \frac{e^{i (h-j-n) \pi } (2 \pi )^{1+k-l} \left(-\frac{1}{v}\right)^l \left(\frac{i}{v}\right)^{1+k-l}
   \left(\frac{-1-m n+h (m-s)+v}{v}\right)^{j-l} \binom{j}{l} \binom{n}{h}}{\Gamma (n)}\\ \times
 \text{Li}_{-k+l}\left(e^{\frac{2 i \pi 
   (1+m n+h (-m+s)-n v)}{v}}\right) (1+k-l)_l S_{-1+n}^{(j)}
\end{multline}
where $Re(v)>Re(m)+Re(s)$ and a singularity exists at $x=i/2$.
\end{example}
\begin{example}
The Hurwitz zeta form. In this example we use equation (\ref{eq:poyl_n}) and simplify using equation (\ref{eq:prelim_1}). 
\begin{multline}\label{eq:poyl_n1}
\int_0^{\infty } \left(\frac{x^m-x^s}{1-x^v}\right)^n \log ^k(x) \, dx\\
=\sum _{j=0}^{n-1} \sum _{l=0}^j \sum
   _{h=0}^n \frac{i^{-k-l} e^{i (h-j-n) \pi } \left(-\frac{1}{v}\right)^l \left(\frac{i}{v}\right)^{k-l}
   \left(\frac{-1-m n+h (m-s)+v}{v}\right)^{j-l} \binom{j}{l} \binom{n}{h} \Gamma (1+k)}{v \Gamma (n)}\\ \times
 \left(i^{2 l} \zeta
   \left(1+k-l,n-\frac{1-h m+m n+h s}{v}\right)\right. \\ \left.
-i^{2 k} \zeta \left(1+k-l,\frac{1-h m+m n+h s+v-n v}{v}\right)\right)
   S_{-1+n}^{(j)}
\end{multline}
where $Re(v)>Re(m)+Re(s)$ and a singularity exists at $x=i/2$.
\end{example}
\begin{example}
 In this example we use equation (\ref{eq:poyl_n}) and simplify the Pochhammer expression using equation [Wolfram Research, \href{http://functions.wolfram.com/06.10.06.0001.01}{01}]. 
 \begin{multline}\label{eq:poyl_n2}
\int_0^{\infty } \left(\frac{x^m-x^s}{1-x^v}\right)^n \log ^k(x) \, dx\\
=\sum _{j=0}^{n-1} \sum _{l=0}^j \sum
   _{h=0}^n \sum _{p=0}^l \sum _{q=0}^p \frac{(-1)^{2 l-p} e^{i (h-j-n) \pi } \binom{p}{q} (k+1)^{p-q} (-l)^q (2 i \pi)^{1+k-l} v^{-1-j-k+l} }{\Gamma (n)}\\ \times
(-1-m n+h (m-s)+v)^{j-l} \binom{j}{l} \binom{n}{h} \text{Li}_{-k+l}\left(e^{\frac{2 i \pi (1+m n+h (-m+s)-n v)}{v}}\right) S_l^{(p)} S_{-1+n}^{(j)}
\end{multline}
where $Re(v)>Re(m)+Re(s)$ and a singularity exists at $x=i/2$.
\end{example}
\begin{example}
Generalized Malmsten-Erdeyli integral form. In this example we look at an integrand form which comprises the nested logarithm function studied by Malmsten \cite{malmsten} and the ratio of polynomials raised to an integer power with a singularity studied by Erdeyli see equation (3.246) in \cite{grad}. In this example we use equation (\ref{eq:poyl_n2}) and take the first partial derivative with respect to $k$ and set $k\to 0$ and simplify;
\begin{multline}
\int_0^{\infty } \left(\frac{x^m-x^s}{1-x^v}\right)^n \log (\log (x)) \, dx\\
=\sum _{j=0}^{n-1} \sum _{l=0}^j
   \sum _{h=0}^n \sum _{p=0}^l \sum _{q=0}^p \frac{i (-1)^{2 l-p} i^{-l} e^{i (h-j-n) \pi } (-l)^q (2 \pi )^{1-l}
   v^{-1-j+l} }{\Gamma(n)}\\ \times
(-1-m n+h (m-s)+v)^{j-l} \binom{j}{l} \binom{n}{h} \binom{p}{q} S_l^{(p)} S_{-1+n}^{(j)} \\
\left((p-q+\log(2 i \pi )-\log (v)) \text{Li}_l\left(e^{\frac{2 i \pi  (1+m n+h (-m+s)-n
   v)}{v}}\right)-\text{Li}_l'\left(e^{\frac{2 i \pi  (1+m n+h (-m+s)-n v)}{v}}\right)\right)
\end{multline}
where $Re(v)>Re(m)+Re(s)$ and a singularity exists at $x=i/2$.
\end{example}
%
%
%
\begin{example}
In this example we repeat the process in example (\ref{eq:gen_poly1}) with multiply both sides by $\binom{p}{j}$ and simplify;
\begin{multline}\label{eq:gen_poly2}
\int_0^{\infty } \frac{\left(x^{\alpha }+x^{\beta }\right)^p \log ^k(a x)}{\left(1+b x^v\right)^{n+1}} \,
   dx\\
=\sum _{j=0}^n \sum _{l=0}^j \sum _{h=0}^p \frac{(-1)^{-j} b^{-\frac{1+p \alpha +h (-\alpha +\beta )}{v}} \exp\left(\frac{i \pi  \left(1-\left(\frac{1}{2}+n\right) v+p \alpha +h (-\alpha +\beta )\right)}{v}\right) (2 \pi
   )^{1+k-l} \left(-\frac{1}{v}\right)^l \left(\frac{i}{v}\right)^{k-l} }{v n!}\\ \times
\left(1-\frac{1+p \alpha +h (-\alpha +\beta
   )}{v}\right)^{j-l} \binom{j}{l} \binom{p}{h}\\
 \Phi \left(e^{\frac{2 i \pi  (1-n v+p \alpha +h (-\alpha +\beta
   ))}{v}},-k+l,\frac{\pi -i v \log \left(a b^{-1/v}\right)}{2 \pi }\right) (1+k-l)_l S_n^{(j)}
\end{multline}
where $|Re(\alpha)|<1,|Re(\beta)|<1,Re(v)>0$.
\end{example}
\begin{example}
In this example we use equation (\ref{eq:gen_poly2}) and set $k\to -1$. When deriving the Ramanujan reciprocal logarithm integral form \cite{wood}. First set $k\to -1$ then apply the respective singularity points at $x=-ai, x=i/a$ when $a=1/a$ and $a=a$. Next take their difference and simplify. The singularity for the final form exists at $x=i/a$ where $Re(a)>0$. The final form can also be written in terms of the variable $a$ in the denominator by replacing $a \to e^a$ where there singularity exists at $x=e^{i/a}$.
\begin{multline}
\int_0^{\infty } \frac{\left(x^m+x^s\right)^p}{\left(1+b x^v\right)^n}\frac{\,
   dx}{ \left(a^2-\log ^2(x)\right)} \\
=\frac{i }{2 (n-1)! a}\sum _{j=0}^{n-1} \sum _{l=0}^j \sum _{h=0}^p (-1)^{-j} b^{-\frac{1-h m+m p+h s}{v}} e^{\frac{i \pi \left(1+m p+h (-m+s)+\frac{v}{2}-n v\right)}{v}} \left(-\frac{i}{2 \pi }\right)^l \\
\left(\frac{-1-m p+h
   (m-s)+v}{v}\right)^{j-l} \binom{j}{l} \binom{p}{h} l! \\
\left(\Phi \left(e^{\frac{2 i \pi  (1+m p+h (-m+s)+v-n
   v)}{v}},1+l,\frac{\pi +i v a+i \log (b)}{2 \pi }\right)\right. \\ \left.
-\Phi \left(e^{\frac{2 i \pi  (1+m p+h (-m+s)+v-n
   v)}{v}},1+l,\frac{\pi -i a v+i \log (b)}{2 \pi }\right)\right) S_{n-1}^{(j)}
\end{multline}
where $|Re(s)|<1,|Re(r)|<1,Re(v)>0$.
\end{example}
\begin{example}
 In this example we use equation (\ref{eq:gen_poly2}) and take the first partial derivative with respect to $k$ then set $k\to 0$ and simplify. Malmsten generalized infinite integral form see equations (14) and (47) in \cite{malmsten_specimen}. 
\begin{multline}\label{eq:malm_gen_poly}
\int_0^{\infty } \frac{\left(x^m+x^r\right)^p \log (\log (x))}{\left(1+b x^v\right)^n} \, dx\\
=\sum _{j=0}^n \sum
   _{l=0}^j \sum _{h=0}^p \sum _{q=0}^l \sum _{f=0}^q \frac{(-1)^{-j-q} 4^{-l} b^{-\frac{1-h m+m p+h r}{v}} \exp
   \left(\frac{i \pi  (2+2 m p+2 h (-m+r)+v+l v-2 n v)}{2 v}\right)  }{v \Gamma (n)}\\ \times
(-l)^f \pi ^{1-2 l}\left(\frac{-1-m p+h
   (m-r)+v}{v}\right)^{j-l} \binom{j}{l} \binom{p}{h} \binom{q}{f} S_l^{(q)} S_{-1+n}^{(j)}\\
 \left(e^{i l \pi } (2 \pi
   )^l \Phi \left(e^{\frac{2 i \pi  (1+m p+h (-m+r)+v-n v)}{v}},l,\frac{\pi +i \log (b)}{2 \pi }\right) \left(-2 f+i
   \pi +2 q+2 \log \left(\frac{2 \pi }{v}\right)\right)\right. \\ \left.
+(-2)^{1+l} \pi ^l
   \Phi'\left(e^{\frac{2 i \pi  (1+m p+h (-m+r)+v-n v)}{v}},l,\frac{\pi +i \log (b)}{2 \pi
   }\right)\right)
\end{multline}
where $|Re(m)|<1,|Re(r)|<1,Re(v)>0$.
\end{example}
\begin{example}
In this example we use equation (\ref{eq:malm_gen_poly}) and set $b\to 1, n\to 2, p\to 2$ and simplify equations (\ref{eq:prelim_3}), (\ref{eq:prelim_4}) and (\ref{eq:lerch_transform}).
\begin{multline}
\int_0^{\infty } \frac{\left(\sqrt{x}+x\right)^2 \log (\log (x))}{\left(1+x^4\right)^2} \, dx\\
=\log
   \left(\sqrt[8]{-1} 2^{-\frac{1}{16} \pi  \left(-9 \sqrt[8]{-1}+6 (-1)^{3/8}-9 (-1)^{5/8}+6 (-1)^{7/8}-\sqrt{2}+3
   (-1)^{5/8} \sqrt{2}+3 \sec \left(\frac{\pi }{8}\right)\right)}\right. \\ \left.
 \exp \left(\tanh
   ^{-1}\left((-1)^{5/8}\right)+\frac{1}{2} \tanh ^{-1}\left((-1)^{3/4}\right)+\frac{1}{32} i \pi ^2
   \left(2+\sqrt{2}+3 \sec \left(\frac{\pi }{8}\right)\right)\right)\right. \\ \left.
 \pi ^{\frac{1}{16} \pi  \left(2+\sqrt{2}+3 \sec
   \left(\frac{\pi }{8}\right)\right)} \left(\frac{3 \Gamma \left(-\frac{3}{4}\right)}{\Gamma
   \left(-\frac{1}{4}\right)}\right)^{-\frac{\pi }{4}} \left(\frac{\Gamma \left(\frac{9}{16}\right)}{\Gamma
   \left(\frac{1}{16}\right)}\right)^{\frac{3}{8} \sqrt[8]{-1} \pi } \left(\frac{\Gamma
   \left(\frac{5}{8}\right)}{\Gamma \left(\frac{1}{8}\right)}\right)^{\frac{1}{8} \sqrt[4]{-1} \pi }\right. \\ \left.
   \left(\frac{\Gamma \left(\frac{3}{16}\right)}{\Gamma \left(\frac{11}{16}\right)}\right)^{\frac{3}{8} (-1)^{3/8} \pi
   } \left(\frac{\Gamma \left(\frac{13}{16}\right)}{\Gamma \left(\frac{5}{16}\right)}\right)^{\frac{3}{8} (-1)^{5/8}
   \pi } \left(\frac{\Gamma \left(\frac{3}{8}\right)}{\Gamma \left(\frac{7}{8}\right)}\right)^{\frac{1}{8} (-1)^{3/4}
   \pi } \left(\frac{\Gamma \left(\frac{7}{16}\right)}{\Gamma \left(\frac{15}{16}\right)}\right)^{\frac{3}{8}
   (-1)^{7/8} \pi }\right)
\end{multline}
where a singularity exists at $x=1$.
\end{example}
\begin{example}
In this example we use equation (\ref{eq:malm_gen_poly}) and set $m\to \frac{1}{3},r\to \frac{1}{4},b\to 1,v\to \frac{3}{2}$ and simplify;
\begin{multline}
\int_0^{\infty } \frac{\left(\sqrt[4]{x}+\sqrt[3]{x}\right)^p \log (\log (x))}{\left(1+x^{3/2}\right)^n} \,
   dx\\
=\sum _{j=0}^n \sum _{l=0}^j \sum _{h=0}^p \sum _{q=0}^l \sum _{f=0}^q \frac{(-1)^{-j-q} 2^{1-j} 3^{-1-2 j+2 l}
   e^{-\frac{1}{18} i (-21+h-9 l+18 n-4 p) \pi } (-l)^f (6+h-4 p)^{j-l}}{\Gamma
   (n)}\\ \times
 \pi ^{1-l} \binom{j}{l} \binom{p}{h}
   \binom{q}{f} S_l^{(q)} S_{-1+n}^{(j)} \left(e^{i l \pi } \Phi \left(e^{-\frac{1}{9} i (-30+h+18 n-4 p) \pi
   },l,\frac{1}{2}\right)\right. \\ \left.
 \left(-2 f+i \pi +2 q+\log \left(\frac{16}{9}\right)+2 \log (\pi )\right)-2 (-1)^l
   \Phi'\left(e^{-\frac{1}{9} i (-30+h+18 n-4 p) \pi },l,\frac{1}{2}\right)\right)
\end{multline}
where a singularity exists at $x=1$.
\end{example}
\begin{example}
In this example we use equation (\ref{eq:malm_gen_poly}) and set $p\to n,n\to n-1,\alpha \to m,\beta \to s$ and simplify;
\begin{multline}\label{eq:malm_gen_poly_k}
\int_0^{\infty } \left(\frac{x^m+x^s}{1+b x^v}\right)^n \log ^k(a x) \, dx\\
=\sum _{j=0}^{n-1} \sum _{l=0}^j \sum
   _{h=0}^n \frac{(-1)^{-j} b^{-\frac{1+m n+h (-m+s)}{v}} \exp \left(\frac{i \pi  \left(1+m n+h
   (-m+s)-\left(-\frac{1}{2}+n\right) v\right)}{v}\right) (2 \pi )^{1+k-l} }{v (-1+n)!}\\ \times
\left(1-\frac{1+m n+h(-m+s)}{v}\right)^{j-l} \left(-\frac{1}{v}\right)^l \left(\frac{i}{v}\right)^{k-l} \binom{j}{l} \binom{n}{h}\\
 \Phi\left(\exp \left(\frac{2 i \pi  (1+m n+h (-m+s)-(-1+n) v)}{v}\right),-k+l,\frac{\pi -i v \log \left(a b^{-1/v}\right)}{2 \pi }\right)\\
 (1+k-l)_l S_{-1+n}^{(j)}
\end{multline}
where $Re(v)>Re(m)+Re(s)$ and a singularity exists at $x=1/a, Re(b)>0$ and a singularity at $x=i/a,Re(b)<0$.
\end{example}
\begin{example}
In this example we use equation (\ref{eq:malm_gen_poly_k}) and set $b\to -1, a\to 1$ and simplify using equation (\ref{eq:prelim_2});
\begin{multline}
\int_0^{\infty } \left(\frac{x^m+x^s}{1-x^v}\right)^n \log ^k(x) \, dx\\
=\sum _{j=0}^{n-1} \sum _{l=0}^j \sum
   _{h=0}^n \frac{e^{-i (j+n) \pi } (2 \pi )^{1+k-l} \left(-\frac{1}{v}\right)^l \left(\frac{i}{v}\right)^{1+k-l}
   \left(\frac{-1-m n+h (m-s)+v}{v}\right)^{j-l} \binom{j}{l} \binom{n}{h} }{(-1+n)!}\\ \times
\text{Li}_{-k+l}\left(e^{\frac{2 i \pi 
   (1+m n+h (-m+s)-n v)}{v}}\right) (1+k-l)_l S_{-1+n}^{(j)}
\end{multline}
where $Re(v)>Re(m)+Re(s)$ and a singularity exists at $x=i$.
\end{example}
\section{Products of special functions, logarithm and ratio of generalized polynomial functions}
In this section we derive and evaluate infinite integral forms using equations (\ref{eq:thm1}) and (\ref{eq:thm2}). The integrands of these infinite integral forms involve the products of special functions and the logarithm and rational functions. These integrals are expressed in terms of finite series involving special functions and fundamental constants. Special cases are derived to demonstrate how these generalized integrals can be evaluated. The reciprocal logarithm integrals are also listed in \cite{nahin} pages 91-92. Integrals of generalized rational functions raised to an integer power are studied in section (3.5), [Section 4, pp. 122] in \cite{nahin}. Page 184 studies the infinite integral of the reciprocal logarithm function.
\subsection{The Bessel function $J_{v}(b x)$ and finite reciprocal Pochhammer symbol}
The Bessel function contour integral representation is derived using equations (\ref{eq:mellin_poch}), [DLMF, \href{https://dlmf.nist.gov/10.2.E2}{10.2.2}], [Weisstein, Eric W. "Binomial Theorem." From MathWorld--A Wolfram Resource., \href{https://mathworld.wolfram.com/BinomialTheorem.html}{1}], and \cite{reyn4}. The contour integral representation used in this derivation is given by;
\begin{multline}\label{eq:bessel_contour}
\frac{1}{2\pi i}\int_{C}\int_{0}^{\infty}\frac{a^w w^{-k-1} x^w \left(x^s-x^r\right)^p J_{\nu }\left(x^{\tau } \mu \right)}{\left(b
   x^v+1\right){}_{n+1}}dxdw\\
=\frac{1}{2\pi i}\int_{C}\sum _{j=0}^n \sum _{h=0}^p \sum _{l=0}^{\infty }\frac{\pi  a^w w^{-k-1} 2^{-2 l-\nu } \binom{p}{h} \binom{n}{j} (-1)^{h+j+l} \mu ^{2 l+\nu }
   b^{-\frac{h (r-s)+\tau  (2 l+\nu )+p s+w+1}{v}}  }{v l! n! \Gamma (l+\nu +1)}\\ \times
(j+1)^{\frac{h (r-s)+\tau  (2 l+\nu )+p s+w+1}{v}-1}\csc
   \left(\frac{\pi  (h (r-s)+\tau  (2 l+\nu )+p s+w+1)}{v}\right)
\end{multline}
where $Re(v)>1,0< Re(s)<1,0< Re(r)<1 $. Next we apply \cite{reyn4} and equation (5) in \cite{reyn5} to equation (\ref{eq:bessel_contour}) in order to derive the Mellin transform of the product involving the Bessel function $J_{v}(b x)$ and the reciprocal Pochhammer symbol modulated by a logarithm to a power.
\begin{theorem}
A generalized Mellin-Bessel integral with logarithmic powers and rational weight. For all $|Re(s)|<1, |Re(r)|<1, Re(b)>0,  Re(v)>1, Re(\nu)>0, Re(\mu)>0, 0< Re(\tau) < 1$ then,
\begin{multline}
\int_0^{\infty } \frac{\left(x^s-x^r\right)^p J_{\nu }\left(x^{\tau } \mu \right) \log ^k(a x)}{\left(1+b
   x^v\right){}_{1+n}} \, dx\\
=\sum _{j=0}^n \sum _{h=0}^p \sum _{l=0}^{\infty } \frac{(-1)^{h+j+l} 2^{1+k-2 l-\nu }
    e^{\frac{i \pi  (1+h (r-s)+p s+2 l \tau +\nu  \tau )}{v}}
   (1+j)^{\frac{1+h r-h s+p s-v+2 l \tau +\nu  \tau }{v}} }{v^2 l! n! \Gamma (1+l+\nu
   ) b^{\frac{1+h r-h s+p s+2 l \tau +\nu  \tau }{v}}}\\ \times
\pi ^{1+k} \left(\frac{i}{v}\right)^{-1+k} \mu ^{2 l+\nu }
   \binom{n}{j} \binom{p}{h} \Phi \left(\exp \left(\frac{2 i \pi  (1+h (r-s)+p s+2 l \tau +\nu  \tau
   )}{v}\right),\right. \\ \left.
-k,\frac{\pi -i v \log \left(a b^{-1/v} (1+j)^{1/v}\right)}{2 \pi }\right)
\end{multline}
where there exists a singularity at $x=1$.
\end{theorem}
\subsection{The hypergeometric function $\, _2F_1\left(\alpha ,\beta ;\gamma;x^{\epsilon } \delta \right)$}
To derive the contour integral representation we use the method stated in Example 9.1 along with equation [DLMF, \href{https://dlmf.nist.gov/15.2.E1}{15.2.1}] and equation [Weisstein, Eric W. "Binomial Theorem." From MathWorld--A Wolfram Resource. ,\href{https://mathworld.wolfram.com/BinomialTheorem.html}{(1)}]. The hypergeometric contour integral representation is given by;
\begin{multline}\label{eq:hyper_contour}
\frac{1}{2\pi i}\int_{C}\int_{0}^{\infty}a^w w^{-1-k} x^w \left(-x^r+x^s\right)^p \left(1+b x^v\right)^{-1-n} \, _2F_1\left(\alpha ,\beta ;\gamma
   ;x^{\epsilon } \delta \right)dxdw\\
=\frac{1}{2\pi i}\int_{C}\sum _{j=0}^n \sum _{l=0}^j \sum _{h=0}^p \sum _{g=0}^{\infty }\frac{(-1)^{h-j} a^w b^{-\frac{1+h (r-s)+p s+w+g \epsilon }{v}} \pi 
   \left(-\frac{1}{v}\right)^l w^{-1-k+l} \delta ^g }{v g! n! (\gamma )_g}\\ \times
\left(-\frac{1+h (r-s)+p s+(-1+n) v-n v+g \epsilon
   }{v}\right)^{j-l} \binom{j}{l} \binom{p}{h} \\
\csc \left(\frac{\pi  (1+h (r-s)+p s-n v+w+g \epsilon )}{v}\right)
   (\alpha )_g (\beta )_g S_n^{(j)}dw
\end{multline}
where $0< Re(r)<1,0< Re(s)<1$.
Next we apply \cite{reyn4} and equation (5) in \cite{reyn5} to equation (\ref{eq:hyper_contour}) in order to derive the Mellin transform of the product involving the Bessel function $J_{v}(b x)$ and the reciprocal Pochhammer symbol modulated by a logarithm to a power. 
\begin{theorem}
A generalized Hypergeometric-Mellin integral with binomial and logarithmic powers.
\begin{multline}\label{eq:thm_hypergeom}
\int_0^{\infty } \frac{\left(x^s-x^r\right)^p \, _2F_1\left(\alpha ,\beta ;\gamma ;x^{\epsilon } \delta \right)
   \log ^k(a x)}{\left(1+b x^v\right)^{n+1}} \, dx\\
=-\sum _{j=0}^n \sum _{l=0}^j \sum _{h=0}^p \sum _{g=0}^{\infty }
   \frac{(1+k-l)_l (-1)^{h-j} b^{-\frac{1+h r-h s+p s+g \epsilon }{v}} e^{\frac{i \pi  (1+h (r-s)+p s-n v+g \epsilon)}{v}} (2 \pi )^{1+k-l} }{g! n! (\gamma )_g}\\ \times
\left(-\frac{1}{v}\right)^l \left(\frac{i}{v}\right)^{1+k-l} \delta ^g \left(-\frac{1+h r-hs+p s-v+g \epsilon }{v}\right)^{j-l} \binom{j}{l} \binom{p}{h}\\
 \Phi \left(e^{\frac{2 i \pi  (1+h (r-s)+p s-n v+g\epsilon )}{v}},-k+l,\frac{\pi -i v \log \left(a b^{-1/v}\right)}{2 \pi }\right) (\alpha )_g (\beta )_g
   S_n^{(j)}
\end{multline}
where $|Re(s)|<1,|Re(r)|<1,|Re(m)|<1, Re(\alpha)>0, Re(\beta)>0, Re(\gamma)>0,Re(\epsilon)>0,Re(\delta)>0, Re(v)>1$
\end{theorem}
\begin{example}
A generalized Mellin-type integral with Modified Bessel function and logarithmic powers. In this example we use equation (\ref{eq:thm_hypergeom}) and set $\alpha \to \frac{1}{2},\beta \to \frac{1}{2},\gamma \to 1$ and simplify using equation [DLMF, \href{https://dlmf.nist.gov/15.9.E24}{15.9.24}] and simplify;
\begin{multline}
\int_0^{\infty } \frac{\left(x^s-x^r\right)^p K\left(x^{\epsilon } \delta \right) \log ^k(a x)}{\left(1+b
   x^v\right)^{n+1}} \, dx\\
=\sum _{j=0}^n \sum _{l=0}^j \sum _{h=0}^p \sum _{g=0}^{\infty } -\frac{(-1)^{h-j} 2^{k-l}
   b^{-\frac{1+h r-h s+p s+g \epsilon }{v}} e^{\frac{i \pi  (1+h (r-s)+p s-n v+g \epsilon )}{v}} \pi ^{2+k-l}
   \left(-\frac{1}{v}\right)^l \left(\frac{i}{v}\right)^{1+k-l} \delta ^g }{g! n! (1)_g}\\ \times
\left(-\frac{1+h r-h s+p s-v+g \epsilon
   }{v}\right)^{j-l} \binom{j}{l} \binom{p}{h}\\
 \Phi \left(e^{\frac{2 i \pi  (1+h (r-s)+p s-n v+g \epsilon
   )}{v}},-k+l,\frac{\pi -i v \log \left(a b^{-1/v}\right)}{2 \pi }\right) \left(\left(\frac{1}{2}\right)_g\right){}^2
   (1+k-l)_l S_n^{(j)}
\end{multline}
\end{example}
\begin{example}
A Hypergeometric-Mellin integral with logarithmic resonance denominator. In this example we use equation (\ref{eq:thm_hypergeom}) and set $k\to -1$ and simplify. Here we are looking at the reciprocal logarithm form.
\begin{multline}
\int_0^{\infty } \frac{\left(-x^r+x^s\right)^p \left(1+b x^v\right)^{-1-n} \, _2F_1\left(\alpha ,\beta ;\gamma
   ;x^{\epsilon } \delta \right)}{\log ^2(a)-\log ^2(x)} \, dx\\
=\sum _{j=0}^n \sum _{l=0}^j \sum _{h=0}^p \sum
   _{g=0}^{\infty } \frac{(-1)^{h-j+l} 2^{-1-l} b^{-\frac{1+h r-h s+p s+g \epsilon }{v}} e^{\frac{i \pi  (1+h (r-s)+p
   s-n v+g \epsilon )}{v}} \pi ^{-l} \left(-\frac{1}{v}\right)^l \left(\frac{i}{v}\right)^{-l} \delta ^g
   }{g! n! \log (a) (\gamma )_g}\\ \times
\left(-\frac{1+h r-h s+p s-v+g \epsilon }{v}\right)^{j-l} \binom{j}{l} \binom{p}{h} l! \\
\left(\Phi \left(e^{\frac{2
   i \pi  (1+h (r-s)+p s-n v+g \epsilon )}{v}},1+l,\frac{\pi -i v \log \left(\frac{b^{-1/v}}{a}\right)}{2 \pi
   }\right)\right. \\ \left.
-\Phi \left(e^{\frac{2 i \pi  (1+h (r-s)+p s-n v+g \epsilon )}{v}},1+l,\frac{\pi -i v \log \left(a
   b^{-1/v}\right)}{2 \pi }\right)\right) (\alpha )_g (\beta )_g S_n^{(j)}
\end{multline}
where $Re(a)< e^{\pi i/2}$ and there exists a singularity at $x=ie^{-ia}$.
\end{example}
\section{Ratio of Pochhammer and $q$-Pochhammer symbols}
In this section we look at the Mellin transform of quotient pochammer and $q$-Pochhammer symbols modulated by logarithm to a power. These integrals are expressed in terms of series involving special functions. The equations used to derive the contour integral representations in this section are equation (\ref{eq:mellin_poch}), [Wolfram MathWorld, \href{https://mathworld.wolfram.com/PochhammerSymbol.html}{(8)}], and [Wolfram MathWorld, \href{https://mathworld.wolfram.com/BinomialSeries.html}{(1)}].
\subsection{Ratio of Pochhammer symbols}
Contour integral representation involving the ratio of Pochhammer symbols is given by; 
\begin{multline}\label{eq:poch_contour}
\frac{1}{2\pi i}\int_{C}\int_{0}^{\infty}\frac{a^w w^{-k-1} x^{m+w-1} \left(b x^u+1\right){}_{p+1}}{\left(c x^v+1\right){}_{n+1}}dxdw\\
=\frac{1}{2\pi i}\int_{C}\sum _{j=0}^n \sum _{q=0}^{p+1} \sum _{l=0}^q\frac{\pi  a^w b^l
   w^{-k-1} \binom{n}{j} \binom{q}{l} (-1)^{j+p-q+1} S_{p+1}^{(q)} c^{-\frac{l u+m+w}{v}} (j+1)^{\frac{l u+m+w}{v}-1}}{v n!}\\ \times
\csc \left(\frac{\pi  (l u+m+w)}{v}\right)dw
\end{multline}
where $Re(u)>1,Re(v)>1$.
The theorem involving the ratio of Pochhammer symbols is derived using \cite{reyn4}, equations (\ref{eq:poch_contour}) and equation (5) in \cite{reyn5}.
\begin{theorem}
A generalized power–rational integral with logarithmic weight. Finite Pochhammer form of Ramanujan's Integral [DLMF,\href{https://dlmf.nist.gov/17.13.E3}{17.13}].
\begin{multline}
\int_0^{\infty }x^{-1+m} \frac{ \left(1+b x^u\right){}_{1+p} }{\left(1+c x^v\right){}_{1+n}}\log ^k(a x) \, dx\\
=\sum _{j=0}^n \sum _{q=0}^{p+1} \sum _{l=0}^q \frac{(-1)^{j+p-q} b^l
   c^{-\frac{m+l u}{v}} e^{\frac{i \pi  (m+l u)}{v}} (1+j)^{-1+\frac{m+l u}{v}} (2 \pi )^{1+k} \left(\frac{i}{v}\right)^{1+k} \binom{n}{j} \binom{q}{l} }{n!}\\ \times
\Phi \left(e^{\frac{2 i \pi  (m+l
   u)}{v}},-k,\frac{\pi -i v \log \left(a c^{-1/v} (1+j)^{1/v}\right)}{2 \pi }\right) S_{1+p}^{(q)}
\end{multline}
where $|Re(m)| < 1$. There exists a singularity at $x=1/a$ when $Re(b)>0$.
\end{theorem}
\subsection{Extended Luo-Luo integrals in terms of series involving the Hurwitz-Lerch zeta function}
In this section we derive Mellin transforms of quotient finite $q$-Pochhammer symbols modulated by logarithm to a power. We start by deriving the reciprocal finite $q$-Pochhammer symbol using Corollary (18) in \cite{luo} given by;
\begin{equation}\label{eq:q_poch_reciprocal_series}
\frac{1}{(x;q)_n}=\sum _{m=0}^{n-1} \frac{(-1)^m q^{\frac{1}{2} m (m+1)}}{(q;q)_m (q;q)_{n-1-m}
   \left(1-x q^m\right)}=\sum _{k=0}^{\infty } \binom{n+k-1}{k}_q x^k
\end{equation}
where $0< Re(x)<1,0< Re(q)<1$. The contour integral representation for the reciprocal finite $q$-Pochhammer symbol is given by; 
\begin{multline}\label{eq:contour_p_poch}
\frac{1}{2\pi i}\int_{C}\int_{0}^{\infty}\frac{a^w w^{-k-1} x^{m+w-1}}{\left(b x^v;q\right){}_n}dxdw\\
=\frac{1}{2\pi i}\int_{C}\sum_{j=0}^{n-1}\frac{\pi  (-1)^j a^w q^{\frac{1}{2} j (j+1)}
   w^{-k-1} \csc \left(\frac{\pi  (m+w)}{v}\right) \left(-b q^j\right)^{-\frac{m+w}{v}}}{v (q;q)_j
   (q;q)_{-j+n-1}}dw
\end{multline}
where $Re(v)>0$. Next we apply \cite{reyn4} to equation (\ref{eq:contour_p_poch}) to derive the Mellin transform of the reciprocal finite $q$-Pochhammer symbol modulated by logarithm to a power given by;
\begin{theorem}
A generalized q-Mellin integral with logarithmic powers and q-Pochhammer denominator.
\begin{multline}\label{eq:q_poch_thm}
\int_0^{\infty } \frac{x^{-1+m} \log ^k(a x)}{\left(b x^v;q\right){}_n} \, dx\\
=\sum _{j=0}^{n-1} \frac{(-1)^j
   e^{\frac{i m \pi }{v}} (2 \pi )^{1+k} q^{\frac{1}{2} j (1+j)} \left(-b q^j\right)^{-\frac{m}{v}}
   \left(\frac{i}{v}\right)^{-1+k} \Phi \left(e^{\frac{2 i m \pi }{v}},-k,\frac{\pi -i v \log \left(a \left(-b
   q^j\right)^{-1/v}\right)}{2 \pi }\right)}{v^2 (q;q)_j (q;q)_{-1-j+n}}
\end{multline}
where $0< Re(m)<1,Re(b)<0$ and there exists a singularity at $x=1/a$.
\end{theorem}
\begin{example}
Logarithmic moments of the reciprocal linear kernel. In this example we use equation (\ref{eq:q_poch_thm}) and set $v\to 1,n\to 1$ and simplify;
\begin{multline}
\int_0^{\infty } \frac{x^{-1+m} \log ^k(a x)}{1-b x} \, dx=i^{-1+k} (-b)^{-m} e^{i m \pi } (2 \pi )^{1+k}
   \Phi \left(e^{2 i m \pi },-k,\frac{\pi -i \log \left(-\frac{a}{b}\right)}{2 \pi }\right)
\end{multline}
where $Re(m)>0$.
\end{example}
\begin{example}
A Mellin-ype integral with $q$-Pochhammer modulation and logarithmic singularity. In this example we use equation (\ref{eq:q_poch_thm}) and set $k\to -1$ and simplify;
\begin{multline}\label{eq:q_poch_thm_2}
\int_0^{\infty } \frac{x^{-1+m}}{(\log (a)+\log (x)) \left(b x^v;q\right){}_n} \, dx\\
=\sum _{j=0}^{n-1}
   -\frac{(-1)^j e^{\frac{i m \pi }{v}} q^{\frac{1}{2} j (1+j)} \left(-b q^j\right)^{-\frac{m}{v}} \Phi
   \left(e^{\frac{2 i m \pi }{v}},1,\frac{\pi -i v \log \left(a \left(-b q^j\right)^{-1/v}\right)}{2 \pi
   }\right)}{(q;q)_j (q;q)_{-1-j+n}}
\end{multline}
where $Re(m)>0$ and there exists a singularity at $x=a i$.
\end{example}
\begin{example}
Integral with quadratic logarithmic denominator and $q$-Pochhammer kernel. In this example we use equation (\ref{eq:q_poch_thm_2}) and form a second equation replacing $a\to 1/a$ and take their difference and simplify; 
\begin{multline}\label{eq:q_poch_thm_3}
\int_0^{\infty } \frac{x^{-1+m}}{\left(\log ^2(a)-\log ^2(x)\right) \left(b x^v;q\right){}_n} \, dx\\
=\sum
   _{j=0}^{n-1} -\frac{(-1)^j e^{\frac{i m \pi }{v}} q^{\frac{1}{2} j (1+j)} \left(-b q^j\right)^{-\frac{m}{v}}
   }{2 \log (a) (q;q)_j (q;q)_{-1-j+n}}\\ \times
\left(-\Phi \left(e^{\frac{2 i m \pi }{v}},1,\frac{\pi -i v \log \left(\frac{\left(-b
   q^j\right)^{-1/v}}{a}\right)}{2 \pi }\right)+\Phi \left(e^{\frac{2 i m \pi }{v}},1,\frac{\pi -i v \log \left(a
   \left(-b q^j\right)^{-1/v}\right)}{2 \pi }\right)\right)
\end{multline}
where $Re(m)>0$ and there exists a singularity at $x=i/a$.
\end{example}
\begin{example}
Log-quadratic denominator integral modulated by a $q$-Pochhammer factor. In this example we use equation (\ref{eq:q_poch_thm_3}) and set $a\to e^{a}$ and simplify;
\begin{multline}\label{eq:q_poch_thm_4}
\int_0^{\infty } \frac{x^{-1+m}}{\left(a^2-\log ^2(x)\right) \left(b x^v;q\right){}_n} \, dx
=-\sum
   _{j=0}^{n-1} \frac{(-1)^j e^{\frac{i m \pi }{v}} q^{\frac{1}{2} j (1+j)} \left(-b q^j\right)^{-\frac{m}{v}}
  }{2 a (q;q)_j (q;q)_{-1-j+n}}\\ \times
 \left(-\Phi \left(e^{\frac{2 i m \pi }{v}},1,\frac{\pi -i v \log \left(\frac{\left(-b q^j\right)^{-1/v}}{\exp
   (a)}\right)}{2 \pi }\right)\right. \\ \left.
+\Phi \left(e^{\frac{2 i m \pi }{v}},1,\frac{\pi -i v \log \left(\exp (a) \left(-b
   q^j\right)^{-1/v}\right)}{2 \pi }\right)\right)
\end{multline}
where $Re(m)>0$ and there exists a singularity at $x=a$.
\end{example}
\begin{example}
Evaluation of a log-quadratic denominator integral involving a $q$-shifted product. In this example we use equation (\ref{eq:q_poch_thm_4}) and set $a\to e^{ai}$ and simplify;
\begin{multline}
\int_0^{\infty } \frac{x^{-1+m}}{\left(a^2+\log ^2(x)\right) \left(b x^v;q\right){}_n} \, dx
=\sum
   _{j=0}^{n-1} \frac{i (-1)^j e^{\frac{i m \pi }{v}} q^{\frac{1}{2} j (1+j)} \left(-b q^j\right)^{-\frac{m}{v}}
  }{2 a (q;q)_j (q;q)_{-1-j+n}}\\ \times
 \left(\Phi \left(e^{\frac{2 i m \pi }{v}},1,\frac{\pi -i v \log \left(e^{-i a} \left(-b
   q^j\right)^{-1/v}\right)}{2 \pi }\right)\right. \\ \left.
-\Phi \left(e^{\frac{2 i m \pi }{v}},1,\frac{\pi -i v \log \left(e^{i a}
   \left(-b q^j\right)^{-1/v}\right)}{2 \pi }\right)\right)
\end{multline}
where $Re(m)>0$ and there exists a singularity at $x=ai$.
\end{example}
\begin{example}
Malmsten $q$-Pochhammer integral form. In this example we use equation (\ref{eq:q_poch_thm}) and take the first partial derivative with respect to $k$ then set $k\to 0$ and simplify;
\begin{multline}
\int_0^{\infty } \frac{x^{-1+m} \log (\log (x))}{(b x;q)_n} \, dx
=-\sum _{j=0}^{n-1} \frac{e^{i (j+m) \pi }
   \pi  q^{\frac{1}{2} j (1+j)} \left(-b q^j\right)^{-m} }{\left(-1+e^{2 i m \pi }\right) (q;q)_j (q;q)_{-1-j+n}}\\ \times
\left(\pi -2 i \log (2 \pi )+4 e^{i m \pi } \sin (m \pi )
   \Phi'\left(e^{2 i m \pi },0,\frac{\pi -i \log \left(-\frac{q^{-j}}{b}\right)}{2 \pi
   }\right)\right)
\end{multline}
where $Re(m)>0$ and there exists a singularity at $x=1$.
\end{example}
\begin{example}
A $q$-Pochhammer weighted integral of logarithmic moments. In this example we use equation (\ref{eq:q_poch_thm}) and set $a\to 1,b\to -1,v\to 1,q\to \frac{1}{q},m\to 1$ then set $k\to 0$ and simplify;
\begin{equation}\label{eq:q_poch_hurwitz}
\int_0^{\infty } \frac{\log ^k(x)}{\left(-x;\frac{1}{q}\right)_n} \, dx=\sum _{j=0}^{n-1} \frac{i (-1)^j i^k
   (2 \pi )^{1+k} q^{-\frac{1}{2} (-1+j) j} \zeta \left(-k,\frac{\pi -i j \log (q)}{2 \pi
   }\right)}{\left(\frac{1}{q};\frac{1}{q}\right)_j \left(\frac{1}{q};\frac{1}{q}\right)_{-1-j+n}}
\end{equation}
where $k\in\mathbb{C}$.
\end{example}
\begin{example}
Malmsten $q$-Pochhammer integral form. In this example we use equation (\ref{eq:q_poch_hurwitz}) and take the first partial derivative with respect to $k$ and set $k\to 0$ and simplify using equation [DLMF, \href{https://dlmf.nist.gov/25.11.E18}{25.11.18}];
\begin{multline}
\int_0^{\infty } \frac{\log (\log (x))}{\left(-x;\frac{1}{q}\right)_n} \, dx\\
=\frac{i}{2}\sum _{j=0}^{n-1} \frac{(-1)^j q^{-\frac{1}{2} (j-1) j} \left(4 \pi  \log \left(\frac{2 \sqrt{2} \pi ^{3/2}}{\Gamma \left(-\frac{\pi +i j \log (q)}{2 \pi }\right) (\pi +i j \log (q))}\right)+2 i j \log (2 \pi  i) \log (q)\right)}{ \left(\frac{1}{q};\frac{1}{q}\right)_j
   \left(\frac{1}{q};\frac{1}{q}\right)_{n-1-j}}
\end{multline}
where $Re(q)>0$ and there exists a singularity at $x=1$.
\end{example}
%
%
%
\subsection{Mellin transform of finite quotient $q$-Pochhammer symbols modulated by logarithm to a power}
In this section we study integrals involving $q$-Pochhammer symbols. The $q$-Pochhammer symbol appears in the theory of basic hypergeometric series, $q$-analogues of special functions, and partition theory. Understanding their Mellin transforms may lead to deeper analysis of their asymptotic behaviour, connections with $q$-gamma and $q$-beta functions, and potential applications in quantum calculus and modular phenomena. The inclusion of a logarithmic modulation introduces analytic properties, often tied to derivative-like behaviour in Mellin space and the study of logarithmic moments. Mellin transform of quotient $q$-Pochhammer symbols is listed in \cite{berndt_ram3} on page 29.\\\\
The contour integral representation of quotient $q$-Pochhammer symbols is given by;
\begin{multline}\label{eq:contour_q_poch_quotient}
\frac{1}{2\pi i}\int_{C}\int_{0}^{\infty}\frac{a^w w^{-k-1} x^{m+w-1} \left(b x^u;p\right){}_l}{\left(c x^v;q\right){}_n}dxdw\\
=\frac{1}{2\pi i}\int_{C}\sum_{j=0}^l \sum _{h=0}^n\frac{\pi  (-1)^h a^w
   (-b)^j q^{\frac{1}{2} h (h+1)} p^{\frac{1}{2} (j-1) j} w^{-k-1} \binom{l}{j}_p}{v (q;q)_h (q;q)_{-h+n-1}}\\ \times
 \csc \left(\frac{\pi  (j
   u+m+w)}{v}\right) \left(-c q^h\right)^{-\frac{j u+m+w}{v}}dw
\end{multline}
where $k,w\in\mathbb{C}$. It's derivation is yielded by first applying the series in equation [Wolfram MathWorld, \href{https://mathworld.wolfram.com/q-PochhammerSymbol.html}{(4)}] to equation (\ref{eq:q_poch_reciprocal_series}) followed by taking the Mellin transform, then applying the method in \cite{reyn4}.
\begin{theorem}
Mellin transform involving quotient finite $q$-Pochhammer symbols. This is a finite case of the generalized form for equation (5.16) in \cite{berndt_ram4} involving the logarithm function. Similar integrals were studied by Ramanujan in \cite{ramanujan_messenger}.
\begin{multline}
\int_0^{\infty } \frac{x^{-1+m} \log ^k(a x) \left(b x^u;p\right){}_l}{\left(c x^v;q\right){}_n} \, dx\\
=\sum_{j=0}^l \sum _{h=0}^n \frac{(-1)^h (-b)^j e^{\frac{i \pi  (m+j u)}{v}} p^{\frac{1}{2} (-1+j) j} (2 \pi )^{1+k}q^{\frac{1}{2} h (1+h)} \left(-c q^h\right)^{-\frac{m+j u}{v}} \left(\frac{i}{v}\right)^{-1+k} }{v^2 (q;q)_h (q;q)_{-1-h+n}}\\ \times
\Phi
   \left(e^{\frac{2 i \pi  (m+j u)}{v}},-k,\frac{\pi -i v \log \left(a \left(-c q^h\right)^{-1/v}\right)}{2 \pi
   }\right) \binom{l}{j}_p
\end{multline}
where $n > 2 l, 0< Re(m) \leq 1/l, 0< Re(q)<1, 0< Re(p)<1,Re(c) < 0$ and there exists a singularity at $x=1/a$ when $Re(a)>0$. The derivation of this theorem is due to the method in \cite{reyn4}, applied to equation (\ref{eq:contour_q_poch_quotient}) and equation (5) in \cite{reyn5}.
\end{theorem}
\subsection{Extended Ramanujan integral involving the $q$-Pochhammer symbol}
In this section we look at a special case of equation (5.16) in \cite{berndt_ram4} and apply the contour integral method in \cite{reyn4} to derive some definite integrals. The process of derivation involves setting $a=1$ in equation (5.16) in \cite{berndt_ram4}, then applying the method in \cite{reyn4}. Next we use cite{reyn4} and equation (5) in \cite{reyn5} to write down the Mellin transform.  These integrals involve the Mellin transform of the product of quotient $q$-pochhamer symbols modulated by a logarithm raised to a power and is expressed in terms of the Hurwitz-Lerch zeta function. We will evaluate this integrals in terms of composite functions of the Hurwitz-Lerch zeta function and fundamental constants.
\begin{theorem}
A q-analog Mellin integral with bilateral q-Pochhammer ratio and logarithmic powers.
\begin{multline}\label{eq:ram_q_poch_inf}
\int_0^{\infty } \frac{x^{-1+m} \log ^k(a x) (-q x;q)_{\infty }}{(-x;q)_{\infty }} \, dx=-i i^k e^{i m \pi } (2
   \pi )^{1+k} \Phi \left(e^{2 i m \pi },-k,\frac{\pi -i \log (a)}{2 \pi }\right)
\end{multline}
where $0< Re(q)<1$.
\end{theorem}
\begin{example}
In this example we use equation (\ref{eq:ram_q_poch_inf}) and set $m\to 1/2$ and simplify in terms of the Hurwitz zeta function using equation [Wolfram Research Functions, \href{http://functions.wolfram.com/10.06.03.0072.01}{1}].
\begin{multline}\label{eq:ram_q_poch_inf0}
\int_0^{\infty } \frac{\log ^k(a x) (-q x;q)_{\infty }}{\sqrt{x} (-x;q)_{\infty }} \, dx\\
=i^k (2 \pi )^{1+k} \left(-2^k \zeta \left(-k,\frac{1}{2} \left(1+\frac{\pi -i \log (a)}{2 \pi
   }\right)\right)+2^k \zeta \left(-k,\frac{\pi -i \log (a)}{4 \pi }\right)\right)
\end{multline}
where $0< Re(q)<1$ and there exists a singularity at $x=1/a$.
\end{example}
\begin{example}
In this example we use equation (\ref{eq:ram_q_poch_inf0}) and take the first partial derivative with respect to $k$ and set $k\to 0$ and simplify using equation [DLMF, \href{https://dlmf.nist.gov/25.11.E18}{25.11.18}]. 
\begin{equation}\label{eq:ram_q_poch_inf1}
\int_0^{\infty } \frac{\log (\log (a x)) (-q x;q)_{\infty }}{\sqrt{x} (-x;q)_{\infty }} \, dx=\pi  \log
   \left(\frac{4 \pi  i \Gamma \left(\frac{3 \pi -i \log (a)}{4 \pi }\right)^2}{\Gamma \left(\frac{\pi -i \log (a)}{4
   \pi }\right)^2}\right)
\end{equation}
where $0< Re(q)<1$ and there exists a singularity at $x=1$.
\end{example}
\begin{example}
A Malmsten integral form involving quotient $q$-Pochhammer symbols. In this example we use equation (\ref{eq:ram_q_poch_inf1}) and set $a\to 1,q\to 1/2$ and simplify. We also provide a few plots showing the singularity at $x=1$; 
 \begin{equation}\label{eq:ram_q_poch_inf2}
\int_0^{\infty }\frac{\log (\log (x))}{\sqrt{x}}  \frac{\left(-\frac{x}{2};\frac{1}{2}\right)_{\infty }}{ \left(-x;\frac{1}{2}\right)_{\infty }} \, dx=\pi  \log \left(\frac{4 i \pi  \Gamma
   \left(\frac{3}{4}\right)^2}{\Gamma \left(\frac{1}{4}\right)^2}\right)
\end{equation}
\end{example}
\begin{figure}[H]
\includegraphics[scale=0.6]{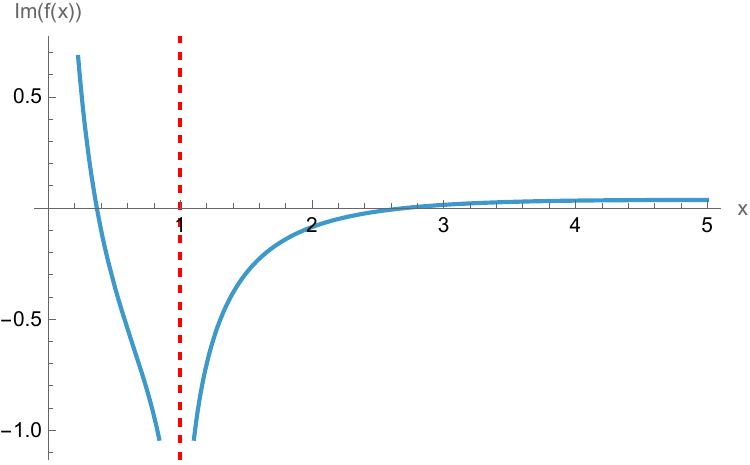}
\caption{ Plot of the $Re\left(\frac{\left(-\frac{x}{2};\frac{1}{2}\right)_{\infty } \log (\log (x))}{\sqrt{x} \left(-x;\frac{1}{2}\right)_{\infty }}\right)$}
   \label{fig:fig2}
\end{figure}
\begin{figure}[H]
\includegraphics[scale=0.6]{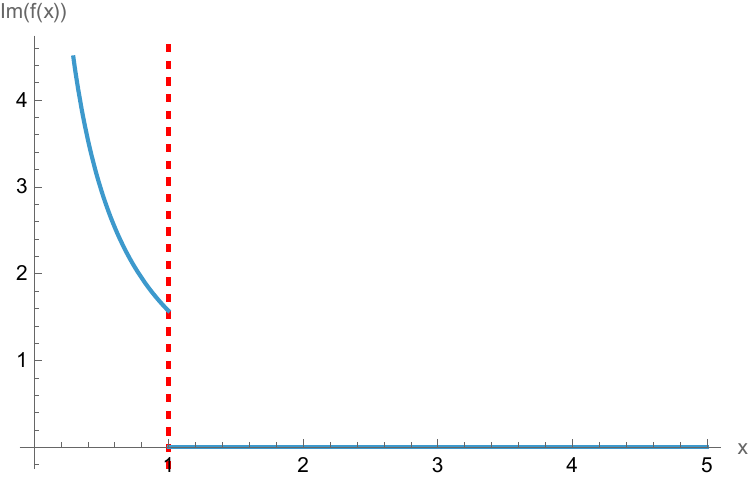}
\caption{ Plot of the $Im\left(\frac{\left(-\frac{x}{2};\frac{1}{2}\right)_{\infty } \log (\log (x))}{\sqrt{x} \left(-x;\frac{1}{2}\right)_{\infty }}\right)$}
   \label{fig:fig2}
\end{figure}
\begin{example}
In this example we use equation (\ref{eq:ram_q_poch_inf0}) and take the limit using l'Hopital's rule as $k\to -1$ and simplify using equation [Wolfram Research Functions, \href{http://functions.wolfram.com/10.02.25.0001.0}{1}].
\begin{equation}\label{eq:ram_q_poch_inf3}
\int_0^{\infty } \frac{(-q x;q)_{\infty }}{\sqrt{x} \left(a^2 \pi ^2+\log ^2(x)\right) (-x;q)_{\infty }} \,
   dx=\frac{\psi ^{(0)}\left(\frac{a+3}{4}\right)-\psi ^{(0)}\left(\frac{a+1}{4}\right)}{2 a \pi }
\end{equation}
where $0< Re(q)<1$ and there exists a singularity at $x=ie^a$.
\end{example}
\begin{example}
In this example we use equation (\ref{eq:ram_q_poch_inf}) and set $a\to -1$ and simplify using [DLMF, \href{https://dlmf.nist.gov/25.14.E2}{25.14.3}]. 
\begin{equation}
\int_0^{\infty } \frac{x^{-1+m} \log ^k(-x) (-q x;q)_{\infty }}{(-x;q)_{\infty }} \, dx=-i i^k e^{-i m \pi } (2 \pi )^{1+k} \text{Li}_{-k}\left(e^{2 i m \pi }\right)
\end{equation}
where $0< Re(q)<1$ and there exists a singularity at $x=1/a$.
\end{example}
\begin{example}
In this example we use equation (\ref{eq:ram_q_poch_inf}) and replace $m\to m+1$. Then we form a second equation by replacing $m\to s$ then take their difference and set $k\to -1, a\to 1$ and simplify;
\begin{equation}
\int_0^{\infty } \frac{\left(x^s-x^m\right) (-q x;q)_{\infty }}{\log (x) (-x;q)_{\infty }} \, dx=\log \left(\frac{\tan \left(\frac{\pi  m}{2}\right)}{\tan \left(\frac{s \pi
   }{2}\right)}\right)
\end{equation}
where $Re(m)<0,Re(s)<0,0< Re(q)<1$.
\end{example}
\begin{example}
In this example we use the method in equation [Wolfram MathWorld, \href{https://mathworld.wolfram.com/LerchTranscendent.html}{(8)}] and apply it to equation (\ref{eq:ram_q_poch_inf});
\begin{multline}
\int_0^{\infty } \frac{\log (-x) \log (\log (-x)) (-q x;q)_{\infty }}{\sqrt{x} (-x;q)_{\infty }} \, dx=-\frac{\pi ^3}{2}-4 i \pi ^2 \log \left(\frac{A^3}{\sqrt[3]{2} \sqrt[4]{e}}\right)+i \pi ^2 \log
   (2 \pi )
\end{multline}
where $0< Re(q)<1$.
\end{example}
\begin{example}
In this example we use the method in equation [Wolfram MathWorld, \href{https://mathworld.wolfram.com/LerchTranscendent.html}{(9)}] and apply it to equation (\ref{eq:ram_q_poch_inf});
\begin{equation}
\int_0^{\infty } \frac{\log ^2(-x) \log (\log (-x)) (-q x;q)_{\infty }}{\sqrt{x} (-x;q)_{\infty }} \, dx=14 \pi  \zeta (3)
\end{equation}
where $0< Re(q)<1$.
\end{example}
\begin{example}
In this example we use the method in equation [Wolfram MathWorld, \href{https://mathworld.wolfram.com/LerchTranscendent.html}{(10)}] and apply it to equation (\ref{eq:ram_q_poch_inf});
\begin{equation}
\int_0^{\infty } \frac{\log (x) \log (\log (x)) (-q x;q)_{\infty }}{\sqrt{x} (-x;q)_{\infty }} \, dx=-4 i C 
\end{equation}
where $0< Re(q)<1$ and $C$ is Catalan's constant given in [Wolfram MathWorld, \href{https://mathworld.wolfram.com/CatalansConstant.html}{(1)}].
\end{example}
\begin{example}
In this example we use equation [arXiv:2412.10395 [math.GM], \href{https://arxiv.org/pdf/2412.10395}{(1)}]  and apply it to equation (\ref{eq:ram_q_poch_inf});
\begin{multline}
\int_0^{\infty } \frac{\log (\log (a x)) (-q x;q)_{\infty }}{x^{3/4} (-x;q)_{\infty }} \, dx=\log \left(\frac{e^{\frac{i \pi ^2}{\sqrt{2}}} (8 \pi )^{\sqrt{2} \pi } \left(\frac{\Gamma \left(\frac{\pi
   -i \log (a)}{8 \pi }\right)}{\Gamma \left(\frac{5}{8}-\frac{i \log (a)}{8 \pi }\right)}\right)^{2 (-1)^{3/4} \pi }}{\left(\frac{\Gamma \left(\frac{3}{8}-\frac{i \log (a)}{8 \pi }\right)}{\Gamma
   \left(\frac{7}{8}-\frac{i \log (a)}{8 \pi }\right)}\right)^{2 \sqrt[4]{-1} \pi }}\right)
\end{multline}
where $0< Re(q)<1$ and there exists a singularity at $x=1/a$.
\end{example}
\section{Mellin transforms involving the product of Special functions}
In this section we extend the finite series listed in the \cite{prud2,prud3,brychkov}  by deriving a table of infinite integrals in terms of series. To achieve this table we applied the contour integral method in \cite{reyn4} to derive the propositions that follow. The derivations start by applying equation (\ref{eq:contour_gen_rational}) to the cited equations then applying the method in \cite{reyn4} to derive the Mellin transform involving special functions modulated by a logarithm raised to a power. These integral transforms are expressed in terms of series involving the product of special functions. The contour integral representation in equation (\ref{eq:contour_gen_rational}) is derived by applying the method in \cite{reyn4} to equation (3.194.4) in \cite{grad}, [Wolfram MathWorld, \href{https://mathworld.wolfram.com/PochhammerSymbol.html}{(8)}] and [Wolfram MathWorld, \href{https://mathworld.wolfram.com/BinomialTheorem.html}{(2)}]. Similar types of integrals over the real line were studied by Ramanujan in equations (3.14), (5.4-5.43) and (10.2) in \cite{ramanujan_qj}.
\begin{theorem}
Contour integral representation for the Mellin transform of a generalized rational polynomial. 
\begin{multline}\label{eq:contour_gen_rational}
\frac{1}{2\pi i}\int_{C}\int_{0}^{\infty}a^w w^{-k-1} \left(b x^v+1\right)^{-n-1} x^{m+n v+w}dxdw\\
=\frac{1}{2\pi i}\int_{C}\sum_{j=0}^{n}\sum_{l=0}^{j}\frac{\pi  (-1)^{-j} a^w \left(-\frac{1}{v}\right)^l
   \binom{j}{l} S_n^{(j)} w^{-k+l-1} \csc \left(\frac{\pi  (m+w+1)}{v}\right) b^{-\frac{m+n v+w+1}{v}}
  }{v n!}\\ \times
 \left(-\frac{m+(n-1) v+1}{v}\right)^{j-l}
\end{multline}
where $Re(v)>1$.
\end{theorem}
We take equation (\ref{eq:contour_gen_rational}) and multiply it by the left-hand side of equation (5.7.6.1) in \cite{prud2} given by $\frac{J_{p+\nu }(z)}{p!}$. Next we replace the variable $m\to m+p$. Then we replace the variables in equation (5.7.6.1) with $t\to x,k\to p,v\to \nu$. Next we take equation (5.7.6.1) and multiply it by $\left(b x^v+1\right)^{-n-1} x^{m+n v+w}$ and $a^w w^{-k-1}$ which converts equation (5.7.6.1) into the Cauchy contour integral representation. Since the left-hand sides of both transformed equations are equal we may equate the right-hand sides to yield the Cauchy contour integral representation given by;
\begin{multline}\label{eq:contour_gen_rational1}
\frac{1}{2\pi i}\int_{C}\int_{0}^{\infty}a^w w^{-k-1} z^{\nu /2} (z-2 x)^{-\nu /2} \left(b x^v+1\right)^{-n-1} J_{\nu }\left(\sqrt{z^2-2 x z}\right) x^{m+n v+w}dwdx\\
=\frac{1}{2\pi i}\int_{C}\sum _{j=0}^n \sum _{l=0}^j \sum _{p=0}^{\infty }\frac{\pi  (-1)^{-j} a^w \left(-\frac{1}{v}\right)^l
   \binom{j}{l} S_n^{(j)} w^{-k+l-1} J_{p+\nu }(z)  \left(-\frac{m+(n-1) v+p+1}{v}\right)^{j-l}}{v n! p! \sin \left(\frac{\pi  (m+p+w+1)}{v}\right)b^{\frac{m+n v+p+w+1}{v}} }dw
\end{multline}
where $Re(v)>0$.
\begin{proposition}
From equation (\ref{eq:contour_gen_rational1}), equation (5) in \cite{reyn5} and the method in \cite{reyn4}. Series evaluation of a complex-displaced Bessel integral with logarithmic powers and rational denominator.
\begin{multline}
\int_0^{\infty } \frac{x^{m+n v} z^{\nu /2} J_{\nu }\left(\sqrt{-2 x z+z^2}\right) \log ^k(a x)}{(-2
   x+z)^{\nu /2} \left(1+b x^v\right)^{n+1}} \, dx\\
=-\sum _{j=0}^n \sum _{l=0}^j \sum _{p=0}^{\infty }
   \frac{(1+k-l)_l (-1)^{-j} b^{-\frac{1+m+p+n v}{v}} e^{\frac{i (1+m+p) \pi }{v}} (2 \pi )^{1+k-l}
   \left(-\frac{1}{v}\right)^l }{n! p!}\\ \times
\left(\frac{i}{v}\right)^{1+k-l} \left(-\frac{1+m+p+(-1+n) v}{v}\right)^{j-l}
   J_{p+\nu }(z) \binom{j}{l}\\
 \Phi \left(e^{\frac{2 i (1+m+p) \pi }{v}},-k+l,\frac{\pi -i v \log \left(a
   b^{-1/v}\right)}{2 \pi }\right) S_n^{(j)}
\end{multline}
where $0< Re(z) <1,Re(m)<0,|Re(m)|>Re(v)>0,0< Re(\mu) <1$ and there exists a singularity at $x=1/a$ when $Re(a)>0$.
\end{proposition}
\begin{proposition}
From equation (5.7.6.2) in \cite{prud2}. Series evaluation of a displaced Bessel integral with logarithmic powers and rational denominator.
\begin{multline}
\int_0^{\infty } \frac{x^{m+n v} (2 x+z)^{\nu /2} J_{\nu }\left(\sqrt{2 x z+z^2}\right) \log ^k(a x)}{\left(1+b x^v\right)^{n+1} z^{\nu /2}} \, dx\\
=-\sum _{j=0}^n \sum _{l=0}^j \sum
   _{p=0}^{\infty } \frac{(1+k-l)_l i (-1)^{-j} b^{-\frac{1+m+p+n v}{v}} e^{\frac{i (1+m+p) \pi }{v}} (2 \pi )^{1+k-l} \left(-\frac{1}{v}\right)^l \left(\frac{i}{v}\right)^{k-l}
   }{v n! p!}\\ \times
\left(-\frac{1+m+p+(-1+n) v}{v}\right)^{j-l} J_{-p+\nu }(z) \binom{j}{l}\\
 \Phi \left(e^{\frac{2 i (1+m+p) \pi }{v}},-k+l,\frac{v \left(\frac{\pi }{v}-i \log \left(a b^{-1/v}\right)\right)}{2\pi }\right) S_n^{(j)}
\end{multline}
where $Im(a)\neq 0, Re(m)<0, Re(v)>3\pi, Re(z)>0$.
\end{proposition}
\begin{proposition}
From equation (5.7.6.3) in \cite{prud2}. Series evaluation of a branch-cut Bessel integral with logarithmic powers and rational denominator.
\begin{multline}
\int_0^{\infty } \frac{x^{1+m+n v-\nu -\sigma } \left(1-\frac{2 x}{z}\right)^{\frac{1}{2} (-1+\sigma )} J_{1-\sigma }\left(\sqrt{-2 x z+z^2}\right) \log ^k(a x)}{\left(1+b
   x^v\right)^{n+1}} \, dx\\
=\sum _{j=0}^n \sum _{l=0}^j \sum _{p=-\infty }^{\infty } \frac{(1+k-l)_l (-1)^{-j+1} b^{-\frac{1+m+p+n v}{v}} e^{\frac{i (1+m+p) \pi }{v}} (2 \pi )^{1+k-l}
   \left(-\frac{1}{v}\right)^l \left(\frac{i}{v}\right)^{1+k-l} }{n! \Gamma (p+\nu +\sigma )}\\ \times
\left(-\frac{1+m+p+(-1+n) v}{v}\right)^{j-l} J_{p+\nu }(z) \binom{j}{l}\\
 \Phi \left(e^{\frac{2 i (1+m+p) \pi }{v}},-k+l,\frac{\pi
   -i v \log \left(a b^{-1/v}\right)}{2 \pi }\right) S_n^{(j)}
\end{multline}
where $Re(v)>\pi, Re(z)>0$, and $\sigma=0 \vee 1$.
\end{proposition}
\begin{proposition}
From equation (5.7.15.1) in \cite{prud2}. Series evaluation of a Bessel–Log integral with reciprocal argument and rational denominator.
\begin{multline}
\int_0^{\infty } \frac{x^{m+n v} J_0\left(\left(\frac{1}{x}+x\right) z\right) \log ^k(a x)}{\left(1+b x^v\right)^{n+1}} \, dx\\
=\sum _{j=0}^n \sum _{l=0}^j \sum _{p=-\infty }^{\infty } \frac{i
   (-1)^{-j+p+1} b^{-\frac{1+m+2 p+n v}{v}} e^{\frac{i (1+m+2 p) \pi }{v}} (2 \pi )^{1+k-l} \left(-\frac{1}{v}\right)^l \left(\frac{i}{v}\right)^{k-l} }{v n!}\\ \times
\left(-\frac{1+m+2 p+(-1+n) v}{v}\right)^{j-l}
   J_p(z){}^2 \binom{j}{l} (1+k-l)_l\\
 \Phi \left(e^{\frac{2 i (1+m+2 p) \pi }{v}},-k+l,\frac{v \left(\frac{\pi }{v}-i \log \left(a b^{-1/v}\right)\right)}{2 \pi }\right) S_n^{(j)}
\end{multline}
where $Re(v)>\pi, Re(z)>0$.
\end{proposition}
\begin{proposition}
From equation (5.8.3.2) in \cite{prud2}. Series evaluation of a Modified Bessel-yype integral with logarithmic powers and rational generating functions.
\begin{multline}
\int_0^{\infty } \frac{e^{\frac{1}{2} \left(\frac{1}{x}+x\right) z} x^{m+n v} \log ^k(a x)}{\left(1+b x^v\right)^{n+1}} \, dx\\
=-\sum _{j=0}^n \sum _{l=0}^j \sum _{p=-\infty }^{\infty } \frac{i
   (-1)^{-j} b^{-\frac{1+m+p+n v}{v}} e^{\frac{i (1+m+p) \pi }{v}} (2 \pi )^{1+k-l} \left(-\frac{1}{v}\right)^l \left(\frac{i}{v}\right)^{k-l} (1+k-l)_l }{v n!}\\ \times
\left(-\frac{1+m+p+(-1+n) v}{v}\right)^{j-l}
   I_p(z) \binom{j}{l}\\
 \Phi \left(e^{\frac{2 i (1+m+p) \pi }{v}},-k+l,\frac{v \left(\frac{\pi }{v}-i \log \left(a b^{-1/v}\right)\right)}{2 \pi }\right) S_n^{(j)}
\end{multline}
where $Re(v)>\pi,Re(z)<0, Re(m)<0$.
\end{proposition}
\begin{proposition}
From equation (6.9.3.7) in \cite{prud3}. An infinite integral involving Gaussian exponent, Confluent hypergeometric function, logarithmic powers, and rational generating functions.
\begin{multline}
\int_0^{\infty } \frac{e^{-x^2+2 x \beta } x^{m+n v} \left(1+x^2 z\right)^{-\alpha } \, _1F_1\left(\alpha ;\frac{1}{2};\frac{x^2 z (x-\beta )^2}{1+x^2 z}\right) \log ^k(a x)}{\left(1+b
   x^v\right)^{n+1}} \, dx\\
=-\sum _{j=0}^n \sum _{l=0}^j \sum _{p=0}^{\infty } \frac{i (-1)^{-j} b^{-\frac{1+m+p+n v}{v}} e^{\frac{i (1+m+p) \pi }{v}} (2 \pi )^{1+k-l} \left(-\frac{1}{v}\right)^l
   \left(\frac{i}{v}\right)^{k-l} }{v n! p!}\\ \times
\left(-\frac{1+m+p+(-1+n) v}{v}\right)^{j-l} (1+k-l)_l \binom{j}{l} H_p(\beta )\\
 \Phi \left(e^{\frac{2 i (1+m+p) \pi }{v}},-k+l,\frac{v \left(\frac{\pi }{v}-i \log
   \left(a b^{-1/v}\right)\right)}{2 \pi }\right) \, _3F_1\left(\frac{1}{2}-\frac{p}{2},-\frac{p}{2},\alpha ;\frac{1}{2};z\right) S_n^{(j)}
\end{multline}
where $Re(v)>10\pi, Re(\beta)<0,|Re(m)|>1,-1< Re(z)<1$ in order for the series to converge. There exists a singularity at $x=1/a$ when $Re(a)>0$.
\end{proposition}
\begin{proposition}
From equation (5.5.1.1) in \cite{brychkov}. Integral representation involving logarithmic, power, and modified Bessel functions. 
\begin{multline}
\int_0^{\infty } \frac{x^{-\frac{1}{2}+m} K_{\frac{1}{2}+n}\left(\frac{1}{2} \left(z-\sqrt{\frac{z (-2+x
   z)}{x}}\right)\right) K_{\frac{1}{2}+n}\left(\frac{1}{2} \left(z+\sqrt{\frac{z (-2+x z)}{x}}\right)\right) \log
   ^k(a x)}{\left(1+b x^u\right)^{q+1}} \, dx\\
=\sum _{j=0}^q \sum _{p=0}^j \sum _{h=0}^p \sum _{l=0}^n
   \frac{(-1)^{1-j} 2^{1-h+k} b^{-\frac{1+l+m}{u}} e^{\frac{i \pi  (1+l+m+q u)}{u}} m^{-h+p} \pi ^{\frac{3}{2}-h+k}
   \left(-\frac{1}{u}\right)^p \left(\frac{i}{u}\right)^{1-h+k} }{l! (-l+n)! q!}\\ \times
\left(\frac{-1-l+u}{u}\right)^{j-p} (1-h+k)_h
   K_{\frac{1}{2}+l}(z) \binom{j}{p} \binom{p}{h} (l+n)! \\ 
\times \Phi \left(e^{\frac{2 i \pi  (1+l+m+q u)}{u}},h-k,\frac{\pi
   -i u \log \left(a b^{-1/u}\right)}{2 \pi }\right) S_q^{(j)}
\end{multline}
where $Re(u)>n+1, |Re(m)|<1$ and there exists a singularity at $x=1/a$ when $Re(b)>0$ and $x=-i/a$ when $Re(b)<0$.
\end{proposition}
\begin{proposition}
From pp. 181 Exercise number 20 in \cite{andrews}. A Mellin–Barnes representation of the integral involving the square of the Gauss Hypergeometric function and a generalized Beta-type denominator.
\begin{multline}\label{eq:andrews}
\int_0^{\infty } \frac{x^{-1+s} \, _2F_1(a,b;c;x){}^2}{\left(1+d x^u\right)^{p+1}} \, dx
=\sum _{n=0}^{\infty } \frac{(-1)^{-p} d^{-\frac{n+s}{u}} \pi  \binom{-\frac{n+s-p u}{u}}{p}
   \csc \left(\frac{\pi  (n+s-p u)}{u}\right) }{u n! (c)_n (-1+2 c)_n}\\ \times
\, _4F_3\left(\frac{1}{2},\frac{1}{2}+a+b-c,\frac{1}{2}-\frac{n}{2},-\frac{n}{2};\frac{1}{2}+a,\frac{1}{2}+b,\frac{3}{2}-c-n;1\right)\\
\times (2 a)_n
   (2 b)_n \left(-\frac{1}{2}+c\right)_n
\end{multline}
where $Re(p) > Re(u)$.
\end{proposition}
\begin{proposition}
From equation (5.7.1.3) in \cite{brychkov}. A Mellin-type integral involving associated Legendre functions, logarithmic powers, and rational generating function. 
\begin{multline}\label{eq:legendre}
\int_0^{\infty } \frac{x^{m+n v} \left(1+x^2+2 x z\right)^{q/2} P_q\left(\frac{1+x z}{\sqrt{1+x^2+2 x
   z}}\right) \log ^k(a x)}{\left(1+b x^v\right)^{n+1}} \, dx\\
=-\sum _{j=0}^n \sum _{l=0}^j \sum _{p=0}^q
   \frac{(1+k-l)_l (-1)^{-j} b^{-\frac{1+m+p+n v}{v}} e^{\frac{i (1+m+p) \pi }{v}} (2 \pi )^{1+k-l}
   \left(-\frac{1}{v}\right)^l }{n!}\\ \times
\left(\frac{i}{v}\right)^{1+k-l} \left(-\frac{1+m+p+(-1+n) v}{v}\right)^{j-l}
   \binom{j}{l} \binom{q}{p}\\
 \Phi \left(e^{\frac{2 i (1+m+p) \pi }{v}},-k+l,\frac{\pi -i v \log \left(a
   b^{-1/v}\right)}{2 \pi }\right) P_p(z) S_n^{(j)}
\end{multline}
where $Re(v)>0$ and there exists a singularity at $x=1/a$ when $Re(b)>0$.
\end{proposition}
\begin{proposition}
From equation (5.8.1.1) in \cite{brychkov}. A Mellin-type integral involving Legendre polynomials and logarithmic powers.
\begin{multline}
\int_0^{\infty } \frac{x^{m+n v} \left(1+x^2+2 x z\right)^{q/2} T_q\left(\frac{1+x z}{\sqrt{1+x^2+2 x
   z}}\right) \log ^k(a x)}{\left(1+b x^v\right)^{n+1}} \, dx\\
=-\sum _{j=0}^n \sum _{l=0}^j \sum _{p=0}^q
   \frac{(1+k-l)_l (-1)^{-j} b^{-\frac{1+m+p+n v}{v}} e^{\frac{i (1+m+p) \pi }{v}} (2 \pi )^{1+k-l}
   \left(-\frac{1}{v}\right)^l \left(\frac{i}{v}\right)^{1+k-l} }{n!}\\ \times
\left(-\frac{1+m+p+(-1+n) v}{v}\right)^{j-l}
   \binom{j}{l} \binom{q}{p} T_p(z)\\
 \Phi \left(e^{\frac{2 i (1+m+p) \pi }{v}},-k+l,\frac{\pi -i v \log \left(a
   b^{-1/v}\right)}{2 \pi }\right) S_n^{(j)}
\end{multline}
where $Re(v)>0$ and there exists a singularity at $x=1/a$ when $Re(b)>0$.
\end{proposition}
\begin{proposition}
From equation (5.8.4.1) in \cite{brychkov}. A generalized integral representation involving the logarithm of a linear argument, Ultra-Spherical Gegenbauer Polynomials $U_q$, and rational-hypergeometric weight functions.
\begin{multline}
\int_0^{\infty } \frac{x^{m+n v} \left(1+b x^v\right)^{-1-n} \left(1+x^2+2 x z\right)^{q/2} U_q\left(\frac{1+x
   z}{\sqrt{1+x^2+2 x z}}\right) \log ^k(a x)}{1+q} \, dx\\
=-\sum _{j=0}^n \sum _{l=0}^j \sum _{p=0}^q \frac{(1+k-l)_l
   (-1)^{-j} b^{-\frac{1+m+p+n v}{v}} e^{\frac{i (1+m+p) \pi }{v}} (2 \pi )^{1+k-l} \left(-\frac{1}{v}\right)^l
   \left(\frac{i}{v}\right)^{1+k-l} }{(1+p) n!}\\ \times
\left(-\frac{1+m+p+(-1+n) v}{v}\right)^{j-l} \binom{j}{l} \binom{q}{p} U_p(z)\\
 \Phi\left(e^{\frac{2 i (1+m+p) \pi }{v}},-k+l,\frac{\pi -i v \log \left(a b^{-1/v}\right)}{2 \pi }\right)
   S_n^{(j)}
\end{multline}
where $Re(v)>0$ and there exists a singularity at $x=1/a$ when $Re(b)>0$.
\end{proposition}
\begin{proposition}
From equation (5.9.1.1) in \cite{brychkov}. An iIntegral with Hermite polynomials $H_{2q}$, alternating binomial weights, and logarithmic powers in hypergeometric-rational kernels. 
\begin{multline}
\int_0^{\infty } \frac{(-1+x)^q x^{m+n v} \left(1+b x^v\right)^{-1-n} H_{2 q}\left(\sqrt{\frac{x}{-1+x}}
   z\right) \log ^k(a x)}{(2 q)!} \, dx\\
=-\sum _{j=0}^n \sum _{l=0}^j \sum _{p=0}^q \frac{(1+k-l)_l (-1)^{-j}
   b^{-\frac{1+m+p+n v}{v}} e^{\frac{i (1+m+p) \pi }{v}} (2 \pi )^{1+k-l} \left(-\frac{1}{v}\right)^l
   \left(\frac{i}{v}\right)^{1+k-l} }{n!
   (2 p)! (-p+q)!}\\ \times
\left(-\frac{1+m+p+(-1+n) v}{v}\right)^{j-l} \binom{j}{l} H_{2 p}(z) \\
\Phi
   \left(e^{\frac{2 i (1+m+p) \pi }{v}},-k+l,\frac{\pi -i v \log \left(a b^{-1/v}\right)}{2 \pi }\right) S_n^{(j)}
\end{multline}
where $Re(v)>0$ and there exists a singularity at $x=1/a$ when $Re(b)>0$.
\end{proposition}
\begin{proposition}
From equation (5.9.1.9) in \cite{brychkov}. An Integral with odd-degree Hermite $H_{1+2q}$, fractional binomial $(-1+x)^{\frac{1}{2}+q}$, and log-powered hypergeometric-rational weights. The finite series involves Stirling number of the first kind and the Hurwitz-Lerch zeta function.
\begin{multline}
\int_0^{\infty } \frac{(-1+x)^{\frac{1}{2}+q} x^{-\frac{1}{2}+m+n v} \left(1+b x^v\right)^{-1-n} H_{1+2
   q}\left(\sqrt{\frac{x}{-1+x}} z\right) \log ^k(a x)}{(1+2 q)!} \, dx\\
=-\sum _{j=0}^n \sum _{l=0}^j \sum _{p=0}^q
   \frac{(1+k-l)_l (-1)^{-j} b^{-\frac{1+m+p+n v}{v}} e^{\frac{i (1+m+p) \pi }{v}} (2 \pi )^{1+k-l}
   \left(-\frac{1}{v}\right)^l \left(\frac{i}{v}\right)^{1+k-l} }{n! (1+2 p)! (-p+q)!}\\ \times
\left(-\frac{1+m+p+(-1+n) v}{v}\right)^{j-l}
   \binom{j}{l} H_{1+2 p}(z)\\
 \Phi \left(e^{\frac{2 i (1+m+p) \pi }{v}},-k+l,\frac{\pi -i v \log \left(a
   b^{-1/v}\right)}{2 \pi }\right) S_n^{(j)}
\end{multline}
where $Re(v)>0,Re(k)>-1/2, 0< Re(b) < 1$ and there exists a singularity at $x=1/a$ when $Re(b)>0$.
\end{proposition}
\begin{proposition}
From equation (5.10.2.1) in \cite{brychkov}. A Mellin-transform evaluation of the generalized integral involving Laguerre polynomials, logarithmic powers, and Hypergeometric-type kernels. The finite series involves Stirling number of the first kind and the Hurwitz-Lerch zeta function.
\begin{multline}
\int_0^{\infty } \frac{x^{m+n v} (1+x)^q \left(1+b x^v\right)^{-1-n} L_q^{\lambda }\left(\frac{x z}{1+x}\right) \log ^k(a x)}{(1+\lambda )_q} \, dx\\
=-\sum _{j=0}^n \sum _{l=0}^j \sum_{p=0}^q \frac{(1+k-l)_l (-1)^{-j} b^{-\frac{1+m+p+n v}{v}} e^{\frac{i (1+m+p) \pi }{v}} (2 \pi )^{1+k-l} \left(-\frac{1}{v}\right)^l \left(\frac{i}{v}\right)^{1+k-l}
  }{n! (-p+q)! (1+\lambda )_p}\\ \times
 \left(-\frac{1+m+p+(-1+n) v}{v}\right)^{j-l} \binom{j}{l}\\
 \Phi \left(e^{\frac{2 i (1+m+p) \pi }{v}},-k+l,\frac{\pi -i v \log \left(a b^{-1/v}\right)}{2 \pi }\right) L_p^{\lambda }(z)
   S_n^{(j)}
\end{multline}
where $Re(v)>0,Re(k)>-1/2, 0< Re(b) < 1$ and there exists a singularity at $x=1/a$ when $Re(b)>0$.
\end{proposition}
\begin{proposition}
From equation (5.10.3.12). Integral representation of a generalized series Involving Laguerre polynomials, and logarithmic function in terms of the Stirling number of the first and second kind, Hurwitz-Lerch zeta function and Laguerre polynomials. 
\begin{multline}
\int_0^{\infty } \left(\sum _{p=0}^h \frac{(-1)^p (-x)^{-h} x^{m+n v} L_{h+q}^{-h+\lambda }(-p x+z) \log ^k(a
   x)}{(h-p)! p! \left(1+b x^v\right)^{n+1}}\right) \, dx\\
=-\sum _{j=0}^n \sum _{l=0}^j \sum _{p=0}^q \frac{(1+k-l)_l
   (-1)^{-j} b^{-\frac{1+m+p+n v}{v}} e^{\frac{i (1+m+p) \pi }{v}} (2 \pi )^{1+k-l} \left(-\frac{1}{v}\right)^l
   \left(\frac{i}{v}\right)^{1+k-l} }{n! (h+p)!}\\ \times
\left(-\frac{1+m+p+(-1+n) v}{v}\right)^{j-l} \binom{j}{l}\\
 \Phi \left(e^{\frac{2 i
   (1+m+p) \pi }{v}},-k+l,\frac{\pi -i v \log \left(a b^{-1/v}\right)}{2 \pi }\right) L_{-p+q}^{p+\lambda }(z)
   S_n^{(j)} \mathcal{S}_{h+p}^{(h)}
\end{multline}
where $Re(v)>2n>0,Re(k)>-1/2, 0< Re(b) < 1$ and there exists a singularity at $x=1/a$ when $Re(b)>0$.
\end{proposition}
\begin{proposition}
From equation (5.10.3.20) in \cite{brychkov}. Mellin integral of Laguerre polynomials with shifted argument, logarithmic powers, and rational decay on $\mathbb{R}^+$. The finite series involves the Stirling number of the first kind, Laguerre polynomials and Hurwitz-Lerch zeta function.
\begin{multline}
\int_0^{\infty } \frac{x^{m+q+n v} L_q^{-q+\lambda }\left(-\frac{1}{x}+z\right) \log ^k(a x)}{\left(1+b
   x^v\right)^{n+1}} \, dx\\
=-\sum _{j=0}^n \sum _{l=0}^j \sum _{p=0}^q \frac{(1+k-l)_l (-1)^{-j} b^{-\frac{1+m+p+n
   v}{v}} e^{\frac{i (1+m+p) \pi }{v}} (2 \pi )^{1+k-l} \left(-\frac{1}{v}\right)^l \left(\frac{i}{v}\right)^{1+k-l}
   }{n! (-p+q)!}\\ \times
\left(-\frac{1+m+p+(-1+n) v}{v}\right)^{j-l} \binom{j}{l}\\
 \Phi \left(e^{\frac{2 i (1+m+p) \pi }{v}},-k+l,\frac{\pi
   -i v \log \left(a b^{-1/v}\right)}{2 \pi }\right) L_p^{-p+\lambda }(z) S_n^{(j)}
\end{multline}
where $Re(v)>0,Re(k)>-1/2, 0< Re(b) < 1$ and there exists a singularity at $x=1/a$ when $Re(b)>0$.
\end{proposition}
\begin{proposition}
From equation (5.10.3.44) in \cite{brychkov}. Mellin integral of product of shifted-argument Laguerre polynomials with quadratic radical branches, logarithmic weights, and Hypergeometric decay. The finite series involves the Stirling number of the first kind, Laguerre polynomials and Hurwitz-Lerch zeta function.
\begin{multline}
\int_0^{\infty } \frac{(-x)^q x^{m+n v} q! L_q^{-2 q+\lambda }\left(\frac{\sqrt{x} z-\sqrt{4+x z^2}}{2
   \sqrt{x}}\right) L_q^{-2 q+\lambda }\left(\frac{\sqrt{x} z+\sqrt{4+x z^2}}{2 \sqrt{x}}\right) \log ^k(a
   x)}{\left(1+b x^v\right)^{n+1}} \, dx\\
=-\sum _{j=0}^n \sum _{l=0}^j \sum _{p=0}^q \frac{(1+k-l)_l (-1)^{-j}
   b^{-\frac{1+m+p+n v}{v}} e^{\frac{i (1+m+p) \pi }{v}} (2 \pi )^{1+k-l} \left(-\frac{1}{v}\right)^l
   \left(\frac{i}{v}\right)^{1+k-l} }{n! (-p+q)!}\\ \times
\left(-\frac{1+m+p+(-1+n) v}{v}\right)^{j-l} \binom{j}{l}\\
 \Phi \left(e^{\frac{2 i
   (1+m+p) \pi }{v}},-k+l,\frac{\pi -i v \log \left(a b^{-1/v}\right)}{2 \pi }\right) L_p^{-2 p+\lambda }(z)
   (q-\lambda )_p S_n^{(j)}
\end{multline}
where $Re(v)>0,Re(k)>-1/2, 0< Re(b) < 1$ and there exists a singularity at $x=1/a$ when $Re(b)>0$.
\end{proposition}
\begin{proposition}
From equation (5.11.3.5) in \cite{brychkov}. Definite integral representation of a generalized hypergeometric series involving binomial coefficients, Pochhammer symbols, Jacobi polynomials, and logarithmic powers. 
\begin{multline}
\int_0^{\infty } \left(\sum _{p=0}^h \frac{(-1)^p x^{-h+m+n v} \left(1+b x^v\right)^{-1-n} \binom{h}{p}
   C_{h+q}^{(-h+\lambda )}\left(\frac{p x}{2}+z\right) \log ^k(a x)}{h! (1-\lambda )_h}\right) \, dx\\
=-\sum _{j=0}^n\sum _{l=0}^j \sum _{p=0}^q \frac{(1+k-l)_l i (-1)^{-j} b^{-\frac{1+m+p+n v}{v}} e^{\frac{i (1+m+p) \pi }{v}} (2\pi )^{1+k-l} \left(-\frac{1}{v}\right)^l \left(\frac{i}{v}\right)^{k-l} }{v n!(h+p)!}\\ \times
\left(-\frac{1+m+p+(-1+n)v}{v}\right)^{j-l} \binom{j}{l} C_{-p+q}^{(p+\lambda )}(z)\\
 \Phi \left(e^{\frac{2 i (1+m+p) \pi }{v}},-k+l,\frac{\pi-i v \log \left(a b^{-1/v}\right)}{2 \pi }\right) (\lambda )_p S_n^{(j)} \mathcal{S}_{h+p}^{(h)}
\end{multline}
where $Re(v)>2n>0,Re(k)>-1/2, 0< Re(b) < 1$ and there exists a singularity at $x=1/a$ when $Re(b)>0$.
\end{proposition}
\begin{proposition}
From equation (5.11.3.10) in \cite{brychkov}. Definite integral involving Jacobi polynomials with argument $\sqrt{\frac{x}{1+x}}z$, logarithmic powers, and rational weight function.
\begin{multline}
\int_0^{\infty } \frac{x^{m+n v} (1+x)^q \left(1+b x^v\right)^{-1-n} C_{2 q}^{(-q+\lambda
   )}\left(\sqrt{\frac{x}{1+x}} z\right) \log ^k(a x)}{(1-\lambda )_q} \, dx\\
=-\sum _{j=0}^n \sum _{l=0}^j \sum _{p=0}^q
   \frac{(1+k-l)_l (-1)^{-j} b^{-\frac{1+m+p+n v}{v}} e^{\frac{i (1+m+p) \pi }{v}} (2 \pi )^{1+k-l}
   \left(-\frac{1}{v}\right)^l \left(\frac{i}{v}\right)^{1+k-l} }{n! (-p+q)! (1-\lambda )_p}\\ \times
\left(-\frac{1+m+p+(-1+n) v}{v}\right)^{j-l}\binom{j}{l} C_{2 p}^{(-p+\lambda )}(z)\\
 \Phi \left(e^{\frac{2 i (1+m+p) \pi }{v}},-k+l,\frac{\pi -i v \log \left(ab^{-1/v}\right)}{2 \pi }\right) S_n^{(j)}
\end{multline}
where $Re(v)>2n>0,Re(k)>-1/2, 0< Re(b) < 1$ and there exists a singularity at $x=1/a$ when $Re(b)>0$.
\end{proposition}
\begin{proposition}
From equation (5.11.3.23) in \cite{brychkov}.  Definite integral of alternating binomial sum with Gegenbauer polynomial $  C_{2h+2q}^{(-h+\lambda)}\left(\frac{z}{\sqrt{1-px}}\right)  $, logarithmic weight, and rational kernel.
\begin{multline}
\int_0^{\infty } \left(\sum _{p=0}^h \frac{(-1)^p x^{-h+m+n v} (1-p x)^{h+q} \left(1+b x^v\right)^{-1-n}
   \binom{h}{p} C_{2 h+2 q}^{(-h+\lambda )}\left(\frac{z}{\sqrt{1-p x}}\right) \log ^k(a x)}{h! (1-\lambda )_h}\right)
   \, dx\\
=-\sum _{j=0}^n \sum _{l=0}^j \sum _{p=0}^q \frac{(1+k-l)_l (-1)^{-j} b^{-\frac{1+m+p+n v}{v}} e^{\frac{i
   (1+m+p) \pi }{v}} (2 \pi )^{1+k-l} \left(-\frac{1}{v}\right)^l \left(\frac{i}{v}\right)^{1+k-l}
   }{n! (h+p)!}\\ \times
\left(-\frac{1+m+p+(-1+n) v}{v}\right)^{j-l} \binom{j}{l} C_{-2 p+2 q}^{(p+\lambda )}(z)\\
 \Phi \left(e^{\frac{2 i
   (1+m+p) \pi }{v}},-k+l,\frac{\pi -i v \log \left(a b^{-1/v}\right)}{2 \pi }\right) (\lambda )_p S_n^{(j)}
   \mathcal{S}_{h+p}^{(h)}
\end{multline}
where $Re(v)>2n>0,Re(k)>-1/2, 0< Re(b) < 1$ and there exists a singularity at $x=1/a$ when $Re(b)>0$.
\end{proposition}
\begin{proposition}
From equation (5.11.3.31) in \cite{brychkov}. Log-weighted definite integral involving Gegenbauer polynomial $  C_{1+2q}^{(-q+\lambda)}\left(\sqrt{\frac{x}{1+x}}\,z\right)  $ with rational and logarithmic factors.
\begin{multline}
\int_0^{\infty } \frac{x^{-\frac{1}{2}+m+n v} (1+x)^{\frac{1}{2}+q} \left(1+b x^v\right)^{-1-n} C_{1+2
   q}^{(-q+\lambda )}\left(\sqrt{\frac{x}{1+x}} z\right) \log ^k(a x)}{(1-\lambda )_q} \, dx\\
=-\sum _{j=0}^n \sum_{l=0}^j \sum _{p=0}^q \frac{(1+k-l)_l (-1)^{-j} b^{-\frac{1+m+p+n v}{v}} e^{\frac{i (1+m+p) \pi }{v}} (2 \pi)^{1+k-l} \left(-\frac{1}{v}\right)^l \left(\frac{i}{v}\right)^{1+k-l} }{n! (-p+q)! (1-\lambda )_p}\\ \times
\left(-\frac{1+m+p+(-1+n) v}{v}\right)^{j-l} \binom{j}{l} C_{1+2 p}^{(-p+\lambda )}(z)\\
 \Phi \left(e^{\frac{2 i (1+m+p) \pi }{v}},-k+l,\frac{\pi -i v \log\left(a b^{-1/v}\right)}{2 \pi }\right) S_n^{(j)}
\end{multline}
where $Re(v)>2n>0,Re(k)>-1/2, 0< Re(b) < 1$ and there exists a singularity at $x=1/a$ when $Re(b)>0$.
\end{proposition}
\begin{proposition}
From equation (5.11.3.48) in \cite{brychkov}. Definite integral representation of a finite sum involving Gegenbauer polynomials $C_{1+2h+2q}^{(-h+\lambda)}$ with argument $z/\sqrt{1-px}$, binomial expansion, and logarithmic weight.
\begin{multline}
\int_0^{\infty } \left(\sum _{p=0}^h \frac{(-1)^p x^{-h+m+n v} (1-p x)^{\frac{1}{2}+h+q} \binom{h}{p} C_{1+2 h+2 q}^{(-h+\lambda )}\left(\frac{z}{\sqrt{1-p x}}\right) \log ^k(a x)}{\left(1+b
   x^v\right)^{n+1} h!
   (1-\lambda )_h}\right) \, dx\\
=-\sum _{j=0}^n \sum _{l=0}^j \sum _{p=0}^q \frac{(1+k-l)_l (-1)^{-j} b^{-\frac{1+m+p+n
   v}{v}} e^{\frac{i (1+m+p) \pi }{v}} (2 \pi )^{1+k-l} \left(-\frac{1}{v}\right)^l \left(\frac{i}{v}\right)^{1+k-l}
  }{n! (h+p)!}\\ \times
 \left(-\frac{1+m+p+(-1+n) v}{v}\right)^{j-l} \binom{j}{l} C_{1-2 p+2 q}^{(p+\lambda )}(z)\\
 \Phi \left(e^{\frac{2 i
   (1+m+p) \pi }{v}},-k+l,\frac{\pi -i v \log \left(a b^{-1/v}\right)}{2 \pi }\right) (\lambda )_p S_n^{(j)}
   \mathcal{S}_{h+p}^{(h)}
\end{multline}
where $Re(v)>2n>0,Re(k)>-1/2, 0< Re(b) < 1$ and there exists a singularity at $x=1/a$ when $Re(b)>0$.
\end{proposition}
\begin{proposition}
From equation (5.12.2.21) in \cite{brychkov}. Definite integral of a finite alternating sum involving associated Jacobi polynomials $P_{h+q}^{(-h+\rho,-h+\sigma)}$, binomial coefficients, and logarithmic powers.
\begin{multline}
\int_0^{\infty } \left(\sum _{p=0}^h \frac{(-1)^p x^{-h+m+n v} \left(1+b x^v\right)^{-1-n} \binom{h}{p}
   P_{h+q}^{(-h+\rho ,-h+\sigma )}(2 p x+z) \log ^k(a x)}{h! (-q-\rho -\sigma )_h}\right) \, dx\\
=-\sum _{j=0}^n \sum_{l=0}^j \sum _{p=0}^q \frac{(1+k-l)_l (-1)^{-j} b^{-\frac{1+m+p+n v}{v}} e^{\frac{i (1+m+p) \pi }{v}} (2 \pi)^{1+k-l} \left(-\frac{1}{v}\right)^l \left(\frac{i}{v}\right)^{1+k-l} }{n!(h+p)!}\\ \times
\left(-\frac{1+m+p+(-1+n) v}{v}\right)^{j-l}\binom{j}{l}\\
 \Phi \left(e^{\frac{2 i (1+m+p) \pi }{v}},-k+l,\frac{\pi -i v \log \left(a b^{-1/v}\right)}{2 \pi}\right) P_{-p+q}^{(p+\rho ,p+\sigma )}(z) (1+q+\rho +\sigma )_p S_n^{(j)} \mathcal{S}_{h+p}^{(h)}
\end{multline}
where $Re(v)>2n>0,Re(k)>-1/2, 0< Re(b) < 1$ and there exists a singularity at $x=1/a$ when $Re(b)>0$.
\end{proposition}
\begin{proposition}
From equation (4.5.1.6 ) in \cite{prud2}. Errata. There should be a factor $(-1)^{\sigma}$ on the right-hand side. Mellin-type integral of Hermite-modulated hypergeometric kernel with logarithmic weight and generalized Beta-type decay. 
\begin{multline}
\int_0^{\infty } \frac{(-1+x)^q \left(\frac{-1+x}{x}\right)^{\sigma /2} x^{m+n v} \left(1+b x^v\right)^{-1-n} H_{2 q+\sigma }\left(\sqrt{\frac{x}{-1+x}} z\right) \log ^k(a x)}{(2
   q+\sigma )!} \, dx\\
=-\sum _{j=0}^n \sum _{l=0}^j \sum _{p=0}^q \frac{(-1)^{-j} b^{-\frac{1+m+p+n v}{v}} e^{\frac{i (1+m+p) \pi }{v}} (1+k-l)_l (2 \pi )^{1+k-l}
   \left(-\frac{1}{v}\right)^l }{n! (-p+q)! (2 p+\sigma )!}\\ \times
\left(\frac{i}{v}\right)^{1+k-l} \left(-\frac{1+m+p+(-1+n) v}{v}\right)^{j-l} \binom{j}{l}\\
 H_{2 p+\sigma }(z) \Phi \left(e^{\frac{2 i (1+m+p) \pi
   }{v}},-k+l,\frac{\pi -i v \log \left(a b^{-1/v}\right)}{2 \pi }\right) S_n^{(j)}
\end{multline}
where $Re(v)>2n>0,Re(k)>-1/2, 0< Re(b) < 1$ and there exists a singularity at $x=1/a$ when $Re(b)>0$. In the final infinite integral formula the variable $\sigma$ is replaced by $(-1)^{2\sigma}=1$ when $\sigma=0$ or $\sigma=1$.
\end{proposition}
\begin{proposition}
From equation (4.5.1.7) in \cite{prud2}. Definite integral involving power-law, Hypergeometric, Hermite polynomial, and logarithmic terms. 
\begin{multline}
\int_0^{\infty } \frac{x^{m+q+n v} H_q\left(\frac{1}{2 x}+z\right) \log ^k(a x)}{\left(1+b x^v\right)^{n+1}} \,
   dx\\
=-\sum _{j=0}^n \sum _{l=0}^j \sum _{p=0}^q \frac{(1+k-l)_l i (-1)^{-j} b^{-\frac{1+m+p+n v}{v}} e^{\frac{i
   (1+m+p) \pi }{v}} (2 \pi )^{1+k-l} \left(-\frac{1}{v}\right)^l \left(\frac{i}{v}\right)^{k-l}
  }{vn!}\\ \times
 \left(-\frac{1+m+p+(-1+n) v}{v}\right)^{j-l} \binom{j}{l} \binom{q}{p} H_p(z)\\
 \Phi \left(e^{\frac{2 i (1+m+p) \pi}{v}},-k+l,\frac{v \left(\frac{\pi }{v}-i \log \left(a b^{-1/v}\right)\right)}{2 \pi }\right) S_n^{(j)}
\end{multline}
where $Re(v)>2n>0,Re(k)>-1/2, 0< Re(b) < 1$ and there exists a singularity at $x=1/a$ when $Re(b)>0$.
\end{proposition}
\begin{proposition}
From equation (4.7.1.3 ) in \cite{prud2}. Definite integral of Jacobi polynomial product with Hypergeometric weight, logarithmic power, and quadratic radical arguments. 
\begin{multline}
\int_0^{\infty } \frac{(-1)^q x^{m+n v} q! P_q^{(\alpha ,\beta )}\left(-x-\sqrt{1+x^2+2 x z}\right) P_q^{(\alpha ,\beta )}\left(-x+\sqrt{1+x^2+2 x
   z}\right) \log ^k(a x)}{(1+\alpha )_q (1+\beta )_q \left(1+b x^v\right)^{n+1} } \, dx\\
=\sum _{j=0}^n \sum _{l=0}^j \sum _{p=0}^q \frac{(-1)^{-j} (1+k-l)_l b^{-\frac{1+m+p+n v}{v}} e^{\frac{i (1+m+p) \pi }{v}} (2
   \pi )^{1+k-l} \left(-\frac{1}{v}\right)^l \left(\frac{i}{v}\right)^{1+k-l} }{n! (-p+q)! (1+\alpha )_p (1+\beta )_p}\\ \times
\left(-\frac{1+m+p+(-1+n) v}{v}\right)^{j-l} \binom{j}{l} \\
\Phi \left(e^{\frac{2 i (1+m+p) \pi
   }{v}},-k+l,\frac{\pi -i v \log \left(a b^{-1/v}\right)}{2 \pi }\right) P_p^{(\alpha ,\beta )}(z) (1+q+\alpha +\beta )_p S_n^{(j)}
\end{multline}
where $Re(v)>2n>0,Re(k)>-1/2, 0< Re(b) < 1$ and there exists a singularity at $x=1/a$ when $Re(b)>0$.
\end{proposition}
\begin{proposition}
From equation (4.7.1.4) in \cite{prud2}. Integral involving Jacobi Polynomials and logarithmic functions over an infinite interval.
\begin{multline}
\int_0^{\infty } \frac{x^{m+n v} \left(1+b x^v\right)^{-1-n} \left(1+\frac{1}{2} (x+x z)\right)^q P_q^{(\alpha
   ,-1-q-\alpha -\beta )}\left(\frac{2+3 x-x z}{2+x+x z}\right) \log ^k(a x)}{(1+\alpha )_q} \, dx\\
=-\sum _{j=0}^n \sum
   _{l=0}^j \sum _{p=0}^q \frac{(-1)^{-j} (1+k-l)_l b^{-\frac{1+m+p+n v}{v}} e^{\frac{i (1+m+p) \pi }{v}} (2 \pi
   )^{1+k-l} \left(-\frac{1}{v}\right)^l \left(\frac{i}{v}\right)^{1+k-l} }{n! (-p+q)! (1+\alpha )_p}\\ \times
\left(-\frac{1+m+p+(-1+n) v}{v}\right)^{j-l}
   \binom{j}{l}\\
 \Phi \left(e^{\frac{2 i (1+m+p) \pi }{v}},-k+l,\frac{\pi -i v \log \left(a b^{-1/v}\right)}{2 \pi
   }\right) P_p^{(\alpha ,-p+\beta )}(z) S_n^{(j)}
\end{multline}
where $Re(v)>2n>0,Re(k)>-1/2, 0< Re(b) < 1$ and there exists a singularity at $x=1/a$ when $Re(b)>0$.
\end{proposition}
\begin{proposition}
From equation (5.1.1.1) in \cite{prud3}. Mellin-type Integral of Bernoulli polynomial kernels with power-logarithmic factors.
\begin{multline}
\int_0^{\infty } \frac{x^{h+m+n v} \left(B_h\left(\frac{1}{x}\right)+(-1)^q \left(\int_0^1
   B_{h-q}\left(\frac{1}{x}+\tau \right) B_q(\tau ) \, d\tau \right) \binom{h}{q}\right) \log ^k(a x)}{\left(1+b x^v\right)^{n+1}} \, dx\\
=-\sum _{j=0}^n \sum _{l=0}^j \sum _{p=0}^q \frac{(1+k-l)_l (-1)^{-j} b^{-\frac{1+m+p+n
   v}{v}} e^{\frac{i (1+m+p) \pi }{v}} (2 \pi )^{1+k-l} \left(-\frac{1}{v}\right)^l \left(\frac{i}{v}\right)^{1+k-l}}{n!}\\ \times
 \left(-\frac{1+m+p+(-1+n) v}{v}\right)^{j-l} B_p \binom{h}{p} \binom{j}{l}\\
 \Phi \left(e^{\frac{2 i (1+m+p) \pi }{v}},-k+l,\frac{\pi -i v \log \left(a b^{-1/v}\right)}{2 \pi }\right) S_n^{(j)}
\end{multline}
where $Re(v)>2n>0,Re(k)>-1/2, 0< Re(b) < 1$ and there exists a singularity at $x=1/a$ when $Re(b)>0,h>q$.
\end{proposition}
\begin{proposition}
From equation (5.1.1.2) in \cite{prud3}. A generalized Mellin-type integral with logarithmic powers and Bernoulli polynomial trigonometric expansion.
\begin{multline}
\int_0^{\infty } x^{m+n v} \left(1+b x^v\right)^{-1-n} \left(\frac{1}{2} x \cot
   \left(\frac{x}{2}\right)\right. \\ \left.
+\frac{(-1)^q x^{1+2 q} \left(\int_0^1 B_{2 q}(\tau ) \cos \left(x \left(-\frac{1}{2}+\tau
   \right)\right) \, d\tau \right) \csc \left(\frac{x}{2}\right)}{2 (2 q)!}\right) \log ^k(a x) \, dx\\
=-\sum _{j=0}^n
   \sum _{l=0}^j \sum _{p=0}^q \frac{(1+k-l)_l i (-1)^{-j+p} b^{-\frac{1+m+2 p+n v}{v}} e^{\frac{i (1+m+2 p) \pi}{v}} (2 \pi )^{1+k-l} \left(-\frac{1}{v}\right)^l \left(\frac{i}{v}\right)^{k-l} }{v n! (2 p)!}\\ \times
\left(-\frac{1+m+2 p+(-1+n)
   v}{v}\right)^{j-l} B_{2 p} \binom{j}{l}\\
 \Phi \left(e^{\frac{2 i (1+m+2 p) \pi }{v}},-k+l,\frac{v \left(\frac{\pi
   }{v}-i \log \left(a b^{-1/v}\right)\right)}{2 \pi }\right) S_n^{(j)}
\end{multline}
where $Re(v)>2n>0,Re(k)>-1/2, 0< Re(b) < 1$ and there exists a singularity at $x=1/a$ when $Re(b)>0$.
\end{proposition}
\begin{proposition}
From equation (5.1.2.7(i)) in \cite{prud3}. Errata. The Binomial coefficient on the right-hand side should be $\binom{m}{k}$. Infinite integral of alternating Bernoulli-Binomial sums with reciprocal argument, logarithmic powers, and rational decay.
\begin{multline}
\int_0^{\infty } \left(\sum _{g=0}^h \frac{(-1)^h (-x)^g x^{-h+m+q+n v} B_{g+q}\left(\frac{1}{x}+z\right)
   \binom{h}{g} \log ^k(a x)}{\left(1+b x^v\right)^{n+1}}\right) \, dx\\
=-\sum _{j=0}^n \sum _{l=0}^j \sum _{p=0}^q
   \frac{(1+k-l)_l (-1)^{-j} b^{-\frac{1+m+p+n v}{v}} e^{\frac{i (1+m+p) \pi }{v}} (2 \pi )^{1+k-l}
   \left(-\frac{1}{v}\right)^l \left(\frac{i}{v}\right)^{1+k-l} }{n!}\\ \times
\left(-\frac{1+m+p+(-1+n) v}{v}\right)^{j-l}
   B_{h+p}(z) \binom{j}{l} \binom{q}{p}\\
 \Phi \left(e^{\frac{2 i (1+m+p) \pi }{v}},-k+l,\frac{\pi -i v \log \left(a
   b^{-1/v}\right)}{2 \pi }\right) S_n^{(j)}
\end{multline}
where $Re(v)>2n>0,Re(k)>-1/2, 0< Re(b) < 1$ and there exists a singularity at $x=1/a$ when $Re(b)>0,q>h$.
\end{proposition}
\begin{proposition}
From equation (5.1.2.7(ii)) in \cite{prud3}. Errata. When the coefficient of $t$ is negative on the right-hand side of the series, the equation should be 
\begin{equation}
\sum _{k=0}^n \binom{n}{k} (-t)^k B_{k+m}(x)=t^{n-m} \sum _{k=0}^m t^k \binom{m}{k}
   B_{k+n}\left(x-\frac{1}{t}\right)
  \end{equation}. Generalized integral representation involving finite Bernoulli sums, binomials, logarithmic powers, and rational decay.
\begin{multline}
\int_0^{\infty } \left(\sum _{g=0}^h \frac{x^{g-h+m+q+n v} B_{g+q}\left(-\frac{1}{x}+z\right) \binom{h}{g} \log^k(a x)}{\left(1+b x^v\right)^{n+1}}\right) \, dx\\
=\sum _{j=0}^n \sum _{l=0}^j \sum _{p=0}^q \frac{(1+k-l)_l(-1)^{-j+p+q+1} b^{-\frac{1+m+p+n v}{v}} e^{\frac{i (1+m+p) \pi }{v}} (2 \pi )^{1+k-l} \left(-\frac{1}{v}\right)^l\left(\frac{i}{v}\right)^{1+k-l} }{n!}\\ \times
\left(-\frac{1+m+p+(-1+n) v}{v}\right)^{j-l} B_{h+p}(z) \binom{j}{l} \binom{q}{p}\\
\Phi \left(e^{\frac{2 i (1+m+p) \pi }{v}},-k+l,\frac{\pi -i v \log \left(a b^{-1/v}\right)}{2 \pi }\right)
   S_n^{(j)}
\end{multline}
where $Re(v)>2n>0,Re(k)>-1/2, 0< Re(b) < 1$ and there exists a singularity at $x=1/a$ when $Re(b)>0,q>h$.
\end{proposition}
\begin{proposition}
From equation (5.1.2.9) in \cite{prud3}. A closed-form integral involving Bernoulli polynomials, binomial expansion, and logarithmic powers over a generalized Beta kernel.
\begin{multline}
\int_0^{\infty } \left(\sum _{g=0}^h \frac{(-1)^q x^{g-h+m+q+n v}
   \left(B_{g+q}\left(-\frac{1}{x}+z\right)+(-1)^{g-h+q} B_{g+q}\left(\frac{1}{x}+z\right)\right) \binom{h}{g} \log
   ^k(a x)}{\left(1+b x^v\right)^{n+1}}\right) \, dx\\
=-\sum _{j=0}^n \sum _{l=0}^j \sum _{p=0}^q \frac{\left(2
   (1+k-l)_l\right) (-1)^{-j} b^{-\frac{1+m+2 p+n v}{v}} e^{\frac{i (1+m+2 p) \pi }{v}} (2 \pi )^{1+k-l}
   \left(-\frac{1}{v}\right)^l \left(\frac{i}{v}\right)^{1+k-l} }{n!}\\ \times
\left(-\frac{1+m+2 p+(-1+n) v}{v}\right)^{j-l} B_{h+2
   p}(z) \binom{j}{l} \binom{q}{2 p}\\
 \Phi \left(e^{\frac{2 i (1+m+2 p) \pi }{v}},-k+l,\frac{\pi -i v \log \left(a
   b^{-1/v}\right)}{2 \pi }\right) S_n^{(j)}
\end{multline}
where $Re(v)>2n>0,Re(k)>-1/2, 0< Re(b) < 1$ and there exists a singularity at $x=1/a$ when $Re(b)>0,q>h$.
\end{proposition}
%
%
\begin{proposition}
From equation (5.1.2.14) in \cite{prud3}. A closed-form integral representation involving Bernoulli polynomial convolutions, binomial coefficients, and logarithmic powers over a generalized Beta-type denominator.
\begin{multline}
\int_0^{\infty } \left(\sum _{p=0}^q \frac{x^{m+p+n v} B_p\left(z+\frac{s-\alpha }{x}\right) B_{-p+q}(\alpha )\binom{q}{p} \log ^k(a x)}{\left(1+b x^v\right)^{n+1}}\right) \, dx\\
=-\sum _{j=0}^n \sum _{l=0}^j \sum _{p=0}^q\frac{(1+k-l)_l (-1)^{-j} b^{-\frac{1+m+p+n v}{v}} e^{\frac{i (1+m+p) \pi }{v}} (2 \pi )^{1+k-l}
   \left(-\frac{1}{v}\right)^l \left(\frac{i}{v}\right)^{1+k-l} }{n!}\\ \times
\left(-\frac{1+m+p+(-1+n) v}{v}\right)^{j-l} B_p(z)
   B_{-p+q}(s) \binom{j}{l} \binom{q}{p} \\
\Phi \left(e^{\frac{2 i (1+m+p) \pi }{v}},-k+l,\frac{\pi -i v \log \left(a
   b^{-1/v}\right)}{2 \pi }\right) S_n^{(j)}
\end{multline}
where $Re(v)>2n>0,Re(k)>-1/2, 0< Re(b) < 1$ and there exists a singularity at $x=1/a$ when $Re(b)>0$.
\end{proposition}
\begin{proposition}
From equation (5.1.4.1) in \cite{prud3}. A closed-form integral representation with Euler Polynomial summation, alternating binomial coefficients, and logarithmic powers over a generalized Beta-Type denominator.
\begin{multline}
\int_0^{\infty } \left(\sum _{g=0}^h \frac{(-1)^{g+h} x^{g-h+m+q+n v} \binom{h}{g} E_{g+q}\left(\frac{1}{x}+z\right) \log ^k(a x)}{\left(1+b x^v\right)^{n+1}}\right) \, dx\\
=-\sum
   _{j=0}^n \sum _{l=0}^j \sum _{p=0}^q \frac{i (1+k-l)_l (-1)^{-j} b^{-\frac{1+m+p+n v}{v}} e^{\frac{i (1+m+p) \pi }{v}} (2 \pi )^{1+k-l} \left(-\frac{1}{v}\right)^l
   }{v n!}\\ \times
\left(\frac{i}{v}\right)^{k-l} \left(-\frac{1+m+p+(-1+n) v}{v}\right)^{j-l} \binom{j}{l} \binom{q}{p} E_{h+p}(z) \\
\Phi \left(e^{\frac{2 i (1+m+p) \pi }{v}},-k+l,\frac{v \left(\frac{\pi
   }{v}-i \log \left(a b^{-1/v}\right)\right)}{2 \pi }\right) S_n^{(j)}
\end{multline}
where $Re(v)>2n>0,Re(k)>-1/2, 0< Re(b) < 1$ and there exists a singularity at $x=1/a$ when $Re(b)>0$.
\end{proposition}
\begin{proposition}
From equation (5.1.4.6) in \cite{prud3}. A closed-form integral involving Euler Polynomial convolution, binomial expansion, and logarithmic powers over a generalized Beta kernel.
\begin{multline}
\int_0^{\infty } \left(\sum _{p=0}^q \frac{x^{m+p+n v} \binom{q}{p} E_p\left(z+\frac{s-\alpha }{x}\right) E_{-p+q}(\alpha ) \log ^k(a x)}{\left(1+b x^v\right)^{n+1}}\right) \,
   dx\\
=-\sum _{j=0}^n \sum _{l=0}^j \sum _{p=0}^q \frac{(1+k-l)_l \left(i (-1)^{-j} b^{-\frac{1+m+p+n v}{v}} e^{\frac{i (1+m+p) \pi }{v}} (2 \pi )^{1+k-l} \left(-\frac{1}{v}\right)^l
   \left(\frac{i}{v}\right)^{k-l} \right)}{v n!}\\ \times
\left(-\frac{1+m+p+(-1+n) v}{v}\right)^{j-l} \binom{j}{l} \binom{q}{p} E_p(z) E_{-p+q}(s) \\
\Phi \left(e^{\frac{2 i (1+m+p) \pi }{v}},-k+l,\frac{v
   \left(\frac{\pi }{v}-i \log \left(a b^{-1/v}\right)\right)}{2 \pi }\right) S_n^{(j)}
\end{multline}
where $Re(v)>2n>0,Re(k)>-1/2, 0< Re(b) < 1$ and there exists a singularity at $x=1/a$ when $Re(b)>0$.
\end{proposition}
%
%
%
\section{Special cases and evaluations of Propositions in terms of special functions and fundamental constants}
\begin{example}
A generalized integral involving Legendre functions of general order and logarithmic denominator. In this example we use equation (\ref{eq:legendre}) and replace $m\to m-n v$ then form a second equation replacing $m \to s$ and taking their difference. Next we set $k \to -1, a\to e^{a}$ then form another equation by replacing $a\to -a$ and taking their difference and simplify.
\begin{multline}\label{eq:legendre1}
\int_0^{\infty } \frac{\left(x^m-x^s\right) \left(1+b x^v\right)^{-1-n} \left(1+x^2+2 x z\right)^{q/2} P_q\left(\frac{1+x z}{\sqrt{1+x^2+2 x z}}\right)}{a^2-\log ^2(x)} \, dx\\
=\sum _{j=0}^n \sum _{l=0}^j
   \sum _{p=0}^q \frac{(-1)^{-j+l} 2^{-1-l} b^{-\frac{1+m+p+s}{v}} \pi ^{-l} \left(-\frac{1}{v}\right)^l \left(\frac{i}{v}\right)^{-l} }{a n!}\\ \times
 \left(-\frac{1+m+p-v}{v}\right)^{-l} \left(-\frac{1+p+s-v}{v}\right)^{-l}
   \binom{j}{l} \binom{q}{p} l!\\
   \left(b^{s/v} e^{\frac{i \pi  (1+m+p-n v)}{v}} \left(-\frac{1+m+p-v}{v}\right)^j \left(-\frac{1+p+s-v}{v}\right)^l \right. \\ \left.
\Phi \left(e^{\frac{2 i \pi  (1+m+p-n v)}{v}},1+l,\frac{\pi +i
   v \left(a-\log \left(b^{-1/v}\right)\right)}{2 \pi }\right)\right. \\ \left.
-b^{s/v} e^{\frac{i \pi  (1+m+p-n v)}{v}} \left(-\frac{1+m+p-v}{v}\right)^j \left(-\frac{1+p+s-v}{v}\right)^l \right. \\ \left.
\Phi \left(e^{\frac{2 i \pi  (1+m+p-nv)}{v}},1+l,\frac{\pi -i v \left(a+\log \left(b^{-1/v}\right)\right)}{2 \pi }\right)\right. \\ \left.
+b^{m/v} e^{\frac{i \pi  (1+p+s-n v)}{v}} \left(-\frac{1+m+p-v}{v}\right)^l \left(-\frac{1+p+s-v}{v}\right)^j \right. \\ \left.
\left(-\Phi\left(e^{\frac{2 i \pi  (1+p+s-n v)}{v}},1+l,\frac{\pi +i v \left(a-\log \left(b^{-1/v}\right)\right)}{2 \pi }\right) \right.\right. \\ \left.\left.
+\Phi \left(e^{\frac{2 i \pi  (1+p+s-n v)}{v}},1+l,\frac{\pi -i v \left(a+\log
   \left(b^{-1/v}\right)\right)}{2 \pi }\right)\right)\right) P_p(z) S_n^{(j)}
\end{multline}
where $Re(v)>2n>0,Re(a)>0$ and there exists a singularity at $x=ai$.
\end{example}
\begin{example}
Closed-form evaluation of a logarithmically resonant integral involving reciprocal $\pi^2-(2 \log(x))^2$. In this example we use equation (\ref{eq:legendre1}) and set $a\to \pi/2, b \to 1, v \to 2, n \to 1, q \to 1, z \to 1/2, m \to 1/2,s \to  -1/2$ and simplify.
\begin{multline}
\int_0^{\infty } \frac{(-1+x) (2+x)}{\sqrt{x} \left(1+x^2\right)^2 \left(\pi ^2-4 \log ^2(x)\right)} \, dx\\
=\frac{1}{8 \sqrt{2} \pi ^2}(-1-i) \left(\Phi \left(-i,2,\frac{1}{2}-\frac{i}{2}\right)-\Phi
   \left(-i,2,\frac{1}{2}+\frac{i}{2}\right)\right. \\ \left.
+3 i \left(\Phi \left(i,2,\frac{1}{2}-\frac{i}{2}\right)-\Phi \left(i,2,\frac{1}{2}+\frac{i}{2}\right)\right)\right)+\pi  \left(\,
   _2F_1\left(\frac{1}{2}-\frac{i}{2},1;\frac{3}{2}-\frac{i}{2};-i\right)\right. \\ \left.
+i \left(5 \, _2F_1\left(\frac{1}{2}-\frac{i}{2},1;\frac{3}{2}-\frac{i}{2};i\right)+\,
   _2F_1\left(\frac{1}{2}+\frac{i}{2},1;\frac{3}{2}+\frac{i}{2};-i\right)\right.\right. \\ \left.\left.
+5 i \, _2F_1\left(\frac{1}{2}+\frac{i}{2},1;\frac{3}{2}+\frac{i}{2};i\right)\right)\right)
\end{multline}
where there exists a singularity at $x=i e^{-\frac{\pi ^2}{4}}$.
\end{example}
\section{Discussion}
In this article we derived and evaluated several infinite integrals in terms of series involving special functions. Some of the work conducted involved evaluating integrals with singularities and Cauchy principal values. We also extended a few integral forms in terms of series which are listed in known book volumes.
\section{Conclusion}
Based on the work produced in this article we will use the contour integral method cited in this article to study other integral forms. We plan to evaluate other kernels involving special functions. The results in this work were checked using Mathematica by Wolfram. This software was also used to assist with formatting the mathematical equations into LaTeX.
\end{document}